\documentclass[graybox]{SNmult}

\usepackage{type1cm}        % activate if the above 3 fonts are
\usepackage{makeidx}         % allows index generation
\usepackage{graphicx}        % standard LaTeX graphics tool
\usepackage{multicol}        % used for the two-column index
\usepackage[bottom]{footmisc}% places footnotes at page bottom

\usepackage{newtxtext}       % 
\usepackage[varvw]{newtxmath}       % selects Times Roman as basic font

\makeindex             % used for the subject index
\usepackage{hyperref}

\usepackage{tikz}

\usepackage{pgfplots}
\pgfplotsset{compat=1.18}

\newcommand\independent{\protect\mathpalette{\protect\independenT}{\perp}}
\def\independenT#1#2{\mathrel{\rlap{$#1#2$}\mkern2mu{#1#2}}}

\DeclareMathOperator{\pa}{pa}
\DeclareMathOperator{\ch}{ch}
\DeclareMathOperator{\an}{an}
\DeclareMathOperator{\de}{de}
\DeclareMathOperator{\nd}{nd}
\DeclareMathOperator{\nbd}{ne}
\DeclareMathOperator{\cnbd}{\overline{ne}}
\DeclareMathOperator{\CI}{CI}

\begin{document}
\motto{For my son, Declan, whom I can't wait to meet.}
\title*{Algebraic Implicitization Techniques in the Graphical Models Program}
% Use \titlerunning{Short Title} for an abbreviated version of
% your contribution title if the original one is too long
\author{Liam Solus}
% Use \authorrunning{Short Title} for an abbreviated version of
% your contribution title if the original one is too long
\institute{Liam Solus \at KTH Royal Institute of Technology, Stockholm, Sweden \email{solus@kth.se}\\ This is a preprint of a chapter for publication in a Springer Lecture Notes in Mathematics Series.}
%
% Use the package "url.sty" to avoid
% problems with special characters
% used in your e-mail or web address
%
\maketitle
\abstract*{Much of our daily lives now rely on technology driven by inference techniques for analyzing multivariate data. 
Graphical models are a general framework for deriving these techniques.
However, different data types require different graphical models, each with their own basic mathematical theory that helps us obtain the desired statistical inference methods. 
Developing the basic theory for a family of graphical models requires solutions to fundamental questions, where emerging mathematical techniques rely on algebraic and geometric methods.  
These notes provide an introduction to the theory of graphical models and the algebraic implicitization techniques now being used to provide the basic mathematical theory for families of graphical models.
\keywords{graphical model $\cdot$ algebraic statistics}}

\abstract{
Many aspects of daily life now rely on technology driven by advanced techniques for analyzing multivariate data. 
Graphical models provide the general framework within which much of these analyses take place.
However, different data types require different graphical models, with each requiring its own basic mathematical theory to guide sound statistical inference. 
Developing this theory for a family of graphical models requires solutions to fundamental questions, where emerging mathematical approaches are utilizing what can collectively be called algebraic implicitization techniques.  
These notes provide an introduction to the basics of graphical models and the algebraic implicitization techniques now appearing in instances of the graphical models program. 
\keywords{graphical model $\cdot$ algebraic statistics}}

%---Introduction
\section{Introduction}
\label{sec: intro}
Following technological breakthroughs of the early 21st century, societal advancements are now largely driven by sophisticated methods for making inferences based on multivariate data. 
For example, in the early 2000s, the internet transformed into a searchable marketplace where users could easily locate homepages, create user profiles, interact, and even generate content of their own. 
This, in turn, afforded companies a new capability to collect data.  
For a single user visit to their webpage (or app), a company could record the time of the visit, the location in the world of the user, what products they viewed, what they `liked', and whether or not they clicked through to a checkout.  
This list of possible measurements is, in fact, conservative in its length relative to the actual number of metrics measured at each user interaction. 
Consequently, many companies now control massive data sets consisting of multivariate observations that carry valuable information about user engagement and interactions. 

More generally, large multivariate data sets are now ubiquitous in most scientific, commerical and industrial disciplines. 
The individual components (or metrics) being measured in these multivariate observations naturally follow a complex dependence structure that describes how the values of one metric in the joint observation relate to the others. 
Gaining insights into the dependence structure underlying the generation of this multivariate data is paramount to making inferences, and the dependence structure itself can help inform how to efficiently and accurately make these inferences. 

Graphical models provide a flexible framework for these tasks. 
The principle of graphical models is simple: Suppose that our multivariate data consists of joint samples from a collection of random variables $X = (X_i)_{i\in V}$.  
Given a graph $\mathcal G = (V,E)$ with node set $V$, each variable $X_i$ is combinatorially represented by a node $i\in V$.  
The edges $E$ encode stochastic dependencies between the individual variables, and hence the graph becomes a compact combinatorial representation of the dependence structure underlying the multivariate data. 

A key advantage of graphical models is the flexibility the framework is afforded by having such a simple guiding principle.  
While the edges of $\mathcal G$ are encoding stochastic dependencies, the user is free to choose precisely \emph{how} the edges represent these dependencies, as well as a parameterization that best fits the specific type of data.  
Once these choices are made, we obtain a set of distributions $\mathcal M (\mathcal G)$, called the \emph{graphical model} for $\mathcal G$. 
The model $\mathcal M(\mathcal G)$ is then a collection of candidate distributions for the unknown distribution of $X = (X_i)_{i\in V}$ where each distribution in the set generates samples in a well-defined manner according to the dependence structure represented by $\mathcal G$. 

\begin{example}
    \label{ex: markov chain}
A classic example is the Markov chain on $X =(X_i)_{i=1}^m$.
The Markov property $X_i \independent X_{\{1,\ldots, i-2\}} \mid X_{i-1}$ for $i = 1,\ldots, m-1$ is visually encoded in the graph
\begin{center}
\begin{tikzpicture}[thick, scale=0.6]
  % \tikzset{>={Stealth[length=3mm, round]}}
  \node[circle, draw, inner sep=1pt, minimum width=2pt] (1) at (0,0)  {$1$};
  \node[circle, draw, inner sep=1pt, minimum width=2pt] (2) at (2,0) {$2$};
  \node[circle, inner sep=1pt, minimum width=1pt] (3) at (4,0) {$\cdots$};
  \node[circle, draw, inner sep=2pt, minimum width=2pt] (4) at (6,0) {$m$};

  % \draw (1) edge [->, very thick, loop left] (1);
  % \draw (2) edge [->, very thick, loop left] (2);
  % \draw (3) edge [->, very thick, loop right] (3);

  \draw[->, very thick] (1) -- (2);
  \draw[->, very thick] (2) -- (3);
  \draw[->, very thick] (3) -- (4);
\end{tikzpicture}
\end{center}
The conditional independence relations associated to the graph $\mathcal G$ above are fulfilled, for instance, by any distribution for $X$ satisfying the noisy linear relations 
\begin{equation}
    \label{eqn: markov chain}
    X_i = \lambda_{i-1}X_{i-1} + \varepsilon_i, \qquad i = 1,\ldots, m
\end{equation} 
for $\lambda_1,\ldots, \lambda_{n-1}\in \mathbb R$ and a choice of mutually independent and normally distributed error terms $\varepsilon_i\sim \mathcal N(0, \omega_i)$ with mean $0$. 
By varying the $\lambda_i\in \mathbb R$ and $\omega_i\in\mathbb R_{>0}$ over all possible choices, we obtain a collection of distributions defined by the relations in~\eqref{eqn: markov chain}
that form the graphical model $\mathcal M(\mathcal G)$. 
\end{example}

Once a rule for constructing the graphical model $\mathcal M(\mathcal G)$ is specified, 
varying $\mathcal G$ over a collection of graphs $\mathbb G$ and constructing $\mathcal M(\mathcal G)$ via the specified rule produces a \emph{family} of graphical models $\mathcal F_\mathbb G = \{\mathcal M(\mathcal G): \mathcal G \in\mathbb G\}$. 
Oftentimes, the rule specifying $\mathcal M(\mathcal G)$ is a choice of parameterization, in which case $\mathcal F_{\mathbb G}$ is a \emph{parametric} graphical model family. 
By changing the parameterization rule we can produce multiple graphical model families for the same set of graphs $\mathbb G$, each of which may fit a different type of data.   
Alternatively, by changing $\mathbb G$, we can model different flavors of dependence structure. 

This flexibility of graphical models is one of the key factors underlying their recent, and notably broad, applications. 
However, making use of graphical models requires inference techniques fit to the chosen graphical model family. 
Since each family $\mathcal F_{\mathbb G}$ is defined according to a different set of assumptions, we are tasked with addressing several fundamental challenges for \emph{each} family.  
These challenges are collectively referred to as the \emph{graphical models program}.

\begin{trailer}{The Graphical Models Program}
For a graphical model family $\mathcal{F}_{\mathbb G}$:
\begin{enumerate}
    \item (Model distinguishability) Determine when $\mathcal M(\mathcal G) = \mathcal M(\mathcal H)$ for $\mathcal G,\mathcal H\in \mathbb G$. 
    \item (Parameter identifiability and inference) Deduce if the value of a given parameter can be uniquely determined for a typical distribution belonging to $\mathcal M(\mathcal G)\in \mathcal F_{\mathbb G}$. Provide sound estimators for the parameter based on the observed data. 
    \item (Model selection) Determine an optimal graph $\mathcal G\in \mathbb G$ and distribution in $\mathcal M(\mathcal G)\in \mathcal F_{\mathbb G}$ that best fits the observed multivariate sample. 
\end{enumerate}
\end{trailer}

The three tasks in the graphical models program are related, and adequate answers to each are required for making sound inferences with the graphical model family $\mathcal F_{\mathbb G}$. 

Task~(2) is familiar from a basic statistics course, where the focus is on developing sound estimators for parameters of interest. 
The concept of parameter identifiability here is critical to sound interpretations of these estimators.  
For example, if multiple values $\lambda_i\neq \lambda_i'$ could be used in Example~\ref{ex: markov chain} to parameterize the same distribution in $\mathcal M(\mathcal G)$, we would need to ask ourselves which of the two values our estimators are approximating and what exactly this means for our model. 

Task~(1) also relates to sound interpretability of parameter estimates. 
For example, if a graph $\mathcal H$ satisfies $\mathcal M(\mathcal G) = \mathcal M(\mathcal H)$ for the Markov chain $\mathcal G$ in Example~\ref{ex: markov chain}, but $\mathcal H$ lacks an edge $i\to i + 1$ belonging to $\mathcal G$ then we need to be careful when interpreting an estimate of the parameter $\lambda_i$. 

Task~(3) is a process commonly referred to as \emph{structure learning}, and it is closely related to tasks~(1) and~(2).  
Before we can begin making inferences about parameters based on a dependence structure $\mathcal G$, we must first have a good way of deciding \emph{which} dependence structure $\mathcal G\in \mathbb G$ is most representative of our data. 

Interestingly, for many graphical model families there is a fundamental geometric and algebraic component to completing the graphical models program.  
For example, the Markov chain model $\mathcal M(\mathcal G)$ in Example~\ref{ex: markov chain} consists of multivariate Gaussian distributions that are completely described by their covariance matrices $\Sigma \in \textrm{Sym}(\mathbb R^m)$. 
Hence, $\mathcal M (\mathcal G)$ admits a `geometric' realization as a subset of covariance matrices living in the space of real symmetric $m\times m$ matrices $\textrm{Sym}(\mathbb R^m)$. 
This subset of $\textrm{Sym}(\mathbb R^m)$ is specified \emph{parametrically} via the equations in~\eqref{eqn: markov chain}, but it turns out that $\mathcal M(G)$ is equivalently defined as the set of points in $\textrm{Sym}(\mathbb R^m)$ satisfying a collection of equations $f_1(\Sigma) = 0,\ldots, f_s(\Sigma) = 0$ and inequalities $g_1(\Sigma)\geq 0, \ldots, g_t(\Sigma)\geq 0$ for a finite set of polynomials $f_1,\ldots, f_s, g_1,\ldots, g_t$ in the entries $\sigma_{ij}$ of the matrix $\Sigma = (\sigma_{ij})_{i,j=1}^m$. 

Identifying these polynomial constraints amounts to finding a so-called \emph{implicit representation} of the model $\mathcal M(\mathcal G)$, which is a useful tool when completing the graphical models program for the family $\mathcal F_{\mathbb G}$. 
Until recently, the theory underlying implicitization techniques and the theory underlying graphical models have developed largely in parallel. 
This is because the former finds its basis in the field of algebraic geometry, while the latter has roots in mathematical statistics. 
The application of algebraic implicitization in the graphical models program is new territory, with first explorations lead by statisticians including Mathias Drton and Steffen Lauritzen \cite{drton2018,Lauritzen1996}, and mathematicians including Bernd Sturmfels and Seth Sullivant \cite{DSS, sullivant2023}. 

The purpose of these notes is to place side-by-side the basic principles of the graphical models program and algebraic implicitization techniques. 
In an effort to bridge disciplines, we describe the basic theory of each, written with the intent of being accessible to statisticians unfamiliar with algebra as well as to algebraists unfamiliar with statistics.
To be relatively self-contained, we include proofs of some fundamental results. 
However, many of these results, along with additional details in each topic, can be found in classical texts, such as \cite{KF, Lauritzen1996, murphy, pearl2009} for graphical models and \cite{cox, DSS, sullivant2023} for algebraic implicitization. 
To demonstrate the value of algebraic implicitization techniques in the graphical models program, these notes culminate with an example of a parametric graphical model family for which the only known solution to the graphical models program uses the methods of algebraic implicitization. 
The complete details of this family can be found in the recent paper of Boege et al. \cite{boege}.

%---Probability and Graphs
\section{Probability and Graphs}
\label{sec: graphical models}

Our first task is to formally define graphical models and describe their fundamental properties. 
Since the goal of graphical models is to model the dependence relations underlying observed multivariate data,  we need to establish some basic probabilistic assumptions that we will use throughout. 

\subsection{Probabilistic assumptions}
\label{subsec: probability}
% Throughout these notes, we will use the following notation and assumptions. 
For a positive integer $m$, we let $[m] = \{1,\ldots, m\}$. 
Let $X = (X_i)_{i\in[m]}$ be a collection of random variables where $X_i$ takes values in a set $\mathcal X_i$. 
For a subset $A\subseteq [m]$, $X_A = (X_i)_{i\in A}$ will denote the subcollection of variables in $X$ with indices in $A$ and $\mathcal X_A = \prod_{i\in A}\mathcal X_i$ will denote its induced sample space.  
Note that $X = X_A$ in the case that $A = [m]$. 
Analogously, we write $\mathcal X$ for $\mathcal X_{[m]}$. 
A realization of $X_A$, i.e.~an element of $\mathcal X_A$, will be denoted $x_A = (x_i)_{i\in A}$. 

The set $\mathcal X$ is the collection of all multivariate observations that we can potentially observe in our data-generating scenario. 
Since the data-generating process is unknown to us, we will assume it follows some (unknown) joint distribution $\mathbb P$ for the variables $X$. 
To isolate a relatively general yet self-contained scenario, we assume the joint distributions $\mathbb P$ are absolutely continuous with respect to some product measure on $\mathcal X = \prod_{i\in[m]}\mathcal X_i$. 
Hence, each distribution $\mathbb P$ has a density $f_X(x) = f_X(x_1,\ldots,x_m)$, which we assume satisfies the positivity assumption $f_X(x)>0$ for all $x\in \mathcal X$. 

\begin{definition}
    \label{def: distribution set}
    Let $X=(X_i)_{i\in[m]}$ be a collection of random variables taking values in the joint sample space $\mathcal X = \prod_{i=1}^m\mathcal X_i$.
    We let $\mathbb D_X$ denote the set of all distributions $\mathbb P$ for $X$ that are absolutely continuous with respect to some product measure $\mu$ on $\mathcal X$ and have density function $f_X(x)$  satisfying $f_X(x)>0$ for all $x\in \mathcal X$.
\end{definition}

For instance, most of our examples will either have all $X_i$ continuous (with $\mathcal X_i = \mathbb R$ and $\mu$ the Lebesgue measure), or all $X_i$ discrete, in which case $\mathcal X_i = [d_i]$ for some positive integers $d_1,\ldots, d_m$. 
In this context, $f_X(x)$ is the familiar the probability density (resp. mass) function of $\mathbb P$. 
That is, for $(b_1,\ldots, b_m)\in \mathcal X$
\[
\begin{split}
    \textrm{Pr}(X_1\leq b_1,\ldots, X_m\leq b_m) &= \int_{-\infty}^{b_1}\cdots \int_{-\infty}^{b_m}f_X(x_1,\ldots, x_m)dx_1\cdots dx_m \quad \textrm{(continuous)}\\
    \textrm{Pr}(X_1= b_1,\ldots, X_m = b_m) &= f_X(b_1,\ldots, b_m) \qquad \qquad \qquad \qquad \qquad \quad \, \,   \textrm{(discrete)}
\end{split}
\]
where, in particular, $f_X(X)$ integrates to $1$ with respect to the base measure $\mu$ over the sample space $\mathcal X$ and $\mu$-measure zero sets all have probability $0$. 

When $\mathcal X_i = [d_i]$ for $i = 1,\ldots, m$ and $\mu$ is the counting measure, the set $\mathbb D_X$ defined in Definition~\ref{def: distribution set} restricts to the set of distributions $\mathbb P$ with a mass function $f_X(x)$ specifying a point in the \emph{(open) probability simplex}
\[
\Delta_{\mathcal X}^\circ = \left\{p = (p_x)_{x\in \mathcal X}\in \mathbb R^{\mathcal X} : p_x > 0 \textrm{ for all $x\in \mathcal X$ and } \sum_{x\in \mathcal X}p_x = 1\right\}; 
\]
that is, $(f_X(x))_{x\in \mathcal X}\in \Delta_{\mathcal X}^\circ$ if and only if $\mathbb P \in \mathbb D_X$. 

On the other hand, when the $X_i$ are continuous, we may consider the set of all multivariate normal distributions for $X = (X_i)_{i\in[m]}$ with mean vector $0$, denoted $\mathcal N_{0,m}$.  
In this case, we have $\mathcal N_{0,m}\subseteq \mathbb D_X$, but this containment is strict.

While much of the theory we will discuss applies beyond the assumptions defining $\mathbb D_X$ (see \cite{Lauritzen1996}), we will be working with conditional probabilities and therefore care needs to be taken \cite{Dawid1980}.  
The assumptions defining $\mathbb D_X$ in Definition~\ref{def: distribution set} assure that the techniques in these notes can be applied in many modern data science scenarios without fear.  

Specifically, for $\mathbb P\in \mathbb D_X$, we obtain the marginal density $f_{X_A}(x_A)$ by integrating $f_X(x)$ over $\mathcal X_{[m]\setminus A}$ with respect to the base measure $\mu$.
Given $x_C\in \mathcal X_C$ with $f_{X_C}(x_C)>0$, we can then define the conditional density $f_{X_A \mid X_C}(x_A\mid x_C)$ for $X_A$ given $X_C = x_C$ as
\[
f_{X_A \mid X_C}(x_A \mid x_C) = \frac{f_{X_{A\cup C}}(x_A, x_C)}{f_{X_C}(x_C)} 
\]
for all $x_A\in \mathcal X_A$. 

The guiding principle of graphical models is that a graph may be used to combinatorially represent potential dependence relations in a distribution $\mathbb P$ for $X$. 
Thinking conversely, a graphical model specifies independence information via the \emph{absence} of edges in a graph. 

The most basic form of stochastic independence is, of course, conditional independence. 
We say that $X_A$ is \emph{conditionally independent} of $X_B$ given $X_C$ if 
\[
f_{X_{A\cup B} \mid X_C}(x_A,x_B\mid x_C) = f_{X_A \mid X_C}(x_A \mid x_C)f_{X_B \mid X_C}(x_B \mid x_c)  
\]
for all $x_A\in \mathcal X_A, x_B\in \mathcal X_B$ whenever $f_{X_C}(x_C)>0$.
When $X_A$ is conditionally independent of $X_B$ given $X_C$, we write $X_A\independent X_B \mid X_C$. 
Otherwise, we say $X_A$ is (conditionally) \emph{dependent} on $X_B$ given $X_C$, which we denote by $X_A \not\independent X_B \mid X_C$. 

Graphical models are diverse, with different types of graphical models being used to model different types of dependence relations that may underlie the generation of multivariate data \cite{collazo, duarte, Lauritzen1996, richardson, peters2017, shimizu2006, varando}.
In fact, the type of dependence driving the definition of a certain graphical model need not be conditional independence (see for instance \cite{varando}). 
However, conditional independencies typically appear in one way or another when working with graphical models. 
The positivity assumption on $\mathbb P\in \mathbb D_X$ in Definition~\ref{def: distribution set} ensures that we can always accommodate any conditional independencies that may arise. 
Specifically, it guarantees that $f_{X_C}(x_C)>0$ for all $C\subseteq[m]$ and all $x_C\in \mathcal X_C$, allowing us to freely consider conditional distributions. 
Finally, we also operate under the basic topological assumption that all discrete functions are continuous.  
In particular, when all $X_i$ are discrete, the continuity assumption is always satisfied. 

\subsection{Graphs}
\label{subsec: graphs}

A graph $\mathcal G = (V,E)$ is an ordered pair of sets, where $V$ is the set of \emph{vertices} (or \emph{nodes}) of $\mathcal G$, and $E$ is its set of edges. 
We will most often let $V = [m]$ for a positive integer $m$. 
Different types of edges are used to denote different types of dependences, matching basic human intuition.  
For instance, an undirected edge $i - j$ is used to represent the possibility that $X_i$ and $X_j$ covary, while a directed edge $i\to j$ is more suggestive of a directional relation, e.g., that the observed outcomes of $X_i$ are effecting the outcomes we observe for $X_j$. 
A third type of edge serves as an intuitive representation of two nodes $i,j$ each being effected by some ``unseen'' node $k$.  
This type of edge is \emph{bidirected} $i\leftrightarrow j$ which represents the pair of directed edges $i\leftarrow k \rightarrow j$ where the node $k$ is \emph{unobserved} (i.e., $k$ does not correspond to a measured variable $X_k$). 

The edge set can consist of lots of different types of edges (e.g., undirected, directed or bidirected). 
However, unless otherwise stated, we will assume that $E$ contains at most one edge between any two nodes $i,j$; that is, we do not allow, for example, that both $i\leftrightarrow j, i\to j\in E$. 
We also assume that $E$ does not contain any \emph{self-loops}; i.e., there is no edge between $i$ and itself for any node $i\in V$. 

If the edge set $E$ contains a mixture of undirected, directed and bidirected edges then we call $\mathcal G$ a \emph{mixed graph}. 
If the edge set $E$ consists of only undirected edges, then $\mathcal G$ is an \emph{undirected graph}.  
Similarly, if $E$ contains only directed edges, then $\mathcal G$ is a \emph{directed graph}. 
A \emph{directed cycle} in $\mathcal G$ is a subset of directed edges $S$ of $E$ such that 
\[
S = \{ i_0 \to i_1,i_1\to i_2,\ldots, i_{k-1}\to i_k,i_k \to i_0\}. 
\]
If we draw a picture of a directed cycle, we see it is intuitively describing a feedback loop in the system: 
\begin{center}
\begin{tikzpicture}[thick, scale=0.6]
    \node[circle, draw, inner sep=1pt, minimum width=1pt] (1) at (0,0)  {$1$};
    \node[circle, draw, inner sep=1pt, minimum width=1pt] (2) at (2,0) {$2$};
    \node[circle, draw, inner sep=1pt, minimum width=1pt] (3) at (0,-2) {$3$};
    \node[circle, draw, inner sep=1pt, minimum width=1pt] (4) at (2,-2) {$4$};
    
    \draw[->, very thick] (1) -- (2);
    \draw[<-, very thick] (1) -- (3);
    \draw[->, very thick] (2) -- (4);
    \draw[<-, very thick] (3) -- (4);
\end{tikzpicture}
\end{center}
If $\mathcal G$ is directed and also contains no \emph{directed cycles} then we call $\mathcal G$ a \emph{Directed Acyclic Graph} (DAG).  
DAGs capture the intuition of a system of directed dependencies without feedback loops. 
Here are examples (from left-to-right) of an undirected graph, a DAG and a mixed graph. 
\begin{center}
\begin{center}
\begin{tikzpicture}[thick, scale=0.6]
    \node[circle, draw, inner sep=1pt, minimum width=1pt] (1) at (0,0)  {$1$};
    \node[circle, draw, inner sep=1pt, minimum width=1pt] (2) at (2,0) {$2$};
    \node[circle, draw, inner sep=1pt, minimum width=1pt] (3) at (0,-2) {$3$};
    \node[circle, draw, inner sep=1pt, minimum width=1pt] (4) at (2,-2) {$4$};
    
    \draw[-, very thick] (1) -- (2);
    \draw[-, very thick] (1) -- (3);
    \draw[-, very thick] (2) -- (4);
    \draw[-, very thick] (3) -- (4);

    \node[circle, draw, inner sep=1pt, minimum width=1pt] (d1) at (0 + 6,0)  {$1$};
    \node[circle, draw, inner sep=1pt, minimum width=1pt] (d2) at (2 + 6,0) {$2$};
    \node[circle, draw, inner sep=1pt, minimum width=1pt] (d3) at (0 + 6,-2) {$3$};
    \node[circle, draw, inner sep=1pt, minimum width=1pt] (d4) at (2 + 6,-2) {$4$};
    
    \draw[->, very thick] (d1) -- (d2);
    \draw[->, very thick] (d1) -- (d3);
    \draw[->, very thick] (d2) -- (d4);
    \draw[->, very thick] (d3) -- (d4);

    \node[circle, draw, inner sep=1pt, minimum width=1pt] (b1) at (0 + 6 + 6,0)  {$1$};
    \node[circle, draw, inner sep=1pt, minimum width=1pt] (b2) at (2 + 6 + 6,0) {$2$};
    \node[circle, draw, inner sep=1pt, minimum width=1pt] (b3) at (0 + 6 + 6,-2) {$3$};
    \node[circle, draw, inner sep=1pt, minimum width=1pt] (b4) at (2 + 6 + 6,-2) {$4$};
    
    \draw[->, very thick] (b1) -- (b2);
    \draw[-, very thick] (b1) -- (b3);
    \draw[<->, very thick] (b2) -- (b4);
    \draw[->, very thick] (b3) -- (b4);
\end{tikzpicture}
\end{center}
\end{center}

If the edge set $E$ contains only directed and bidirected edges then we call $\mathcal G$ a \emph{directed mixed graph}. 
Thinking back to our description of a bidirected edge, we see that directed mixed graphs represent systems of directed dependencies where some of the vertices that could be impacting others are not observed.  
Similarly, \emph{acyclic directed mixed graphs} (ADMGs) do not contain directed cycles, and hence describe such systems where there are no feedback loops. 

\begin{remark}[On the use of bidirected and undirected edges]
Since the literature on graphical models is - relativistically speaking -  still in its early years, it is not unusual to see bidirected edges used in one paper or software package to carry a meaning that is assigned to undirected edges in another.  
This is a foreseeable consequence of the edges both intutively representing a sort of `unspecifiable' covariation.  

The distinction between the edge types we outlined above is one typically abided by in the graphical models literature pertaining to causality.
In this context, the types of edges are meant to distinguish between \emph{causal relations} $i\to j$, \emph{confounding} $i \leftrightarrow j$ (typically by some unobserved common cause) and correlation without a discernible causal interpretation ($i-j$).  

It is also possible to imagine scenarios where a relation is confounded in such a way that one can simply not distinguish the direction of an edge even though one may deduce that there is a directed edge between two nodes (i.e., we know either $i\to j$ or $j\to i$ exists but we do not know which one). 
Hence, in some literature and software packages, the bidirected edge is also used to represent this type of unspecifiability.  
However, since this situation most frequently arises due to a lack of identifiability as opposed to some latent effect, much of the literature uses the undirected edge $i - j$ to represent such situations. 
We will follow this convention. 
\end{remark}

To solidify this intuition, let's take a look at some examples of graphs from the literature that are used to model complex systems with edges interpreted as above. 

\begin{example}[Molecular Biology]
\label{ex: sachs network}
Graphical models are often applied in the fields of molecular biology and genomics, with a classic example being the protein signaling network studied by Sachs et al. \cite{sachs}. 
Here, the multivariate data set consist of joint samples of the expression levels of $11$ different molecules, with each sample being a simultaneous measurement of the abundance of each molecule in a single cell. 
The commonly accepted ground truth network representing how the individual molecule's expression levels are effecting one another is represented using the following graph with directed and bidirected edges. 
\begin{center}
\begin{tikzpicture}[thick]
    \def\n{11} 
    \def\radius{2.5cm}

    \foreach \name [count=\i] in {PKA, PKC, PIP3, Mek, Raf, PIP2, Plcg, Jnk, P38, Akt, Erk}{
        
        % Calculate angle evenly divided by the number of nodes (6 nodes total)
        \pgfmathsetmacro{\angle}{(\i-1) * (360/\n)}
        
        % Draw the node using polar coordinates: (angle:radius)
        % (node_id) is set to lower-case index-based string or name reference
        \node[circle, inner sep=1pt, minimum width=1pt] (\name) at (\angle:\radius) {\name};
    }

    % Example: You can now reference the specific names directly to draw paths!
    \draw[->, thick] (PKA) -- (Jnk);
    \draw[->, thick] (PKC) -- (Jnk);
    \draw[->, thick] (PKA) -- (P38);
    \draw[->, thick] (PKC) -- (P38);
    \draw[->, thick] (PKA) -- (Akt);
    \draw[->, thick] (PIP3) -- (Akt);
    \draw[->, thick] (PKA) -- (Erk);
    \draw[->, thick] (Mek) -- (Erk);
    \draw[->, thick] (PKA) -- (Mek);
    \draw[->, thick] (PKC) -- (Mek);
    \draw[->, thick] (Raf) -- (Mek);
    \draw[->, thick] (PKA) -- (Raf);
    \draw[->, thick] (PKC) -- (Raf);
    \draw[->, thick] (PIP2) -- (PKC);
    \draw[->, thick] (Plcg) -- (PKC);
    \draw[->, thick] (PIP3) -- (PIP2);
    \draw[->, thick] (Plcg) -- (PIP2);
    \draw[->, thick] (PIP3) -- (Plcg);
    \draw[->, thick] (PIP3) -- (PKA);
    \draw[->, thick] (PKA) -- (PIP3);

\end{tikzpicture}
\end{center}
\end{example}

\begin{example}[Human Behavior]
    \label{ex: traffic network}
    Other examples of graphical models in practice arise in economics, sociology, anthropology and other fields studying human behavior.  For instance, \cite{transport} recently used graphical models to represent and make inferences regarding the behavior of users of the Hong Kong subway system and how their behavior is effected by incentive programs aimed at encouraging people to travel before peak rush hour traffic. In their paper, they modeled the underlying measured features -  such as home location, work location, trip start location, trip end location, trip start-time and trip end-time -- with the following graph: 
    \begin{center}
\begin{tikzpicture}[scale=1.6, font=\fontsize{8}{0}\selectfont]
			\draw
			(0.0:2) node (flexibility){flexibility}
			(40.0:2) node (trip fare){trip fare}
			(80.0:2) node (trip end time){trip end time}
			(120.0:2) node (trip start time){trip start time}
			(160.0:2) node (avg trip travel time){avg trip travel time}
			(200.0:2) node (trip destination){trip destination}
			(240.0:2) node (mean time shift){mean time shift}
			(280.0:2) node (promo){promo}
			(320.0:2) node (trip origin){trip origin};
			\begin{scope}[-, thick]
				\draw (flexibility) to (trip origin);
				\draw (flexibility) to (mean time shift);
				\draw (flexibility) to (avg trip travel time);
				\draw (flexibility) to (trip destination);
				\draw (trip end time) to (trip start time);
				\draw (trip end time) to (trip fare);
				\draw (trip start time) to (trip fare);
				\draw (avg trip travel time) to (trip destination);
				\draw (avg trip travel time) to (mean time shift);
				\draw (avg trip travel time) to (trip origin);
				\draw (trip destination) to (trip origin);
				\draw (mean time shift) to (trip origin);
				\draw (mean time shift) to (trip destination);
			\end{scope}
			\begin{scope}[->, thick]
				\draw (flexibility) to (trip end time);
				\draw (flexibility) to (trip fare);
				\draw (flexibility) to (trip start time);
				\draw (avg trip travel time) to (trip start time);
				\draw (avg trip travel time) to (trip end time);
				\draw (avg trip travel time) to (trip fare);
				\draw (trip destination) to (trip start time);
				\draw (trip destination) to (trip end time);
				\draw (trip destination) to (trip fare);
				\draw (mean time shift) to (trip end time);
				\draw (mean time shift) to (trip fare);
				\draw (mean time shift) to (trip start time);
				\draw (trip origin) to (trip start time);
				\draw (trip origin) to (trip fare);
				\draw (trip origin) to (trip end time);
				\draw (promo) to (trip end time);
				\draw (promo) to (trip fare);
				\draw (promo) to (trip start time);
			\end{scope}
		\end{tikzpicture}
\end{center}
\end{example}

Graph theory is well-known for its reliance on numerous definitions and notation, and the graphical models literature is no exception. 
The following is a (nonexhaustive) list of common terminology and notation that appears in the literature when working with graphical models:

\begin{trailer}{Collected Graph Theory Terminology and Notation}
Let $G = (V,E)$ be a graph, $i, j\in V$ and $C\subseteq V$.  Recall that we assume $\mathcal G$ has at most one edge between each pair of nodes and contains no self-loops.
\begin{itemize}
    \item \emph{walk} (between $i$ and $j$): A sequence of vertices $(v_k)_{k\in[s]}$ with $v_1 = i$ and $v_s = j$ for which there exists an edge between $v_i$ and $v_{i+1}$ for all $i = 1,\ldots, s-1$.
    \item \emph{path} (between $i$ and $j$): A walk $(v_k)_{k\in[s]}$ with $v_1 = i$ and $v_s = j$ in which no vertex in the sequence is repeated.
    \item \emph{directed path} (from $i$ to $j$): A path $(v_k)_{k\in[s]}$ with $v_1 = i$ and $v_s = j$ for which the edges are directed as $v_i\to v_{i+1}$. 
    \item \emph{cycle}: A walk $(v_k)_{k\in[s]}$ with $v_1 = v_s$ in which no other vertices are repeated. 
    \item \emph{directed cycle}: A cycle $(v_k)_{k\in[s]}$ in which the edges are directed as $v_i\to v_{i+1}$.
    \item \emph{(induced) subgraph}: $G_C = (C, E_C)$ where 
    $
    E_C = \{ e \in E: \textrm{ $e$ connects } i,j\in A\}.
    $
    \item \emph{parents} of $i$: $\pa(i) = \{j\in V : j\to i\in E\}$.  
    \item \emph{children} of $i$: $\ch(i) = \{j\in V: i\to j\in E\}$. 
    \item \emph{descendants} of $i$: $\de(i) = \{j\in V: \textrm{ there exists a directed path from $i$ to $j$}\}$.
    \item \emph{nondescendants} of $i$: $\nd(i) = V\setminus \de(i)$. 
    \item \emph{ancestors} of $i$: $\an(i) = \{j\in V: \textrm{ there exists a directed path from $j$ to $i$}\}$.
    \item \emph{ancestors} of $C$: $\an(C) = \bigcup_{i\in C}\an(i)$.
    \item \emph{adjacent}: $i$ and $j$ are \emph{adjacent} if there is an edge between $i$ and $j$. 
    \item \emph{neighbors (neighborhood)} of $i$: $\nbd(i) = \{j\in V: j \textrm{ and } i \textrm{ are adjacent}\}$.
    \item \emph{closed neighborhood}: $\cnbd(i) = \nbd(i)\cup\{i\}$.
\end{itemize}
\end{trailer}

We also collect our notation for specific families of graphs that will appear in these notes. 

\begin{trailer}{Some families of graphs}
    \begin{itemize}
        \item $\mathbb{UG}$: The set of all undirected graphs
        \item $\mathbb{DAG}$: The set of all DAGs
        \item $\mathbb{DAN}$: The set of all directed ancestral graphs 
        \item $\textrm{c-}\mathbb{UG}$: The set of all colored undirected graphs
        \item $\textrm{c-}\mathbb{DAG}$: The set of all colored DAGs
        \item $\mathbb{BPEC}$: The set of all BPEC DAGs %(see Definition~\ref{def: BPEC DAG})
    \end{itemize}
\end{trailer}
We place these definitions and notation here for convenience, but we will also introduce again them as needed.

\section{Conditional Independence Graphical Models}
\label{subsec: CI models}

The discussion in Section~\ref{subsec: graphs} highlights how different types of graphs can carry different meanings for the dependencies they encode (e.g. correlative or causal).  
In practice, one typically states how they intend to interpret the edges of their graphs before any estimation with data is performed. 
Once a type of graph is chosen (and how it will be interpreted is asserted), the task is then to associate a \emph{statistical model} (e.g. a candidate set of distributions for the unknown data-generating process) to the graph.  
As we will describe, there are many reasonable ways to go about doing this.  
Since the most ubiquitous form of stochastic (in)dependence is conditional (in)dependence, it is natural to start with \emph{conditional independence} (CI) \emph{graphical models}. 
The graphical models program for CI models is now, more or less, complete thanks largely to the work of researchers including Steffen Lauritzen \cite{Lauritzen1996}, Judea Pearl \cite{pearl2009} and Thomas Richardson \cite{richardson}.  
These results form the basis for much of the ongoing research in graphical models. 

For the sake of a completeness, this section includes several classic proofs of fundamental theorems.  
A more complete version of this story, including many of these proofs and detailed extensions of how the results extend beyond the assumptions in Definition~\ref{def: distribution set}, is given in the classic book \emph{Graphical Models} by Steffen Lauritzen \cite{Lauritzen1996}. 

\subsection{The logic of Markov properties}
\label{subsec: markov properties}
The first step in constructing a graphical model family is to specify a collection of graphs $\mathbb G$.  
Typical choices include the set of all undirected graphs, denoted $\mathbb{UG}$, or the set of all directed acyclic graphs (DAGs), denoted $\mathbb{DAG}$.  
However, major advances in the graphical models program for CI models based on more complex families of mixed graphs have also been made in recent years (see, for example, Section~\ref{subsubsec: ancestral models} below). 

The types of edges allowed in the graphs $\mathcal G\in \mathbb G$ determine the conditional independence relations represented by the graphs, based on the intuitive interpretations discussed in Section~\ref{subsec: graphs}.  
To define a conditional independence graphical model for the graph $\mathcal G$, we need a combinatorial rule for assigning a collection of CI relations to $\mathcal G$ based on its edge structure.  Such a rule is called a \emph{Markov property} for $\mathcal G$. 

\begin{definition}
    \label{def: markov property}
    Let $\mathcal G$ be a graph.  
    A \emph{Markov property} for $\mathcal G$ is a rule for assigning a collection $\CI(\mathcal G)$ of conditional independence relations to $\mathcal G$. 
\end{definition}

There are several common principles by which to define a Markov property for $\mathcal G$ that are generally agnostic to the type of edges allowed in $\mathcal G$:
\begin{trailer}{Traditional principles by which to assign a Markov property to a graph}
\label{trail: markov properties}
    For a graph $\mathcal G=(V,E)$, one may specify the set of CI relations $\CI(\mathcal G)$ according to
    \begin{enumerate}
    \item the individual missing edges in $\mathcal G$; i.e., the elements $e\notin E$,
    \item the adjacencies in $\mathcal G$ of individual nodes $v\in V$, or
    \item a notion of `connectedness' for subsets of nodes within the graph $\mathcal G$. 
    \end{enumerate}
\end{trailer}

Each of the three choices above can be rigorously formalized for most types of graphs, and each captures a different intuition about how a graph  encodes (in)dependence structure.  
Method (1) focuses on pairwise interactions, specifying that the absence of an edge between two nodes should imply some conditional independence between the pair, while method (2) focuses on the intuition that each node should be conditionally independent from some others when its immediate neighbors in the graph are considered.  
Alternatively, method (3) takes the perspective that if there is some (rigorous) way in which two subsets of nodes in the graph can be separated, then this separation should constitute a conditional independence.

Each of the three methods for specifying a Markov property will define a model $\mathcal M(\mathcal G)$ for each $\mathcal G\in \mathbb G$.
These three models for the graph $\mathcal G$ are naturally related.  
In the next section, we describe this relation when $\mathbb G$ is the set of all undirected graphs.

\subsection{Conditional independence undirected graphical models}
\label{subsubsec: UG models}

In this section, $\mathcal G = ([m], E)\in \mathbb{UG}$ will be an undirected graph with vertex set $[m]$. %, and $\mathbb G$ will denote the set of all undirected graphs. 
To construct conditional independence models for an undirected graph $\mathcal G$, we use the standard methods for specifying a Markov property for $\mathcal G$ outlined in Section~\ref{subsec: markov properties}.  
This will give us three different CI graphical models for the graph $\mathcal G$. 

\begin{definition}[Markov Properties for Undirected Graphs]
\label{def: undirected MPs}
Let $\mathcal G=([m], E)$ be an undirected graph. 
\begin{enumerate}
    \item The \emph{pairwise Markov property} for $\mathcal G$ is the set of CI relations
    \[
    \CI_P(\mathcal G) = \{X_i\independent X_j \mid X_{[m]\setminus \{i,j\}} : i-j\notin E\}. 
    \]
    \item The \emph{local Markov property} for $\mathcal G$ is the set of CI relations
    \[
    \CI_L(\mathcal G) = \{X_i\independent X_{[m]\setminus \cnbd(i)} \mid X_{\nbd(i)} : i\in[m]\}.
    \]
    \item The \emph{global Markov property} for $\mathcal G$ is the set of CI relations
    \[
    \CI_G(\mathcal G) = \{X_A\independent X_B \mid X_C : A,B,C\subseteq[m] \textrm{ where $A$ and $B$ are separated in $\mathcal G$ by $C$}\}.
    \]
\end{enumerate}
\end{definition}

Note that the pairwise Markov property is a rigorous way of specifying a Markov property based on individual missing edges in undirected graphs. 
This was our first standard way of defining a Markov property in Section~\ref{subsec: markov properties}. 
The local Markov property is a rigorous formulation of the second method, and the global Markov property corresponds to the third. 
For the global Markov property we need to formally state what it means for two subsets of nodes in the graph to be separated by a third.  

To this end, let $s$ be a nonnegative integer and consider a sequence of vertices $(v_i)_{i\in [s+1]}$ belonging to a graph $\mathcal G$.  
We define a \emph{path} (between $v_1$ and $v_{s+1}$) as a sequence of edges $(e_i)_{i\in [s]}$ for some nonnegative integer $s$, where the edge $e_i$ has endpoints $v_i$ and $v_{i+1}$.  
When $s= 0$, the edge sequence is empty, which corresponds to the trivial path between $v_1$ and itself (e.g.~the path using no edges). 

\begin{definition}
    \label{def: undirected connectedness}
    Let $\mathcal G = ([m],E)\in \mathbb{UG}$, and let $A,B,C\subseteq [m]$. 
    We say that $A$ and $B$ are \emph{connected} given $C$ if there exist $a\in A$, $b\in B$ and a path between $a$ and $b$ in $\mathcal G$ that does not use nodes in $C$. 
    Otherwise, we say $A$ and $B$ are \emph{separated} given $C$ in $\mathcal G$, which we denote $A \perp_\mathcal G B \mid C$. 
\end{definition}

\begin{example}
    \label{ex: undirected grid}
    Let $\mathcal G = (V, E)$ be the following graph on node set $V = \{1,\ldots, 9\}$.
    \begin{center}
    \begin{tikzpicture}[thick, scale=0.6]
    % \tikzset{>={Stealth[length=3mm, round]}}
    \node[circle, draw, inner sep=1pt, minimum width=1pt] (1) at (0,0)  {$1$};
    \node[circle, draw, inner sep=1pt, minimum width=1pt] (2) at (1,1) {$2$};
    \node[circle, draw, inner sep=1pt, minimum width=1pt] (3) at (1,-1) {$3$};
    \node[circle, draw, inner sep=1pt, minimum width=1pt] (4) at (2,0) {$4$};
    \node[circle, draw, inner sep=1pt, minimum width=1pt] (5) at (2,2) {$5$};
    \node[circle, draw, inner sep=1pt, minimum width=1pt] (6) at (3,1) {$6$};
    \node[circle, draw, inner sep=1pt, minimum width=1pt] (7) at (2,-2) {$7$};
    \node[circle, draw, inner sep=1pt, minimum width=1pt] (8) at (3,-1) {$8$};
    \node[circle, draw, inner sep=1pt, minimum width=1pt] (9) at (4,0) {$9$};
    
    % \draw (1) edge [->, very thick, loop left] (1);
    % \draw (2) edge [->, very thick, loop left] (2);
    % \draw (3) edge [->, very thick, loop right] (3);
    
    \draw[-, very thick] (1) -- (2);
    \draw[-, very thick] (1) -- (3);
    \draw[-, very thick] (2) -- (4);
    \draw[-, very thick] (2) -- (5);
    \draw[-, very thick] (3) -- (4);
    \draw[-, very thick] (3) -- (7);
    \draw[-, very thick] (4) -- (6);
    \draw[-, very thick] (4) -- (8);
    \draw[-, very thick] (5) -- (6);
    \draw[-, very thick] (7) -- (8);
    \draw[-, very thick] (6) -- (9);
    \draw[-, very thick] (8) -- (9);
    \end{tikzpicture} 
    \end{center}
    Grid graphs such as $\mathcal G$ are the basis of the classic \emph{Ising model} \cite{KF}, which provide a simple model for the spread of infectious disease.  Imagine that each node in the graph is a plant in a field.  We model the infection status of plant $i$ with a binary random variable where $X_i = 1$ if the plant is infected and $X_i = 0$ otherwise.  
    A simple hypothesis is that the disease likely spreads via its nearest neighbors in the grid formation. 
    So the edges in the graph represent that there is a possible correlation between the values of $X_i$ and $X_j$ when plants $i$ and $j$ are neighbors in the field. 

    The Markov properties in Definition~\ref{def: undirected MPs} each encode intuitive information regarding the probability that certain plants are infected given knowledge of the status of other plants. 
    The CI relation $X_4 \independent X_{\{1,5,7,9\}} \mid X_{\{2,3,4,8\}}\in \CI_L(\mathcal G)$ captures the intuition that, provided with the infection status of the plants nearest to $4$, now new information can be gained about the probability of plant $4$ being infected when told the status of plants further away in the field. 
    Similarly, the pairwise relation $X_1 \independent X_9 \mid X_{\{2,3,4,5,6,7,8\}}\in \CI_P(\mathcal G)$ encodes the idea that the status of two non-neighboring plants in the field should not inform one another beyond the information provided via the infection status of all other plants. 
    The global relations, such as $X_{\{2,3\}} \independent X_{\{6,8,9\}} \mid X_{\{4,5,7\}} \in \CI_G(\mathcal G)$ encode the intuition that no new information should be obtainable about the infection status on one side of the field from the infection-status on the other side, provided we already understand the infection status at the middle of the field. 
\end{example}

Example~\ref{ex: undirected grid} gives some basic intuition about how we may use an undirected graph to describe (in)dependence relations in a data problem.  
Depending on the available intuition for the dependence structure underlying the data problem, one Markov property may be more natural to work with than another. 
The task then is to describe the statistical model (e.g. set of distributions) we obtain from our choice of Markov property, and how these different models relate. 
For each of the three Markov properties in Definition~\ref{def: undirected MPs} we may define a conditional independence model associated to the undirected graph $\mathcal G$ as follows. 

\pagebreak

\begin{definition}
    \label{def: undirected GMs}
    Let $G = ([m], E)$ be an undirected graph. 
    \begin{enumerate}
        \item The \emph{pairwise undirected CI graphical model} for $\mathcal G$ is 
        \[
        \mathcal M_P(\mathcal G) = \{ \mathbb P \in \mathbb D_X: \textrm{ $\mathbb P$ satisfies all CI relations in $\CI_P(\mathcal G)$}\}. 
        \]
        \item The \emph{local undirected CI graphical model} for $\mathcal G$ is 
        \[
        \mathcal M_L(\mathcal G) = \{ \mathbb P \in \mathbb D_X: \textrm{ $\mathbb P$ satisfies all CI relations in $\CI_L(\mathcal G)$}\}. 
        \]
        \item The \emph{global undirected CI graphical model} for $\mathcal G$ is 
        \[
        \mathcal M_G(\mathcal G) = \{ \mathbb P \in \mathbb D_X: \textrm{ $\mathbb P$ satisfies all CI relations in $\CI_G(\mathcal G)$}\}. 
        \]
    \end{enumerate}
\end{definition}

When considering the models defined in Definition~\ref{def: undirected GMs}, it is important to remember our basic assumptions on the set of distributions $\mathbb D_X$ in Definition~\ref{def: distribution set}. 
In particular, the models in Definition~\ref{def: undirected GMs} are well-defined since all necessary conditional distributions exist. 
It is not difficult to see that the three models in Definition~\ref{def: undirected GMs} are related. 
To rigorously describe their relationship, we use a fundamental calculus that holds for conditional independence relations. 

\begin{proposition}
    \label{prop: CI axioms}
    Let $\mathbb P\in \mathbb D_X$ be a distribution for $X = (X_i)_{i\in [m]}$, and let $A,B,C,D\subseteq [m]$.  
    The following implications hold for $\mathbb P$.
    \begin{itemize}
        \item (Symmetry) If $X_A \independent X_B \mid X_C$ then $X_B\independent X_A \mid X_C$. 
        \item (Decomposition) If $X_A \independent X_{B\cup D} \mid X_C$ then $X_A \independent X_B \mid X_C$. 
        \item (Weak Union) If $X_A \independent X_{B\cup D} \mid X_C$ then $X_A \independent X_B \mid X_{C\cup D}$.
        \item (Contraction) If $X_A \independent X_B \mid X_{C\cup D}$ and $X_A \independent X_D \mid X_C$ then $X_A \independent X_{B\cup D} \mid X_C$.
        \item (Intersection) If $X_A \independent X_B \mid X_{C\cup D}$ and $X_A \independent X_D \mid X_{B\cup C}$ then $X_A \independent X_{B\cup D} \mid X_C$. 
    \end{itemize}
\end{proposition}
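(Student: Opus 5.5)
Throughout, I will work with densities and use that every $\mathbb P\in\mathbb D_X$ has a strictly positive density, so all marginal and conditional densities exist and are positive. I will use a factorization criterion: $X_A\independent X_B\mid X_C$ holds if and only if
\[
f_{X_{A\cup B\cup C}}(x_A,x_B,x_C)\,f_{X_C}(x_C)=f_{X_{A\cup C}}(x_A,x_C)\,f_{X_{B\cup C}}(x_B,x_C)
\]
for all $x$. Equivalently, it holds if and only if $f_{X_{A\cup B\cup C}}=g(x_A,x_C)h(x_B,x_C)$ for some nonnegative functions $g,h$. I will verify this lemma first. One direction is a rearrangement of the definition. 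For the other, I integrate out $x_A$ and $x_B$ to identify $g$ and $h$ with the marginals up to a function of $x_C$. I will treat $A,B,C,D$ as pairwise disjoint, which is the intended reading. Symmetry is then immediate, because the defining identity is symmetric in $A$ and $B$.

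For Decomposition, I integrate the identity for $X_A\independent X_{B\cup D}\mid X_C$ over $x_D$ with respect to $\mu$; the resulting identity is exactly the one for $X_A\independent X_B\mid X_C$. For Weak Union, I write $f_{A\cup B\cup C\cup D}=g(x_A,x_C)h(x_B,x_D,x_C)$. Regarding $C\cup D$ as the conditioning set, the first factor is a function of $(x_A,x_{C\cup D})$ and the second of $(x_B,x_{C\cup D})$, so the factorization lemma gives $X_A\independent X_B\mid X_{C\cup D}$.

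For Contraction, I divide by $f_{X_C}$ and use the chain rule $f(x_A,x_B,x_D\mid x_C)=f(x_A\mid x_B,x_D,x_C)\,f(x_B,x_D\mid x_C)$. The first hypothesis gives $f(x_A\mid x_B,x_D,x_C)=f(x_A\mid x_D,x_C)$, and the second gives $f(x_A\mid x_D,x_C)=f(x_A\mid x_C)$. Substituting both yields the product $f(x_A\mid x_C)\,f(x_B,x_D\mid x_C)$, which is the desired relation.

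Intersection is the main obstacle, and it is the only place where positivity is essential. The hypotheses give two factorizations of the positive joint density:
\[
f(x_A,x_B,x_C,x_D)=g(x_A,x_C,x_D)\,h(x_B,x_C,x_D)=k(x_A,x_B,x_C)\,l(x_B,x_C,x_D),
\]
where every factor can be taken to be a positive conditional or marginal density. From this I get $g(x_A,x_C,x_D)/l(x_B,x_C,x_D)=k(x_A,x_B,x_C)/h(x_B,x_C,x_D)$. I then fix reference values $x_B=b_0$ and $x_D=d_0$. Setting $x_B=b_0$ in the left-hand expression gives $g(x_A,x_C,x_D)=l(b_0,x_C,x_D)\,k(x_A,b_0,x_C)/h(b_0,x_C,x_D)$, which writes $g$ as a function of $(x_A,x_C)$ times a function of $(x_C,x_D)$. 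Substituting this back into the first factorization gives
\[
f=k(x_A,b_0,x_C)\cdot\frac{l(b_0,x_C,x_D)\,h(x_B,x_C,x_D)}{h(b_0,x_C,x_D)},
\]
which is a factorization of the form $a(x_A,x_C)\,c(x_B,x_D,x_C)$. The factorization lemma then gives $X_A\independent X_{B\cup D}\mid X_C$. The care needed here is that $b_0$ must lie in $\mathcal X_B$ and all the divisions must be legitimate; both are guaranteed because the density is positive everywhere on $\mathcal X$.
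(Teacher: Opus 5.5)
Your proposal is correct and follows the route the paper indicates: the paper gives no detailed proof, saying only that the axioms follow from ``straightforward manipulations of the density $f_X(x)$,'' and your factorization-criterion argument supplies exactly those manipulations, with positivity essential only in Intersection, consistent with Remark~\ref{rem: weakening of assumptions}. The reference value $d_0$ you introduce in the Intersection step is never used and can be dropped.
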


The implications listed in Proposition~\ref{prop: CI axioms} are sometimes referred to as the \emph{conditional independence axioms}, and their proof amounts to straightforward manipulations of the density $f_X(x)$ for $\mathbb P$. 
It is also not hard to show that the same set of implications hold for our notion of undirected graph separation in Definition~\ref{def: undirected connectedness}. 

\begin{proposition}
    \label{prop: UG axioms}
    Let $\mathcal G = ([m], E)$ be an undirected graph, and let $A,B,C,D\subseteq [m]$. 
    The following implications hold 
    \begin{itemize}
        \item (Symmetry) If $A \perp_{\mathcal G} B \mid C$ then $B \perp_{\mathcal G} A \mid C$. 
        \item (Decomposition) If $A \perp_{\mathcal G} {B\cup D} \mid C$ then $A \perp_{\mathcal G} B \mid C$. 
        \item (Weak union) If $A \perp_{\mathcal G} {B\cup D} \mid C$ then $A \perp_{\mathcal G} B \mid {C\cup D}$.
        \item (Contraction) If $A \perp_{\mathcal G} B \mid {C\cup D}$ and $A \perp_{\mathcal G} D \mid C$ then $A \perp_{\mathcal G} {B\cup D} \mid C$.
        \item (Intersection) If $A \perp_{\mathcal G} B \mid {C\cup D}$ and $A \perp_{\mathcal G} D \mid {B\cup C}$ then $A \perp_{\mathcal G} {B\cup D} \mid C$. 
    \end{itemize}
\end{proposition}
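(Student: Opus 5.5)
We argue directly from Definition~\ref{def: undirected connectedness}, recasting each statement as a claim about paths. Throughout, call a path \emph{$C$-avoiding} if none of its vertices lies in $C$; then $A\perp_{\mathcal G} B\mid C$ means no $C$-avoiding path joins a vertex of $A$ to a vertex of $B$. Every implication will be proved in contrapositive form: given a path witnessing connection in the conclusion, we produce a path witnessing connection in one of the hypotheses. We may assume $A$ is disjoint from the conditioning sets, or simply let the trivial path handle overlaps: if $a\in A\cap B$ and $a\notin C$, the trivial path connects $A$ and $B$ given $C$. The same observation handles all degenerate cases uniformly.

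Symmetry holds because paths in an undirected graph can be reversed. For decomposition, suppose there is a $C$-avoiding path from $A$ to $B$. Since $B\subseteq B\cup D$, the same path connects $A$ and $B\cup D$ given $C$. For weak union, a path avoiding $C\cup D$ in particular avoids $C$, and it ends in $B\subseteq B\cup D$. Each of these is a one-line containment argument.

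Contraction and intersection are the substantive cases, and they share one argument. Suppose $A$ and $B\cup D$ are connected given $C$. We take a $C$-avoiding path $\pi$ from some $a\in A$ to some $x\in B\cup D$, and choose it shortest, or equivalently truncate it at its first vertex lying in $B\cup D$. Then every vertex of $\pi$ other than the endpoint $x$ avoids $B\cup D\cup C$. We split into two cases.

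If $x\in D$, then $\pi$ is a $C$-avoiding path from $A$ to $D$. This contradicts $A\perp_{\mathcal G} D\mid C$ in the contraction case. In the intersection case, $\pi$ also avoids $B\cup C$ (its interior misses $B$, and the endpoint $x\in D$ must be checked). If $x\in B\cap D$ we may instead use the next case.

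If $x\in B$, then $\pi$ avoids $C\cup D$, provided $x\notin D$ (if $x\in B\cap D$, treat it as the first case for contraction). This contradicts $A\perp_{\mathcal G} B\mid C\cup D$. Likewise, in the intersection case, if $x\in D\setminus B$ then $\pi$ avoids $B\cup C$, contradicting $A\perp_{\mathcal G} D\mid B\cup C$.

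The main obstacle is the bookkeeping at the endpoints, where $x$ may lie in $B\cap D$ or $a$ may lie in a conditioning set. The truncation at the first hit of $B\cup D$ resolves this: it guarantees that the interior of $\pi$ avoids everything relevant, so only the endpoint needs checking. We handle the overlap $B\cap D$ by noting that such an $x$ lies in $D$ for contraction, and for intersection that $x\in B\cap D$ would be an element of the conditioning set $B\cup C$. The latter case requires the convention, under which the separation hypotheses are meaningful, that the separated sets are disjoint from the conditioning set; we would state this assumption explicitly, since the paper's definition leaves it implicit.
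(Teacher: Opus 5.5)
The paper gives no proof of this proposition (it only says it is ``not hard to show''), so there is no argument of the paper's to compare against. Your contrapositive path argument is the natural one, and it is correct. Symmetry, decomposition and weak union are immediate containments, exactly as you say. Truncating a $C$-avoiding path at its first vertex $x$ in $B\cup D$ is the right device:
\begin{itemize}
\item For contraction it settles every case with no disjointness assumption. If $x\in D$ (including $x\in B\cap D$), you contradict $A\perp_{\mathcal G} D\mid C$. If $x\in B\setminus D$, the path avoids $C\cup D$.
\item For intersection it settles the cases $x\in B\setminus D$ and $x\in D\setminus B$.
\end{itemize}

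Two comments on the remaining case, $x\in B\cap D$ for intersection.

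First, your two case paragraphs refer the reader to each other for this case (``use the next case'' versus ``provided $x\notin D$''). Only your final paragraph actually disposes of it. It would be cleaner to state the hypothesis $B\cap D=\emptyset$ up front and then split into $x\in B$ versus $x\in D$.

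Second, this hypothesis is not a mere convenience. Under the reading you adopt, where a path ``uses'' its endpoints, the intersection implication is false without it. Take $m=2$ with the single edge $1-2$, and set $A=\{1\}$, $B=D=\{2\}$, $C=\emptyset$.
\begin{itemize}
\item Both $A\perp_{\mathcal G} B\mid C\cup D$ and $A\perp_{\mathcal G} D\mid B\cup C$ hold vacuously, because every path ending at $2$ uses $2$, which lies in the conditioning set.
\item Yet the edge $1-2$ connects $A$ and $B\cup D$ given $\emptyset$.
\end{itemize}
So the proposition as stated, for arbitrary subsets, needs at least $B\cap D=\emptyset$ in the intersection item. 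Pairwise disjointness, as the paper imposes in its d-separation definition, would suffice. With that hypothesis added, your proof is complete.
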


One consequence of the conditional independence calculus in Proposition~\ref{prop: CI axioms}, and its graph-theoretic analog in Proposition~\ref{prop: UG axioms}, is that the three Markov properties for $\mathcal G$ given in Definition~\ref{def: undirected MPs} define the exact same subsets of $\mathbb D_X$. 
To the best of the author's knowledge, the following is a combination of results first published by Lauritzen \cite{Lauritzen1996} and Pearl and Paz \cite{pearlpaz}. 

\begin{theorem}
    \label{thm: undirected positive equality}
    Let $\mathcal G = ([m],E)$ be an undirected graph. Then 
    \[
    \mathcal M_P(\mathcal G) = \mathcal M_L(\mathcal G) = \mathcal M_G(\mathcal G).
    \]
\end{theorem}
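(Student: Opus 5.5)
We will prove the chain of inclusions $\mathcal M_G(\mathcal G)\subseteq \mathcal M_L(\mathcal G)\subseteq \mathcal M_P(\mathcal G)\subseteq \mathcal M_G(\mathcal G)$. The first two inclusions are straightforward consequences of the CI axioms in Proposition~\ref{prop: CI axioms}. The third is the substantive one, and it is where the Intersection axiom, which relies on the positivity built into $\mathbb D_X$, is needed.

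For $\mathcal M_G\subseteq\mathcal M_L$, we observe that for each $i\in[m]$ the set $\nbd(i)$ separates $\{i\}$ from $[m]\setminus\cnbd(i)$ in $\mathcal G$. Indeed, any path leaving $i$ must pass through a neighbor of $i$ as its first step. Hence every relation in $\CI_L(\mathcal G)$ already lies in $\CI_G(\mathcal G)$.

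For $\mathcal M_L\subseteq\mathcal M_P$, suppose $i-j\notin E$, so that $j\in[m]\setminus\cnbd(i)$. We start from $X_i\independent X_{[m]\setminus\cnbd(i)}\mid X_{\nbd(i)}$ and apply Weak Union, moving $[m]\setminus(\cnbd(i)\cup\{j\})$ into the conditioning set. This yields $X_i\independent X_j\mid X_{[m]\setminus\{i,j\}}$.

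For $\mathcal M_P\subseteq\mathcal M_G$, we fix $\mathbb P\in\mathcal M_P(\mathcal G)$ and disjoint $A,B,C$ with $A\perp_\mathcal G B\mid C$. (Overlapping sets are handled by a preliminary reduction, or by excluding them by convention.) We show $X_A\independent X_B\mid X_C$ by descending induction on $|C|$, following Pearl and Paz.

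\emph{Base case.} If $|C|=m-2$, then $A=\{a\}$ and $B=\{b\}$ are nonadjacent singletons, and the relation is pairwise.

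\emph{Case $A\cup B\cup C\neq[m]$.} First suppose $A\cup B\cup C=[m]$ but $|A|\ge2$ (the case $|B|\ge2$ is symmetric). Write $A=A_1\cup A_2$ with both parts nonempty. Then $A_1\perp_\mathcal G B\mid C\cup A_2$ and $A_2\perp_\mathcal G B\mid C\cup A_1$, by Weak Union in Proposition~\ref{prop: UG axioms}. Induction gives both CI statements, and Intersection followed by Symmetry assembles them into $X_A\independent X_B\mid X_C$.

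Otherwise $A\cup B\cup C\neq[m]$. Pick $v\notin A\cup B\cup C$. Since $C$ separates $A$ from $B$, $v$ cannot be connected (avoiding $C$) to both $A$ and $B$, so either $A\perp_\mathcal G B\cup\{v\}\mid C$ or $A\cup\{v\}\perp_\mathcal G B\mid C$. Say the former. Then $A\perp_\mathcal G B\mid C\cup\{v\}$ and $A\perp_\mathcal G \{v\}\mid C\cup B$ both hold, with conditioning sets strictly larger than $C$. Induction gives the corresponding CI statements, and Intersection yields $X_A\independent X_{B\cup\{v\}}\mid X_C$. Decomposition then finishes the step.

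The main obstacle is this inductive step. We must check that the graph-separation statements fed into the induction are genuinely implied (using the graph axioms of Proposition~\ref{prop: UG axioms}), that the conditioning sets strictly grow, and that sets remain disjoint and nonempty. This is the only place where positivity, through Intersection, enters.
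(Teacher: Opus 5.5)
Your proposal is correct and follows essentially the same route as the paper: the same chain $\mathcal M_G(\mathcal G)\subseteq\mathcal M_L(\mathcal G)\subseteq\mathcal M_P(\mathcal G)\subseteq\mathcal M_G(\mathcal G)$, with the last inclusion proved by descending induction on $|C|$ via Weak Union, Intersection and Decomposition. Your case-two disjunction ($A\perp_{\mathcal G} B\cup\{v\}\mid C$ or $A\cup\{v\}\perp_{\mathcal G} B\mid C$) is just a slightly cleaner packaging of the paper's two separation statements; the only fix needed is cosmetic, since your heading reads ``$A\cup B\cup C\neq[m]$'' while its first subcase treats $A\cup B\cup C=[m]$.
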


\begin{proof}
    If $\mathbb P \in \mathcal M_G(\mathcal G)$ then $X_A\independent X_B \mid X_C$ whenever $A$ and $B$ are separated given $C$ in $\mathcal G$. 
    Consider the choice of sets $A = \{i\}$, $B = [m]\setminus \cnbd(i)$ and $C = \nbd(i)$. 
    Then any path from $A$ to $B$ is a sequence of edges $(i_j - i_{j+1})_{j\in[s]}$ such that $i_1 = i$ and $i_{s+1}\in B$. 
    In particular, the first edge $i_1 - i_2$ is between $i$ and a neighbor of $i$; that is, $i_2\in \nbd(i) = C$. 
    Hence, there is no path from $A$ to $B$ in $G$ that does not use nodes in $C$. 
    Thus, $A$ and $B$ are separated given $C$ in $\mathcal G$. 
    It then follows from Definition~\ref{def: undirected GMs}~(3) that $X_A \independent X_B \mid X_C$. 
    Therefore, for every $i\in[m]$ we have $X_i\independent X_{[m]\setminus \cnbd(i)}\mid X_{\nbd(i)}$, which implies $\mathbb P\in \mathcal M_L(\mathcal G)$. 
    Thus, $\mathcal M_G(\mathcal G)\subseteq \mathcal M_L(\mathcal G)$.

    Now let $\mathbb P \in \mathcal M_L(\mathcal G)$, and let $i- j\notin E$. 
    To show that $\mathbb P \in \mathcal M_P(\mathcal G)$, we must show that $X_i\independent X_j\mid X_{[m]\setminus\{i,j\}}$. 
    Since $\mathbb P \in \mathcal M_L(\mathcal G)$, we know that $X_i\independent X_B \mid X_C$ where $B = [m]\setminus \cnbd(i)$ and $C = \nbd(i)$. 
    Note that $j\in B$ since $i - j\notin E$ is equivalent to $j\notin \nbd(i)$.  
    Thus, we may rewrite $B$ as the disjoint union of two sets $B = \{j\} \cup D$ where $D = [m]\setminus (\cnbd(i)\cup \{j\})$. 
    Hence, $X_i \independent X_{\{j\} \cup D} \mid C$, which implies $X_i\independent X_j \mid X_{C\cup D}$ by the weak union property in Proposition~\ref{prop: CI axioms}. 
    Since $C\cup D = \nbd(i)\cup [m]\setminus(\cnbd(i)\cup\{j\}) = [m]\setminus \{i,j\}$, it follows that $\mathcal M_L(\mathcal G)\subseteq \mathcal M_P(\mathcal G)$. 

    The above argument shows that $\mathcal M_G(\mathcal G)\subseteq \mathcal M_L(\mathcal G) \subseteq \mathcal M_P(\mathcal G)$. 
    To complete the proof, we show that $\mathcal M_P(\mathcal G)\subseteq \mathcal M_G(\mathcal G)$.  
    Let $\mathbb P\in \mathcal M_P(\mathcal G)$, and suppose that $A$ and $B$ are separated given $C$ in $\mathcal G$. 
    To prove the result, we must show that $X_A \independent X_B \mid X_C$ in $\mathbb P$. 
    Note that the CI relation $X_A \independent X_B \mid X_C$ is vacuously true whenever $A$ or $B$ is the empty set, so we assume that $A, B\neq \emptyset$. 
    
    Suppose first that $|C| = m - 2$. 
    It follows that $A = \{a\}$ and $B = \{b\}$ are both singleton sets, and $C$ must be equal to the set $[m]\setminus \{a, b\}$. 
    Hence, $X_A \independent X_B \mid X_C$ is the same statement as $X_a \independent X_b \mid X_{[m]\setminus \{a,b\}}$, which is one of the defining CI relations of the pairwise undirected Markov property for $\mathcal G$. 
    Thus, $X_A \independent X_B \mid X_C$ holds in $\mathbb P$. 

    Working by induction on $|C|$, we now assume that $X_A \independent X_B \mid X_C$ holds in $\mathbb P$ whenever $|C| > n$ and consider a choice of the sets $A,B,C$ where $|C| = n < m - 2$. 
    There are then two cases to consider: 

    Suppose first that $[m] = A\cup B \cup C$. 
    Since $|C| < m-2$, then without loss of generality, we may assume that $|A| > 1$. 
    Choose $a\in A$. 
    By the weak union property for undirected graphs in Proposition~\ref{prop: UG axioms}, we have that $A\setminus \{a\} \cup \{a\} \perp_{\mathcal G} B \mid C$ implies $A\setminus \{a\} \perp_{\mathcal G} B \mid C\cup \{a\}$ and $\{a\} \perp_{\mathcal G} B \mid C\cup A\setminus\{a\}$.  
    Since $|A| >1$ and $|C| = n$, we have that $|C\cup A \setminus\{a\}|, |C\cup \{a\}| > n$. 
    Hence, by the inductive hypothesis, $X_{A\setminus \{a\}} \independent X_B \mid X_{C\cup \{a\}}$ and $X_{a} \independent X_B \mid X_{C\cup A \setminus\{a\}}$ both hold in $\mathbb P$. 
    By the intersection axiom in Proposition~\ref{prop: CI axioms}, it follows that $X_A \independent X_B \mid X_C$ also holds in $\mathbb P$. 

    For the second case, suppose that there exists some $v\in [m] \setminus (A\cup B \cup C)$. 
    Since, by assumption, $A$ and $B$ are separated given $C$ in $\mathcal G$, then every path in $\mathcal G$ between a node in $A$ and a node in $B$ uses a node in $C$.  
    Therefore, every path between $A$ and $B$ in $\mathcal G$ uses a node in $C\cup \{v\}$. 
    Hence, $A$ and $B$ are separated given $C\cup\{v\}$ in $\mathcal G$. 
    Since $|C\cup \{v\}| > n$, then the inductive hypothesis implies that $X_A \independent X_B \mid X_{C\cup \{v\}}$ holds in $\mathbb P$. 

    To finish the proof, observe that one of the following separation statements must hold in $\mathcal G$: either $\{v\} \perp_{\mathcal G} B \mid A \cup C$ or $\{v\} \perp_{\mathcal G} A \mid B\cup C$.  
    Otherwise, there would be a path between $A$ and $B$ passing through $v$ that does not use any nodes in $C$, which would contradict our assumption $A \perp_{\mathcal G} B \mid C$. 
    We suppose, without loss of generality, that $\{v\} \perp_{\mathcal G} B \mid A \cup C$ holds. 
    Since$|A|,|B| \geq 1$ then  $|C \cup A| > n$, and the inductive hypothesis implies that $X_v \independent X_B \mid X_{A\cup C}$ holds in $\mathbb P$. 
    Since $X_v \independent X_B \mid X_{A\cup C}$ and $X_A \independent X_B \mid X_{C\cup \{v\}}$ hold in $\mathbb P$, then the intersection property in Proposition~\ref{prop: CI axioms} implies that $X_{A\cup \{ v\}} \independent X_B \mid X_C$ also holds in $\mathbb P$. 
    Applying the decomposition property in Proposition~\ref{prop: CI axioms} to this statement shows that $X_{A} \independent X_B \mid X_C$ in $\mathbb P$, which completes the proof. 
\end{proof}

\begin{remark}
    \label{rem: weakening of assumptions}
    Although we are operating under the assumption that our distributions $\mathbb P$ belong to  $\mathbb D_X$, it is not hard to see that Theorem~\ref{thm: undirected positive equality} can hold under weaker assumptions. 
    In particular, since the proof of Theorem~\ref{thm: undirected positive equality} only relies on the validity of the conditional independence calculus in Propositions~\ref{prop: CI axioms} and~\ref{prop: UG axioms}, all that matters is that the implied CI relations are well-defined in $\mathbb{P}$.  
    For a very general and rigorous formulation of when this can be done, see \cite{Dawid1980}.  

    For example, the positivity assumption $f_X(x)> 0$ for all $x\in \mathcal X$ in Definition~\ref{def: distribution set} for $\mathbb D_X$ is only needed to prove the intersection property in Proposition~\ref{prop: CI axioms}. 
    Hence, the chain of containments $\mathcal M_G(\mathcal G)\subseteq \mathcal M_L(\mathcal G) \subseteq \mathcal M_P(\mathcal G)$ holds even if $\mathbb D_X$ is relaxed to include distributions that are not positive on $\mathcal X$. 
\end{remark}

According to Theorem~\ref{thm: undirected positive equality}, it is now safe for us to refer to \emph{the} undirected conditional independence graphical model for $\mathcal G$.  

\begin{definition}
    \label{def: UG model}
    Let $\mathcal G = ([m],E)$ be an undirected graph. 
    The \emph{undirected conditional independence (CI) graphical model} for $\mathcal G$ is
    \[
    \mathcal M(\mathcal G) = \{\mathbb P\in \mathbb D_X : \mathbb P \textrm{ satisfies one of the three Markov properties in Definition~\ref{def: undirected MPs}}\}.
    \]
    The \emph{undirected CI graphical model family} is 
    \[
    \mathcal F_{\mathbb{UG}} = \{\mathcal M(\mathcal G) : \mathcal G \in \mathbb{UG}\}.
    \]
\end{definition}

It is important to note that the undirected CI graphical model $\mathcal M(\mathcal G)$ is specified with respect to a chosen sample space $\mathcal X = \prod_{i\in [m]}\mathcal X_i$, where the component sample spaces $\mathcal X_i$ are chosen to reasonably fit the data problem.  
In the framework we have provided here, $\mathcal M(\mathcal G)\subseteq \mathbb D_X$, where $\mathbb D_X$ could contain distributions defined with respect to different base measures $\mu$ on $\mathcal X$. 
In practice, it is most common that $\mathcal M(\mathcal G)$ is restricted to a subset of $\mathbb D_X$ for a fixed base measure $\mu$, and perhap even additional assumptions on the type of distributions. 
Hence, it is important to pay attention to the modeling assumptions when reading the literature on CI graphical models. 
A classic example are the undirected Gaussian graphical models. 

\begin{example}[Undirected Gaussian graphical models]
    \label{ex: UGGMs}
    Given an undirected graph $\mathcal G = ([m], E)$,
    let $\textrm{PD}_m$ denote the set of all $m\times m$ positive definite matrices, and let 
    \[
    \mathcal L_{\mathcal G} = \{K = (k_{ij})_{i,j=1}^m\in \textrm{PD}_m : k_{ij} = 0 \textrm{ if } i - j \notin E\}. 
    \]
    The \emph{undirected Gaussian graphical model} for $\mathcal G$ is the set of all multivariate normal distributions $\mathcal N(0, \Sigma)$ with mean $0\in \mathbb R^m$ and covariance matrix $\Sigma \in \textrm{PD}_m$ with concentration matrix belonging to $\mathcal {\mathcal G}$; that is, 
    \[
    \mathcal M_{\textrm{Gauss}}(\mathcal G) = \{\mathcal N(0,\Sigma) : \Sigma^{-1} \in \mathcal L_{\mathcal G}\}. 
    \]
    It is an exercise to show that $\mathcal N(0,\Sigma)\in \mathcal M_{\textrm{Gauss}}(\mathcal G)$ if and only if the distribution $\mathcal N(0,\Sigma)$ for $X = (X_i)_{i\in[m]}$ satisfies $X_i \independent X_j \mid X_{[m]\setminus \{i,j\}}$ for all $i - j \notin E$. 
    In particular, $\mathcal M_{\textrm{Gauss}}(\mathcal G) \subset \mathcal M(\mathcal G)$.  
    It is also common in the literature to drop the subscript notation when it is perceived to be clear that the distributions being considered are Gaussian. 

    We have now hit a point where we can explore real data examples.  
    For instance, we can consider the data set used to model the protein signaling network in Example~\ref{ex: sachs network}. 
    If we want to model the unknown data-generating distribution with a multivariate Gaussian distribution, and we are willing to assume that this unknown distribution abides by the pairwise Markov property in Definition~\ref{def: undirected MPs}, then performing Pearson correlation tests to evaluate the null hypotheses $H_0: X_i \independent X_j \mid X_{[m]\setminus\{i,j\}}$ with threshold $\alpha = 0.05$ suggests that the dependence structure of the distribution is represented by the following graph. 
    \begin{center}
    \begin{tikzpicture}[thick]
    \def\n{11} 
    \def\radius{2.5cm}

    \foreach \name [count=\i] in {PKA, PKC, PIP3, Mek, Raf, PIP2, Plcg, Jnk, P38, Akt, Erk}{
        
        % Calculate angle evenly divided by the number of nodes (6 nodes total)
        \pgfmathsetmacro{\angle}{(\i-1) * (360/\n)}
        
        % Draw the node using polar coordinates: (angle:radius)
        % (node_id) is set to lower-case index-based string or name reference
        \node[circle, inner sep=1pt, minimum width=1pt] (\name) at (\angle:\radius) {\name};
    }

    % Example: You can now reference the specific names directly to draw paths!
    \draw[-, thick] (PKA) -- (PIP3);
    \draw[-, thick] (PKA) -- (PIP2);
    \draw[-, thick] (PKA) -- (Jnk);

    \draw[-, thick] (PKC) -- (Akt);
    \draw[-, thick] (PKC) -- (Erk);
    \draw[-, thick] (PKC) -- (PIP3);
    \draw[-, thick] (PKC) -- (PIP2);
    \draw[-, thick] (PKC) -- (Mek);
    \draw[-, thick] (PKC) -- (Raf);

    \draw[-, thick] (P38) -- (Raf);
    \draw[-, thick] (P38) -- (Mek);
    \draw[-, thick] (P38) -- (PIP2);
    \draw[-, thick] (P38) -- (PIP3);

    \draw[-, thick] (Jnk) -- (Akt);
    \draw[-, thick] (Jnk) -- (Erk);
    \draw[-, thick] (Jnk) -- (PIP2);

    \draw[-, thick] (Raf) -- (PIP2);

    \draw[-, thick] (Mek) -- (PIP2);

    \draw[-, thick] (PIP2) -- (Akt);
    \draw[-, thick] (PIP2) -- (Erk);

    \draw[-, thick] (PIP3) -- (Erk);

    \end{tikzpicture}
    \end{center}
    While we have used the pairwise Markov property to infer this dependence structure from the data, Theorem~\ref{thm: undirected positive equality} implies that
    \[
    \mathcal M_{\textrm{Gauss}}(\mathcal G) = \mathcal M_P^{\textrm{Gauss}}(\mathcal G) = \mathcal M_L^{\textrm{Gauss}}(\mathcal G) = \mathcal M_G^{\textrm{Gauss}}(\mathcal G), 
    \]
    Where the latter sets denote the, respective, subsets of $\mathcal M_P(\mathcal G), \mathcal M_L(\mathcal G)$ and $\mathcal M_G(\mathcal G)$, containing only Gaussian distributions.
    In particular, our model implies we can read more complex conditional independence relations from the graph that should also hold for the data-generating distribution. 
    For example, the global Markov property implies that the joint abundance levels of molecules Akt and PKA are independent of those for Erk and Mek given measurements for PKC, JNK, PIP2 and PIP3.
\end{example}

Comparing the estimated graph in Example~\ref{ex: UGGMs} with the biologically accepted ground-truth network in Example~\ref{ex: sachs network}, we see that our model is perhaps not the best.  
This is reasonable, since it is unlikely that the Gaussianity assumption is a good match to the unknown distribution.  
At a deeper level, the dynamics of dependencies in a protein network are, more often than not, directed in a causal sense. 
The coming sections will provide us with a theoretical framework for parsing out models where directed dependence relations are a better fit to the data. 

This small example should, however, not be interpreted as suggesting a limited applicability of the family of undirected CI graphical models $\mathcal F_{\mathbb{UG}}$. 
In fact, Gaussian undirected graphical models are one of the most popular models in high-dimensional statistics, thanks to the success of methods such as the \emph{graphical lasso} \cite[Section 26.7.2]{murphy}. 

One final feature to note about the undirected CI graphical model $\mathcal M(\mathcal G)$ is that it is defined using only conditional independence constraints. 
In particular, the model is specified \emph{implicitly}, without using a parameterization.  
While lacking a parameterization, the CI graphical model $\mathcal M(\mathcal G)$ does allow for a factorization of its densities using a set of \emph{potential functions}. 
This factorization arises in the following way. 

For an undirected graph $\mathcal G = ([m], E)$, we define a \emph{clique} in $\mathcal G$ to be a subset $C\subseteq [m]$ satisfying $i - j\in E$ for all pairs $i,j\in C$. 
We say that a clique $C$ is \emph{maximal} in $\mathcal G$ if there is no clique $C'$ in $\mathcal G$ satisfying $C\subsetneq C'$. 
Let $\mathcal C(\mathcal G)$ denote the set of all maximal cliques in $\mathcal G$. 

\begin{example}
    [Cliques in an undirected graph]
    Let $\mathcal G$ be the following undirected graph. 
    \begin{center}
    \begin{tikzpicture}[thick, scale=0.6]
    \node[circle, draw, inner sep=1pt, minimum width=1pt] (1) at (0,0)  {$1$};
    \node[circle, draw, inner sep=1pt, minimum width=1pt] (2) at (2,0) {$2$};
    \node[circle, draw, inner sep=1pt, minimum width=1pt] (3) at (0,-2) {$3$};
    \node[circle, draw, inner sep=1pt, minimum width=1pt] (4) at (2,-2) {$4$};
    
    \draw[-, very thick] (1) -- (2);
    \draw[-, very thick] (1) -- (3);
    \draw[-, very thick] (2) -- (4);
    \draw[-, very thick] (3) -- (4);
    \draw[-, very thick] (3) -- (2);
    \end{tikzpicture}
    \end{center}
    Examples of cliques in $\mathcal G$ include
    \[
    C_1 = \{1\}, \qquad C_2 = \{2,4\}, \qquad C_3 = \{1,2,3\}, \qquad C_4 = \{2,3,4\}. 
    \]
    since $\mathcal G$ contains the subgraphs
    \begin{center}
    \begin{tikzpicture}[thick, scale=0.6]
    \node[circle, draw, inner sep=1pt, minimum width=1pt] (1) at (0+4,0-1)  {$1$};

    \node[circle, draw, inner sep=1pt, minimum width=1pt] (a2) at (2 + 6,0) {$2$};
    \node[circle, draw, inner sep=1pt, minimum width=1pt] (a4) at (2 + 6,-2) {$4$};
    
    \draw[-, very thick] (a2) -- (a4);

    \node[circle, draw, inner sep=1pt, minimum width=1pt] (b1) at (0 + 6 + 6,0)  {$1$};
    \node[circle, draw, inner sep=1pt, minimum width=1pt] (b2) at (2 + 6 + 6,0) {$2$};
    \node[circle, draw, inner sep=1pt, minimum width=1pt] (b3) at (0 + 6 + 6,-2) {$3$};
    
    \draw[-, very thick] (b1) -- (b2);
    \draw[-, very thick] (b1) -- (b3);
    \draw[-, very thick] (b3) -- (b2);

    \node[circle, draw, inner sep=1pt, minimum width=1pt] (c2) at (2 + 6 + 6 + 4,0) {$2$};
    \node[circle, draw, inner sep=1pt, minimum width=1pt] (c3) at (0 + 6 + 6 + 4,-2) {$3$};
    \node[circle, draw, inner sep=1pt, minimum width=1pt] (c4) at (2 + 6 + 6 + 4,-2) {$4$};
    
    \draw[-, very thick] (c2) -- (c4);
    \draw[-, very thick] (c3) -- (c4);
    \draw[-, very thick] (c3) -- (c2);
    \end{tikzpicture}
    \end{center}
    in which each pair of nodes is connected by an edge. 
    Note that $\mathcal G$ contains exactly two maximal cliques, which are $C_3$ and $C_4$. 
\end{example}

\begin{definition}
    \label{def: potential function}
    Let $m$ be a positive integer and $C\subseteq [m]$. 
    A function $\phi_C: \mathcal X_C\to \mathbb R$ is called a \emph{potential function} if $\phi_C(x_c)\geq 0$ for all $x_C\in \mathcal X_C$. 
\end{definition}

The idea behind  potential functions is to represent local contributions of the nodes in a clique $C$ -- which are mutually dependent upon each other -- to the density $f_X(x)$ of the distribution $\mathbb P$ on $X = (X_i)_{i\in[m]}$. 
Doing so can afford us several statistical advantages, especially when $\mathcal G$ is relatively sparse.  
These include, for example, improved efficiency in estimating the density $f_X(x)$ via local estimations of the potential functions $\phi_C(x_C)$. 
For more on this in the case of discrete variables $X_i$, see \cite[Chapters 9,10]{KF}. 
We now make a formal definition capturing when the density $f_X(x)$ can be reduced to a considering less cumbersome potential functions on cliques. 

\begin{definition}
    \label{def: undirected factorization}
    Let $\mathcal G = ([m],E)$ be an undirected graph. 
    A distribution $\mathbb P\in \mathbb D_X$ with density $f_X(x)$ \emph{factorizes} according to $\mathcal G$ if there exist potential functions $\phi_C(x_C)$ for all $C\in \mathcal C(\mathcal G)$ such that 
    \[
    f_X(x) = \frac{1}{Z}\prod_{C\in \mathcal C(\mathcal G)}\phi_C(x_C) \qquad \textrm{ for all } x\in \mathcal X, 
    \]
    where 
    \[
    Z = \int_{\mathcal X}\prod_{C\in \mathcal C(\mathcal G)}\phi_C(x_C)dx.
    \]
    and the integral is taken with respect to the appropriate base measure for $\mathbb P$. 
\end{definition}

\begin{remark}
    \label{rem: nonmaximal cliques}
    Let $\mathcal C'(\mathcal G)$ denote a subset of (possibly nonmaximal) cliques in $\mathcal G$ with the property that for every edge $i - j$ of $\mathcal G$ there is at least one clique $C \in \mathcal C'$ such that $i,j\in C$. 
    Note that a distribution $\mathbb P \in \mathbb D_X$ with density $f_X(x)$ factorizes according to $\mathcal G$ if and only if there exist potential functions $\phi'_C(x_C)$ for all $C\in \mathcal C'(\mathcal G)$ such that 
    \[
    f_X(x) = \frac{1}{Z'}\prod_{C\in \mathcal C'(\mathcal G)}\phi_C(x_C) \quad \textrm{ for all } x\in \mathcal X,
    \textrm{where}
    \quad
    Z' = \int_{\mathcal X}\prod_{C\in \mathcal C'(\mathcal G)}\phi_C(x_C)dx.
    \]
    In other words, Definition~\ref{def: undirected factorization} can be equivalently stated using all cliques in $\mathcal G$, but for most purposes it is more efficient to use only the maximal cliques. 
\end{remark}

Using the very general framework of potential functions, it is possible to provide a factorization formula for the density of any distribution in any undirected CI graphical model $\mathcal M(\mathcal G)$. 
Namely, the following famous theorem states that all distributions in our undirected CI graphical model for $\mathcal G$ can be factorized using potential functions. 

\begin{theorem}
    [Hammersley-Clifford Theorem]
    \label{thm: hammersley-clifford}
    Let $\mathcal G = ([m], E)$ be an undirected graph, and let $\mathbb P\in \mathbb D_X$.  Then $\mathbb P$ factorizes according to $\mathcal G$ if and only if $P \in \mathcal M(\mathcal G)$.
\end{theorem}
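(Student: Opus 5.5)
We prove the two implications separately. By Theorem~\ref{thm: undirected positive equality} it suffices to show that factorization implies the global Markov property, and that the pairwise Markov property implies factorization.

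\emph{Factorization implies the global Markov property.} Suppose $f_X = Z^{-1}\prod_{C\in\mathcal C(\mathcal G)}\phi_C$, and let $A\perp_{\mathcal G} B\mid C$ with $A,B,C$ disjoint. We first enlarge the sets. Let $\tilde A$ be the union of $A$ with all nodes of $[m]\setminus C$ that are connected to $A$ by a path avoiding $C$, and set $\tilde B=[m]\setminus(\tilde A\cup C)$. Then $B\subseteq\tilde B$, since $A$ and $B$ are separated by $C$.

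Every clique $K$ lies in $\tilde A\cup C$ or in $\tilde B\cup C$. Otherwise $K$ would contain adjacent nodes $a\in\tilde A$ and $b\in\tilde B$, and then $b$ would be reachable from $A$ while avoiding $C$, so $b\in\tilde A$. Grouping the factors accordingly gives
\[
f_X(x)=g(x_{\tilde A},x_C)\,h(x_{\tilde B},x_C).
\]
A density of this form satisfies $X_{\tilde A}\independent X_{\tilde B}\mid X_C$. To see this, integrate out $x_{\tilde A}$ and $x_{\tilde B}$ to obtain the marginals, and check the defining product identity for conditional densities; positivity makes all the denominators nonzero. Applying decomposition (Proposition~\ref{prop: CI axioms}) twice, together with symmetry, then yields $X_A\independent X_B\mid X_C$. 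Hence $\mathbb P\in\mathcal M_G(\mathcal G)=\mathcal M(\mathcal G)$.

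\emph{Pairwise Markov implies factorization.} This is the main obstacle, and it is the only place positivity is essential. Fix a reference point $x^*\in\mathcal X$. Positivity lets us define, for each $S\subseteq[m]$,
\[
H_S(x)=\log f_X(x_S,x^*_{[m]\setminus S}),
\]
where the argument agrees with $x$ on $S$ and with $x^*$ elsewhere. Define the interaction terms by M\"obius inversion,
\[
\psi_S(x)=\sum_{T\subseteq S}(-1)^{|S\setminus T|}H_T(x).
\]
Each $\psi_S$ depends only on $x_S$. M\"obius inversion gives $\log f_X(x)=H_{[m]}(x)=\sum_{S\subseteq[m]}\psi_S(x_S)$.

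The key claim is that $\psi_S\equiv 0$ whenever $S$ is not a clique. Suppose $i,j\in S$ with $i-j\notin E$, and set $W=S\setminus\{i,j\}$. Pairing the subsets of $S$ by how they meet $\{i,j\}$ gives
\[
\psi_S=\sum_{T\subseteq W}(-1)^{|W\setminus T|}\bigl(H_{T\cup\{i,j\}}-H_{T\cup\{i\}}-H_{T\cup\{j\}}+H_T\bigr).
\]
Let $U=[m]\setminus\{i,j\}$. The pairwise relation $X_i\independent X_j\mid X_U$ says that $f_X=f_{X_i\mid X_U}\,f_{X_j\mid X_U}\,f_{X_U}$. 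Consequently $\log f_X(x_i,x_j,x_U)$ is a sum of a function of $(x_i,x_U)$ and a function of $(x_j,x_U)$. Each bracketed term is exactly a second difference in the $i$ and $j$ coordinates, with $x_U$ held fixed at the same value throughout, namely $x_T$ on $T$ and $x^*$ elsewhere. A second difference of such a sum vanishes, so every bracket is zero and $\psi_S\equiv 0$.

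It follows that $\log f_X$ is a sum of $\psi_S$ over cliques $S$. Assign each clique to one maximal clique containing it and set $\phi_C=\exp\bigl(\sum\psi_S\bigr)$, the sum running over the cliques assigned to $C$. Each $\phi_C$ is positive, hence a potential function. Then $f_X=\prod_{C\in\mathcal C(\mathcal G)}\phi_C$ with $Z=1$, which is the desired factorization.

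The care needed is only in the bookkeeping of the inclusion--exclusion sums; measurability of the resulting $\phi_C$ follows since each is built from $f_X$ evaluated at coordinate substitutions.
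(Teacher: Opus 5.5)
Your proof is correct and is essentially the standard argument in Lauritzen's \emph{Graphical Models}; the paper gives no proof of its own and simply refers the reader there. As in that source, factorization gives the global Markov property by enlarging $A$ to the components of $\mathcal G_{[m]\setminus C}$ that meet $A$ and then applying decomposition, and the pairwise property gives factorization by M\"obius inversion of $\log f_X$ about a reference point $x^*$, with Theorem~\ref{thm: undirected positive equality} closing the loop. The one unstated step is your reduction to pairwise disjoint $A,B,C$, and it is routine: separation forces $A\cap B\subseteq C$ and yields $A\setminus C\perp_{\mathcal G} B\setminus C\mid C$, and then $X_{A\setminus C}\independent X_{B\setminus C}\mid X_C$ implies $X_A\independent X_B\mid X_C$.
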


A proof of Theorem~\ref{thm: hammersley-clifford} can be found in the book \emph{Graphical Models} by Lauritzen \cite{Lauritzen1996}. 

\begin{example}[Zero field Ising model]
    \label{ex: Ising model parameterization}
    Perhaps the most fundamental example of an undirected CI graphical model is the Ising model from Example~\ref{ex: undirected grid}.  
    In its original, physical context, we attach binary random variables $X_i$ to each node $i$ in the grid graph where $X_i$ takes values in the set $\{+1,-1\}$ representing the possibilities of positive or negative atomic spins. 
    For the grid graph $\mathcal G = (V, E)$ in Example~\ref{ex: undirected grid}, it is easy to see that the maximal cliques are given by the edges of $\mathcal G$:
    \[
    \mathcal C(\mathcal G) = \{\{i,j\} : i - j\in E\}. 
    \]
    The (zero field) Ising model then associates a coupling constant $J_{ij}\in \mathbb R$ to each maximal clique $\{i,j\}$, creating a potential function $\phi_{\{i,j\}}(x_i,x_j) = e^{-J_{ij}x_ix_j}$. 
    The behavior of the atomic system is described via the Boltzmann distribution
    \[
    f_X(x) = \frac{1}{Z}\prod_{\{i,j\}\in\mathcal C(\mathcal G)}\phi_{\{i,j\}}(x_i,x_j),
    \]
    where $Z = \sum_{x\in\{+1,-1\}^V}\prod_{\{i,j\}\in\mathcal C(\mathcal G)}\phi_{\{i,j\}}(x_i,x_j)$ is the \emph{partition function}.
    Since this distribution factorizes according to $\mathcal G$, Theorem~\ref{thm: hammersley-clifford} implies that CI relations holding among the variables can be easily read from the combinatorial structure of the graph $\mathcal G$.  
\end{example}

The Hammersley-Clifford Theorem admits several generalizations that apply to distributions that may not belong to the family $\mathbb D_X$. 
However, the statement in its classical form (as given in Theorem~\ref{thm: hammersley-clifford}), is often sufficient to cover most modern multivariate data scenarios.

\subsection{Conditional independence DAG models}
\label{subsubsec: DAG models}

One of the most popular types of CI graphical models are built upon \emph{directed acyclic graphs} (DAGs). 
This is because the directed edges (unlike undirected) admit an intuitive \emph{causal} interpretation, as discussed in Section~\ref{subsec: graphs}, and the acyclicity tends to allow for simple, recursive proofs of valuable theorems.  

Causal models are of increasing importance in data-driven society, largely due to access to new data-scenarios in which one can ethically perform Randomized Controlled Trials (RCTs). 
Historically, RCTs were difficult to perform since they require a test and control group, but treating one group with a potentially life-altering treatment while not treating the other group raises serious ethical questions.  
Now, as described in the introduction, there is a wealth of opportunities for performing low-risk RCTs within the internet-based market place since, for example, there are generally no major ethical qualms in showing one customer one version of your app and a different customer an alternative (test) version. 
Hence, the major technological developments of the early 21st century have lead to a renaissance for research in causal modeling. 
At the heart of these activities are the conditional independence DAG models, which we will now introduce. 

Similar to undirected CI graphical models, we will define CI models for DAGs by way of the standard methods for specifying a Markov property outlined in Section~\ref{subsec: markov properties}. 
In this case, we would like to build a causal intuition into how we consider the CI relations associated to the absence of directed edges, the direction of the edges connecting neighbors, and our global notion of (directed) connectedness within the graph. 
By doing this appropriately, the CI relations encoded by the DAG can be viewed as encoding rudimentary (probabilistic) properties that would naturally hold in a causal model. 
In this way, our CI DAG models will become a (probabilistic) basis for causal features in a data-generating process. 

\begin{definition}[Markov properties for DAGs]
    \label{def: DAG MPs}
    Let $\mathcal G = ([m], E)$ be a DAG. 
    \begin{enumerate}
        \item The \emph{directed pairwise Markov property} for $\mathcal G$  is the set of CI relations
        \[
        \CI_P(\mathcal G) = \{X_i \independent X_j \mid X_{\nd(i)\setminus\{j\}}: j\in \nd(i) \textrm{ and } j\to i, i\to j\notin E\}. 
        \]
        \item The \emph{directed local Markov property} for $\mathcal G$ is the set of CI relations
        \[
        \CI_L(\mathcal G) = \{X_i \independent X_{\nd(i)\setminus \pa(i)} \mid X_{\pa(i)}: i \in [m]\}. 
        \]
        \item The \emph{directed global Markov property} for $\mathcal G$ is the set of CI relations
        \[
        \CI_G(\mathcal G) = \{ X_A \independent X_B \mid X_C : A \textrm{ and } B \textrm{ are d-separated given } C \textrm{ in } \mathcal G\}. 
        \]
    \end{enumerate}
\end{definition}

Note that the CI relations in the directed local Markov property are a very natural probabilistic consequence of the DAG $\mathcal G$ being a causal model.  
The parents $j\in \pa(i)$ are the nodes in $\mathcal G$ for which there is an edge $j\to i$ in $\mathcal G$.  
Thinking causally, we would interpret the parents of $i$ as the \emph{direct effects} of $i$.  
Similarly, the nondescendants of $i$, $\nd(i)$, are all the nodes $j$ that are \emph{not} reachable from $i$ via a directed path $i\to \cdots \to j$. In other words, the nondescendants of $i$ are all nodes that are not indirectly effected by $i$.  
So the statement $X_i \independent X_{\nd(i)\setminus \pa(i)} \mid X_{\pa(i)}$ can be viewed as capturing a probabilistic consequence of the causal structure $\mathcal G$: \emph{When the direct effects of $X_i$ are measured, the distribution of $X_i$ is independent of all other variables that are not directly or indirectly effected by $X_i$.}

Similarly, the statement $X_i \independent X_j \mid X_{\nd(i)\setminus\{j\}}$ for $j\in \nd(i)$ in the pairwise local Markov property captures a different probabilistic consequence of the causal structure $\mathcal G$: \emph{If $X_j$ and $X_i$ are not related by a direct effect then their distributions should be independent when we are given measurements of all variables not effected by $X_i$.}

\begin{remark}[Probabilistic versus causal modeling]
We emphasis that a distribution $\mathbb P\in \mathbb D_X$ satisfying the Markov properties in Definition~\ref{def: DAG MPs} for a DAG $\mathcal G$ is \emph{not} a causal model.  Instead, it is a distribution that satisfies intuitive probabilistic consequences (e.g. certain CI relations) that would naturally be satisfied if the data-generating process has underlying causal dependence structure given exactly by the DAG $\mathcal G$. 
This includes the assumption that every variable that could be effecting any variable in the system is represented by a node in $\mathcal G$, an assumption referred to as \emph{causal sufficiency} \cite{pearl2009}. 
Typically, more work, or additional assumptions, are required to obtain a causal model based on $\mathcal G$. 
\end{remark}

The global Markov property can similarly be viewed as a set of CI relations that are probabilistic consequences of the data-generating process having underlying causal structure given by $\mathcal G$. 
Comparing Definition~\ref{def: DAG MPs} with the Markov properties for undirected graphs in Definition~\ref{def: undirected MPs}, we see that the global Markov property for DAGs substitutes the notion of separation in undirected graphs with a notion of \emph{directed} separation, or d-separation for short. 

\begin{definition}[d-separation in DAGs]
    \label{def: d-separation}
    Let $\mathcal G = ([m], E)$ be a DAG, $i,j\in [m]$ and $C\subseteq [m]$. 
    We say that $i$ and $j$ are \emph{d-connected} given $C$ in $\mathcal G$ if there exists a path $\rho = (v_k)_{k\in [s]}$ with $v_1 = i$ and $v_{s} = j$ such that
    \begin{enumerate}
        \item if $\rho$ contains the edges $v_{k-1} \to v_k \to v_{k+1}$, $v_{k-1} \leftarrow v_k \to v_{k+1}$ or $v_{k-1} \leftarrow v_k \leftarrow v_{k+1}$ for some $k\in[s]$ then $v_k\notin C$, and
        \item if $\rho$ contains $v_{k-1} \to v_k \leftarrow v_{k+1}$ for some $k\in [s]$ then $v_k\in an(C)$.
    \end{enumerate}
    Let $A,B,C\subseteq[m]$ be pairwise disjoint with $A,B\neq \emptyset$. 
    We say that $A$ and $B$ are \emph{d-separated} given $C$ in $\mathcal G$, denoted $A \perp_{\mathcal G} B \mid C$, if there is no $i \in A$ and $j\in B$ that are d-connected given $C$. 
    Otherwise, we say $A$ and $B$ are \emph{d-connected} given $C$ in $\mathcal G$, and denote it by $A \not\perp_{\mathcal G} B \mid C$. 
\end{definition}

\begin{example}
    \label{ex: d-sep genes}
    Let $\mathcal G = (V, E)$ be the following DAG on node set $V = \{1,2,3,4\}$. 
    \begin{center}
    \begin{tikzpicture}[thick, scale=0.6]
    \node[circle, draw, inner sep=1pt, minimum width=1pt] (1) at (0,0)  {$1$};
    \node[circle, draw, inner sep=1pt, minimum width=1pt] (2) at (2,0) {$2$};
    \node[circle, draw, inner sep=1pt, minimum width=1pt] (3) at (1,-1.5) {$3$};
    \node[circle, draw, inner sep=1pt, minimum width=1pt] (4) at (1,-3) {$4$};
    
    \draw[->, very thick] (1) -- (3);
    \draw[->, very thick] (2) -- (3);
    \draw[->, very thick] (3) -- (4);
    \end{tikzpicture} 
    \end{center}
    We imagine a model in which a variable $X_i$ records the genetic information of individual $i$, with $i=1$ being a (biological) mother, $i=2$ a (biological) father, $i=3$ their (biological) child and $i=4$ their (biological) grandchild. 
    The logic of d-connection/d-separation is that it captures how information could flow through this network via conditioning on available knowledge.  
    For example, provided with only genetic information $X_1$ about the mother, we should not be able to deduce any genetic information $X_2$ about the father.  This makes sense in the graph because the only path connecting the parents $1$ and $2$ flow downward, and intuitively we should not be able to pass information backwards along directed edges. 
    Indeed, $1$ and $2$ are d-separated in $\mathcal G$ when no additional information is provided (i.e. $C = \emptyset$). 

    On the other hand, given genetic information about the child in the form of an observation $X_3 = x_3$, when measuring genetic information about the mother $X_1$ we can compare our observations with the information in $X_3 = x_3$. 
    Features in $X_3 = x_3$ that are not apparent from the mother likely are coming from the father $X_2$.  
    Hence, $X_1$ and $X_2$ become dependent given observations from $X_3$. 
    This is combinatorially captured by the fact that $1$ and $2$ are d-connected given $C = \{3\}$ in $\mathcal G$. 

    Similarly, genetic information about the mother $X_1$ will naturally inform us about the genetics of the grandchild $X_4$, and $1$ and $4$ are d-connected given $C = \emptyset$. 
    However, provided with the genetic information of the child $X_3= x_3$, we can only learn as much about the grandchild $X_4$ from the mother $X_1$ that we already know from our observation $X_3 = x_3$ from the intermediary child. 
    Correspondingly, $1$ and $4$ are d-separated given $C = \{3\}$ in $\mathcal G$. 
\end{example}

\begin{example}
    \label{ex: d-sep 2}
    Let $\mathcal G = (V,E)$ be the following DAG on vertex set $V = \{1,2,3,4,5,6\}$. 
    \begin{center}
    \begin{tikzpicture}[thick, scale=0.6]
    \node[circle, draw, inner sep=1pt, minimum width=1pt] (1) at (0,0)  {$1$};
    \node[circle, draw, inner sep=1pt, minimum width=1pt] (2) at (2,0) {$2$};
    \node[circle, draw, inner sep=1pt, minimum width=1pt] (3) at (0,-2) {$3$};
    \node[circle, draw, inner sep=1pt, minimum width=1pt] (4) at (2,-2) {$4$};
    \node[circle, draw, inner sep=1pt, minimum width=1pt] (5) at (4,-2) {$5$};
    \node[circle, draw, inner sep=1pt, minimum width=1pt] (6) at (3,-3.5) {$6$};
    
    \draw[->, very thick] (1) -- (2);
    \draw[->, very thick] (1) -- (3);
    \draw[->, very thick] (2) -- (4);
    \draw[->, very thick] (3) -- (4);
    \draw[->, very thick] (4) -- (5);
    \draw[->, very thick] (4) -- (6);
    \draw[->, very thick] (5) -- (6);
    \end{tikzpicture}
    \end{center}
    The following d-separation holds in $\mathcal G$: 
    $
    3 \perp_{\mathcal G} 2 \mid 1,
    $
    as do the following d-connections
    \[
    3 \not\perp_{\mathcal G} 2, \qquad 3 \not\perp_{\mathcal G} 2 \mid \{1,4\}, \qquad 3 \not\perp_{\mathcal G} 2 \mid \{1,6\}.
    \]
    The final d-connection here follows from Definition~\ref{def: d-separation}~(2) since $4$ is an ancestor of $6$. 
    Similarly, we have
    \[
    \{1,2\} \perp_{\mathcal G} \{5,6\} \mid \{3,4\} \qquad \textrm{and} \qquad \{1,2\} \not\perp_{\mathcal G} \{5,6\} \mid 3. 
    \]    
\end{example}

Following the principle of Markov properties outlined in Section~\ref{subsec: markov properties}, we can define a CI graphical model for every Markov property in Definition~\ref{def: DAG MPs}. 

\begin{definition}
    \label{def: DAG GMs}
    Let $\mathcal G = ([m], E)$ be a DAG. 
    \begin{enumerate}
        \item The \emph{pairwise CI DAG model} for $\mathcal G$ is
        \[
        \mathcal M_P(\mathcal G) = \{ \mathbb P \in \mathbb D_X : \mathbb P \textrm{ satisfies all CI relations in } \CI_P(\mathcal G)\}. 
        \]
        \item The \emph{local CI DAG model} for $\mathcal G$ is
        \[
        \mathcal M_L(\mathcal G) = \{ \mathbb P \in \mathbb D_X : \mathbb P \textrm{ satisfies all CI relations in } \CI_L(\mathcal G)\}. 
        \]
        \item The \emph{global CI DAG model} for $\mathcal G$ is
        \[
        \mathcal M_G(\mathcal G) = \{ \mathbb P \in \mathbb D_X : \mathbb P \textrm{ satisfies all CI relations in } \CI_G(\mathcal G)\}. 
        \]
    \end{enumerate}
\end{definition}

Similar to undirected CI graphical models, we have a valid notion of potential functions for DAGs.  
In this case, we gain advantage from the recursive nature of a directed acyclic graph, and the potential functions are simply conditional distributions. 

\begin{definition}
    \label{def: DAG factorization}
    A distribution $\mathbb P \in \mathbb D_X$ with density $f_X(x)$ \emph{factorizes} according to the DAG $\mathcal G = ([m], E)$ if 
    \[
    f_X(x) = \prod_{i=1}^m f_{X_i}(x_i \mid x_{\pa(i)}) \qquad \textrm{for all } x\in \mathcal X. 
    \]
    This yields a fourth CI DAG model 
    \[
    \mathcal M_F(\mathcal G) = \{\mathbb P \in \mathbb D_X : \mathbb P \textrm{ factorizes according to } \mathcal G\}. 
    \]
\end{definition}

Much like we did for undirected CI graphical models with Theorem~\ref{thm: undirected positive equality} and the Hammersley-Clifford Theorem (Theorem~\ref{thm: hammersley-clifford}), we will prove the following equality of models
\begin{equation}
\label{eqn: directed equalities}
\mathcal M_L(\mathcal G) = \mathcal M_G(\mathcal G) = \mathcal M_F(\mathcal G). 
\end{equation}
Note that, unlike undirected CI graphical models, we do not include the pairwise model $\mathcal M(\mathcal G)$ in the above chain of equalities.  
In fact, for directed acyclic graphs, we have the following observation. 

\begin{proposition}
    \label{prop: pairwise relation DAGs}
    Let $\mathcal G = ([m], E)$ be a DAG. 
    Then
    \[
    \mathcal M_L(\mathcal G) \subsetneq \mathcal M_P(\mathcal G). 
    \]
\end{proposition}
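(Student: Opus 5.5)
The plan has two parts: prove the containment $\mathcal M_L(\mathcal G)\subseteq \mathcal M_P(\mathcal G)$ with the conditional independence calculus of Proposition~\ref{prop: CI axioms}, then show it is strict with a small explicit counterexample. The second part is where the difficulty lies, and it depends on a convention in the paper's definition of $\nd(i)$.

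For the containment, let $\mathbb P\in\mathcal M_L(\mathcal G)$ and fix $i\in[m]$ and $j\in\nd(i)$ with $i\to j,\ j\to i\notin E$. Since $j$ is not adjacent to $i$, we have $j\notin\pa(i)$, so $j\in B:=\nd(i)\setminus\pa(i)$. Write $B=\{j\}\cup D$ with $D=\nd(i)\setminus(\pa(i)\cup\{j\})$. The local relation $X_i\independent X_{\{j\}\cup D}\mid X_{\pa(i)}$ then gives $X_i\independent X_j\mid X_{\pa(i)\cup D}$ by weak union, exactly as in the proof of Theorem~\ref{thm: undirected positive equality}. Since $\pa(i)\cup D=\nd(i)\setminus\{j\}$, this is the required pairwise relation. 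Whether or not $i$ counts as a member of $\nd(i)$, the same bookkeeping goes through, because conditioning on $X_i$ alongside $X_i$ changes nothing in the identity of index sets.

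For strictness I need some $\mathbb P\in\mathbb D_X$ and some DAG $\mathcal G$ with $\mathbb P\in\mathcal M_P(\mathcal G)\setminus\mathcal M_L(\mathcal G)$. The obstacle is that the reverse implication does hold whenever intersection is available. Listing $\nd(i)\setminus\pa(i)=\{j_1,\dots,j_r\}$, the pairwise relations $X_i\independent X_{j_k}\mid X_{\nd(i)\setminus\{j_k\}}$ can be combined, by repeated use of the intersection axiom as in the last step of the proof of Theorem~\ref{thm: undirected positive equality}, into $X_i\independent X_{\nd(i)\setminus\pa(i)}\mid X_{\pa(i)}$. Because every $\mathbb P\in\mathbb D_X$ is positive, Proposition~\ref{prop: CI axioms} makes intersection available throughout. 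So no counterexample can come from a failure of intersection, and the strictness has to come from the precise definition of $\nd(i)$.

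Under the convention in the terminology list, $\de(i)$ is the set of $j$ reachable from $i$ by a directed path, and the trivial path is not counted. Then $i\notin\de(i)$, so $i\in\nd(i)=V\setminus\de(i)$. Each pairwise relation $X_i\independent X_j\mid X_{\nd(i)\setminus\{j\}}$ therefore conditions on $X_i$ itself. Such a statement holds for every distribution, since given $X_i=x_i$ the conditional law of $X_i$ is a point mass. Hence $\mathcal M_P(\mathcal G)=\mathbb D_X$. Now take $m=2$ and let $\mathcal G$ be the empty DAG on $\{1,2\}$. The local property requires $X_1\independent X_2$, and a bivariate normal $\mathcal N(0,\Sigma)$ with $\sigma_{12}\neq0$ lies in $\mathbb D_X$ and violates it. This proves the strict inclusion.

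I expect this last step to be the main point needing care. If instead $i$ is excluded from $\nd(i)$, the intersection argument above shows $\mathcal M_L(\mathcal G)=\mathcal M_P(\mathcal G)$ on $\mathbb D_X$. Strictness would then only survive after dropping positivity, as in Remark~\ref{rem: weakening of assumptions}, for example by taking $X_1=X_2=X_3$ on the empty DAG with three nodes.
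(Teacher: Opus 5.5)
Your containment argument is the paper's: weak union applied to the local relation at $i$, after splitting $\nd(i)\setminus\pa(i)=\{j\}\cup D$. The gap is in the strictness part, which rests on reading $\nd(i)$ as containing $i$. That is not the paper's convention, for several reasons:
\begin{itemize}
\item the paper explicitly admits the trivial path from a vertex to itself;
\item the proof of Lemma~\ref{lem: moral MP} uses $C\subseteq\an(C)$;
\item the proof of Theorem~\ref{thm: factorization DAG} needs $\{i\}$ and $\nd(i)\setminus\pa(i)$ to be disjoint for its d-separation claim;
\item in the example used for this very proposition, the local relation at node $3$ is written $X_3\independent X_{\{1,2\}}\mid X_4$.
\end{itemize}

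Worse, under your reading the local relation contains $X_i\independent X_i\mid X_{\pa(i)}$ (by decomposition), which forces $X_i$ to be degenerate given its parents. For $\mathcal X_i=\mathbb R$ or $[d_i]$ with $d_i\ge 2$, no positive $\mathbb P\in\mathbb D_X$ satisfies this. So $\mathcal M_L(\mathcal G)=\emptyset$, contradicting $\mathcal M_L(\mathcal G)=\mathcal M_F(\mathcal G)$ in Theorem~\ref{thm: factorization DAG}. The strictness you obtain is therefore just $\emptyset\subsetneq\mathbb D_X$, and it says nothing about the intended statement. Your claim that the empty DAG on $\{1,2\}$ ``requires $X_1\independent X_2$'' also understates things: under your reading it requires $X_1\independent X_{\{1,2\}}$.

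Under the intended convention $i\notin\nd(i)$, your intersection argument is correct. It gives $\mathcal M_P(\mathcal G)\subseteq\mathcal M_L(\mathcal G)$ for every DAG, so the strict inclusion cannot hold inside $\mathbb D_X$.

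The paper gets strictness from a binary distribution on the DAG $1\to2$, $1\to4$, $2\to4$, $4\to3$, with $X_1=X_2=X_3$ and $X_4$ independent. That distribution assigns probability zero to every outcome with $x_1\ne x_2$, so it is not in $\mathbb D_X$. It works precisely because intersection fails for it. So the paper's argument establishes strictness only in the relaxed, non-positive setting of Remark~\ref{rem: weakening of assumptions}. That is exactly the repair in your last paragraph, and your three-node empty DAG with $X_1=X_2=X_3$ is a simpler instance of the same phenomenon.

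You should drop the convention trick and state the honest version:
\begin{itemize}
\item containment via weak union in general;
\item equality on $\mathbb D_X$ via intersection;
\item strictness only once positivity is removed.
\end{itemize}
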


\begin{proof}
    We first prove the inclusion, and then show that the inclusion is strict by way of an example. 
    Let $\mathbb P\in \mathbb \mathcal M_L(\mathcal G)$, and suppose that $j\to i \notin E$ for some $j \in \nd(i)$.  
    In particular, $j\notin \pa(i)$, so $j\in \nd(i)\setminus \pa(i)$.  
    Since $X_i \independent X_{\nd(i)\setminus \pa(i)} \mid X_{\pa(i)}$ holds in $\mathbb P$, then writing $\nd(i) \setminus \pa(i) = A \cup B$ where $A = \{j\}$ and $B = \nd(i) \setminus (\pa(i)\cup \{j\})$ we obtain that $X_i \independent X_A \mid X_{\pa(i) \cup B}$ holds in $\mathbb P$ by the weak union property in Proposition~\ref{prop: CI axioms}. This CI relation simplifies notationally to $X_i \independent X_j \mid X_{\nd(i) \setminus \{j\}}$. 
    Since $i$ and $j$ were arbitrary, it follows that $\mathbb P \in \mathcal M_P(\mathcal G)$. 

    A classic example of a distribution $\mathbb P \in \mathcal M_P(\mathcal G)$ for a DAG $\mathcal G$ that does not belong to $\mathcal M_L(\mathcal G)$ is the following (see \cite[Example 3.26]{Lauritzen1996}):
    Suppose that $\mathcal G$ is the following DAG: 
    \begin{center}
    \begin{tikzpicture}[thick, scale=0.6]
    \node[circle, draw, inner sep=1pt, minimum width=1pt] (1) at (0,0)  {$1$};
    \node[circle, draw, inner sep=1pt, minimum width=1pt] (2) at (2,0) {$2$};
    \node[circle, draw, inner sep=1pt, minimum width=1pt] (4) at (1,-1.5) {$4$};
    \node[circle, draw, inner sep=1pt, minimum width=1pt] (3) at (1,-3) {$3$};
    
    \draw[->, very thick] (1) -- (2);
    \draw[->, very thick] (1) -- (4);
    \draw[->, very thick] (2) -- (4);
    \draw[->, very thick] (4) -- (3);
    \end{tikzpicture} 
    \end{center}
    Consider the discrete distribution on $X = (X_i)_{i\in \{1,2,3,4\}}$ where $X_i$ takes values in $\mathcal X_i = \{0,1\}$ for all $i$, $X_4$ is independent of $X_3$, $X_1 = X_2 = X_3$ and 
    \[
    f_{X_3}(1) = f_{X_3}(0) = f_{X_4}(1) = f_{X_4}(0) =  0.5. 
    \]
    The distribution $\mathbb P$ satisfies the pairwise Markov property with respect to $\mathcal G$ since 
    \[
    f_{X_3 \mid X_4, X_2, X_1}(x_3 \mid x_4, x_3, x_3) = f_{X_3 \mid X_4, X_2}(x_3 \mid x_4, x_3) = f_{X_3 \mid X_4, X_1}(x_3 \mid x_4, x_3). 
    \]
    However, it does not satisfy the local Markov property with respect to $\mathcal G$ since the equality of $X_1 = X_2 = X_3$ excludes the possibility that $X_3 \independent X_{\{1,2\}} \mid X_4$. 
\end{proof}

To prove the equalities in~\eqref{eqn: directed equalities} it is easiest to introduce a fourth Markov property that is combinatorially equivalent to the directed global Markov property. 
This Markov property will allow us to rely on our results for undirected CI graphical models. 
To define this additional Markov property we introduce an undirected graph constructed from a DAG. 

\begin{definition}
    \label{def: moral graph}
    Let $\mathcal G = ([m], E)$ be a DAG. 
    The \emph{moral graph} for $\mathcal G$ is the undirected graph $G^{\textrm{moral}} = ([m], E^{\textrm{moral}})$ where
    \[
    E^{\textrm{moral}} = \{ i - j : i \to j \in E \, \,  \textrm{ or } \, \,  i \leftarrow j\in E\} \cup \{ i - j : i \to k, j \to k \in E, \, k\in [m]\}. 
    \]
\end{definition}

\begin{example}
    The moral graph of the DAG from Example~\ref{ex: d-sep 2} is the following 
    \begin{center}
    \begin{tikzpicture}[thick, scale=0.6]
    \node[circle, draw, inner sep=1pt, minimum width=1pt] (1) at (0,0)  {$1$};
    \node[circle, draw, inner sep=1pt, minimum width=1pt] (2) at (2,0) {$2$};
    \node[circle, draw, inner sep=1pt, minimum width=1pt] (3) at (0,-2) {$3$};
    \node[circle, draw, inner sep=1pt, minimum width=1pt] (4) at (2,-2) {$4$};
    \node[circle, draw, inner sep=1pt, minimum width=1pt] (5) at (4,-2) {$5$};
    \node[circle, draw, inner sep=1pt, minimum width=1pt] (6) at (3,-3.5) {$6$};
    
    \draw[-, very thick] (1) -- (2);
    \draw[-, very thick] (1) -- (3);
    \draw[-, very thick] (2) -- (4);
    \draw[-, very thick] (3) -- (4);
    \draw[-, very thick] (4) -- (5);
    \draw[-, very thick] (4) -- (6);
    \draw[-, very thick] (5) -- (6);

    \draw[-, very thick] (2) -- (3);
    \end{tikzpicture}
    \end{center}
\end{example}

Our fourth Markov property for directed acyclic graphs utilizes the moral graph construction, applied to induced subgraphs on ancestral sets. 

\begin{definition}
    \label{def: moral MP}
    Let $\mathcal G = ([m],E)$ be a DAG.  
    The \emph{moral Markov property for } $\mathcal G$ is the set of CI relations
    \[
    \CI_M(\mathcal G) = \{X_A \independent X_B \mid X_C :  \textrm{$A$ and $B$ are separated given $C$ in $\mathcal G_{\an(A\cup B\cup C)}^\textrm{moral}$}\}.
    \]
\end{definition}

The moral Markov property is essentially a combinatorial rephrasing of the global Markov property, as the following lemma shows. 

\begin{lemma}
    \label{lem: moral MP}
    Let $\mathcal G = ([m],E)$ be a DAG and $A,B,C\subseteq[m]$. 
    Then $A$ and $B$ are d-separated given $C$ in $\mathcal G$ if and only if $A$ and $B$ are separated given $C$ in $\mathcal G_{\an(A\cup B\cup C)}^\textrm{moral}$.
\end{lemma}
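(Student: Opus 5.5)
We prove both directions by contraposition, translating paths between the two graphs. Throughout, $A,B,C$ are pairwise disjoint (as in Definition~\ref{def: d-separation}). Write $S=\an(A\cup B\cup C)$, and adopt the convention that $\an(\cdot)$ contains the set itself. With this convention $S$ is ancestral: every parent of a node in $S$ again lies in $S$. The key structural fact is that any edge of $\mathcal G^{\textrm{moral}}_S$ is either an edge of $\mathcal G_S$ or a ``marriage'' edge $i-j$. A marriage edge arises from a common child $k$ with $i\to k\leftarrow j$; since moralization here is applied to the induced subgraph $\mathcal G_S$, we may take $k\in S$.

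\textbf{(d-connection $\Rightarrow$ moral connection.)} Suppose $a\in A$ and $b\in B$ are d-connected given $C$ via a path $\rho$. We first show that we may choose $\rho$ to lie entirely in $S$. Every collider on $\rho$ is in $\an(C)\subseteq S$. Every non-collider has an outgoing edge along $\rho$, so following directed edges we reach either an endpoint of $\rho$ or a collider; hence every non-collider is an ancestor of $a$, of $b$, or of a collider, and therefore lies in $S$. Now build a path in $\mathcal G^{\textrm{moral}}_S$ from $a$ to $b$ avoiding $C$. Keep the non-colliders of $\rho$, which are not in $C$, and delete each collider. Consecutive surviving nodes are joined either by an original edge or through a collider they share as a common child, hence by a marriage edge. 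One subtlety is that a run of consecutive colliders is impossible on a path, since two adjacent colliders would require an edge pointing both ways. So deleting colliders yields a walk whose nodes lie outside $C$ and in $S$, and a walk contains a path. Thus $a$ and $b$ are connected given $C$ in $\mathcal G^{\textrm{moral}}_S$.

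\textbf{(moral connection $\Rightarrow$ d-connection).} This is the main obstacle. Suppose $\pi$ is a path from $a$ to $b$ in $\mathcal G^{\textrm{moral}}_S$ avoiding $C$. Replace each marriage edge $i-j$ by $i\to k\leftarrow j$ with $k\in S$, which gives a walk $\omega$ in $\mathcal G_S$. On $\omega$ the non-collider occurrences are nodes of $\pi$, so they avoid $C$, while the colliders are nodes $k\in S$. The difficulty is that such a $k$ need not lie in $\an(C)$, and $\omega$ is a walk rather than a path. I would handle this by a minimality argument, choosing $\pi$ (and hence $\omega$) with the fewest edges, and then case-splitting on a collider $k\notin\an(C)$.

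Since $k\in S$ but $k\notin\an(C)$, the node $k$ has a directed path to $a$ or to $b$ that avoids $C$: every node on that path is a descendant of $k$, and a descendant of $k$ lying in $C$ would put $k\in\an(C)$. Say the path goes to $b$. Then concatenate the portion of $\omega$ from $a$ up to the parent $i$ of $k$ with $i\to k\to\cdots\to b$. In the moral graph this corresponds to a path from $a$ to $b$ through $i$ and descendants of $k$, all outside $C$. This path has no collider at $k$, and its number of colliders not in $\an(C)$ strictly drops. So I would minimize lexicographically the number of ``bad'' colliders, then the length.

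Finally, I need the standard fact that a d-connecting walk, in which colliders lie in $\an(C)$ and non-colliders lie outside $C$, can be shortened to a d-connecting path. I would prove this by the usual loop-removal argument: take the first and last occurrences of a repeated node and check the collider status at the splice. If the splice point becomes a non-collider lying in $C$, reroute through its directed path down to $C$. This step, together with the bad-collider reduction, is where the care lies. Symmetry and the condition $A,B\neq\emptyset$ handle the remaining bookkeeping.
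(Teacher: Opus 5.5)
Your proposal is correct and takes the same route as the paper's proof. In one direction you bypass colliders by marriage edges; in the other you expand marriage edges to $i\to k\leftarrow j$ and send colliders outside $\an(C)$ down to $A\cup B$. Your version is in fact more careful than the paper's sketch, which only reroutes the inserted common children, overlooks colliders that are already nodes of the moral path itself, and simply asserts that the resulting subgraph contains a d-connecting path.

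Two small repairs are needed.

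\textbf{Endpoints must be allowed to move.} A bad collider $k$ has a directed path to \emph{some} node of $A\cup B$, not necessarily to the current endpoints $a,b$. For example, take $a\to x\leftarrow b$ and $x\to a'$ with $A=\{a,a'\}$, $B=\{b\}$, $C=\emptyset$: only $a'$ and $b$ are d-connected. So let the endpoints change, and run your minimization over walks in $\mathcal G_S$ from $A$ to $B$ whose non-collider occurrences avoid $C$, rather than over moral paths.

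\textbf{The loop-removal case you flag cannot occur, and the real case needs an argument.} A repeated node lying in $C$ is a collider at every occurrence, so the splice point is never a non-collider in $C$. The case that does need an argument is a new collider $x\notin C$ at the splice. Either the first occurrence of $x$ is already a collider, or following forward-pointing edges from it along the walk must, by acyclicity, hit a collider before reaching the last occurrence of $x$. In both cases $x\in\an(C)$.
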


\begin{proof}
    Suppose first that $A$ and $B$ are d-connected given $C$ in $\mathcal G$.  
    Then there exists a path $\rho = (v_i)_{i\in [s]}$ with $v_1\in A$ and $v_s\in B$ fulfilling the conditions (1) and (2) in Definition~\ref{def: d-separation}. 
    If the path $\rho$ is directed from either $v_1$ to $v_k$ or $v_k$ to $v_1$ or if there is a unique $k\in [s]$ such that $v_{k-1} \leftarrow v_k \rightarrow v_{k+1}$ occurs, then the same path (with all edges undirected) will exist in $\mathcal G_{\an(A\cup B\cup C)}^\textrm{moral}$.  Moreover, these three cases can only happen if $v_k\notin C$ for all $k\in [s]$, since $\rho$ is d-connecting.  
    Hence, in this case, $A$ and $B$ will be connected given $C$ in $\mathcal G_{\an(A\cup B\cup C)}^\textrm{moral}$. 
    Otherwise, there exist some $v_k$ along this path such that $v_{k-1} \to v_k \leftarrow v_{k-1}$ with $v_k\in \an(C)$.
    Note first that each node in the path $\rho$ belongs to $\an(A\cup B \cup C)$ since all other configurations along three-node subpaths must be one of $v_{k-1} \to v_k \to v_{k+1}$, $v_{k-1} \leftarrow v_k \to v_{k+1}$ or $v_{k-1} \leftarrow v_k \leftarrow v_{k+1}$. 
    Hence, the path $\rho$ with all edges undirected exists in $\mathcal G_{\an(A\cup B\cup C)}^\textrm{moral}$.  
    It is, however, possible that $\rho$ contains nodes in $C$, since $C\subseteq \an(C)$.  
    However, since $\rho$ is d-connecting these nodes must be some $v_k\in C$ for a configuration $v_{k-1} \to v_k \leftarrow v_{k-1}$ along $\rho$ in $\mathcal G$. 
    Hence, in the moral graph $\mathcal G_{\an(A\cup B\cup C)}^\textrm{moral}$, there will also be an edge $v_{k-1} - v_{k+1}$.  
    The existence of these edges then gives a path in $\mathcal G_{\an(A\cup B\cup C)}^\textrm{moral}$ from $v_1$ to $v_s$ that does not use nodes in $C$, showing that $A$ and $B$ are connected in $\mathcal G$ given $C$. 

    Suppose now that $A$ and $B$ are connected given $C$ in $\mathcal G_{\an(A\cup B\cup C)}^\textrm{moral}$. 
    Then there exists a path $\rho = (v_k)_{k\in[s]}$ in $\mathcal G_{\an(A\cup B\cup C)}^\textrm{moral}$ with $v_1\in A$, $v_s\in B$ and $v_k\notin C$ for all $k\in [s]$. 
    For each edge $v_k - v_{k+1}$ along $\rho$, if $v_k$ and $v_{k+1}$ are also adjacent in $\mathcal G_{\an(A\cup B \cup C)}$, replace $v_k - v_{k+1}$ with its directed version from $\mathcal G_{\an(A\cup B \cup C)}$. 
    We call this partially directed path $\tilde\rho$. 
    Note that an undirected edge $v_{k} - v_{k+1}$ on $\tilde\rho$ exists if and only if $v_k$ and $v_{k+1}$ are not adjacent in $\mathcal G_{\an(A\cup B \cup C)}$ and there exists $v_k'$ satisfying $v_k \to v_k' \leftarrow v_{k+1}$  in $\mathcal G_{\an(A\cup B \cup C)}$. 
    We update $\tilde\rho$ to insert each of these nodes $v_k'$. 
    Since each $v_k'\in \an(A\cup B\cup C)$, we know that each $v_k'$ has a descendant in either $A$, $B$ or $C$.  
    Extend the (now fully directed path) $\tilde \rho$ to a directed subgraph of $\mathcal G_{\an(A\cup B \cup C)}$ that includes the directed paths from each $v_k'$ to its descendants in $A$ or $B$ for every $v_k'$ having only descendants in $A$ or $B$.  
    This subgraph is also a subgraph of $\mathcal G$, and it clearly contains a $d$-connecting path between $v_1$ and $v_s$ given $C$.  
    Hence, $A$ and $B$ are d-connected given $C$ in $\mathcal G$. 
\end{proof}

With the help of Lemma~\ref{lem: moral MP}, we may now prove the following theorem, which is the DAG equivalent of Theorem~\ref{thm: undirected positive equality} and Theorem~\ref{thm: hammersley-clifford}. 

\begin{theorem}
    \label{thm: factorization DAG}
    Let $\mathcal G = ([m], E)$ be a DAG. Then 
    \[
    \mathcal M_L(\mathcal G) = \mathcal M_G(\mathcal G) = \mathcal M_F(\mathcal G). 
    \]
\end{theorem}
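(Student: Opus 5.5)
I will prove the cycle of inclusions $\mathcal M_F(\mathcal G)\subseteq \mathcal M_G(\mathcal G)\subseteq \mathcal M_L(\mathcal G)\subseteq \mathcal M_F(\mathcal G)$. Throughout I use a topological ordering of $[m]$, meaning an ordering in which $j\to i\in E$ implies $j$ precedes $i$; it exists since $\mathcal G$ is acyclic.

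\emph{Step 1: $\mathcal M_L(\mathcal G)\subseteq\mathcal M_F(\mathcal G)$.} Relabel so that $1,\ldots,m$ is a topological order. By the chain rule,
\[
f_X(x)=\prod_{i=1}^m f_{X_i\mid X_{[i-1]}}(x_i\mid x_{[i-1]}),
\]
which is well defined by positivity. Since $[i-1]\subseteq \nd(i)$ and $\pa(i)\subseteq[i-1]$, the local relation $X_i\independent X_{\nd(i)\setminus\pa(i)}\mid X_{\pa(i)}$ gives $X_i\independent X_{[i-1]\setminus\pa(i)}\mid X_{\pa(i)}$ by decomposition in Proposition~\ref{prop: CI axioms}. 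Hence each factor equals $f_{X_i\mid X_{\pa(i)}}(x_i\mid x_{\pa(i)})$.

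\emph{Step 2: $\mathcal M_G(\mathcal G)\subseteq\mathcal M_L(\mathcal G)$.} It suffices to check that $\{i\}$ and $\nd(i)\setminus\pa(i)$ are d-separated given $\pa(i)$. Take any path from $i$ to some $j\in\nd(i)\setminus\pa(i)$. If it begins $i\leftarrow v_2$, then $v_2\in\pa(i)$ is a non-collider, so the path is blocked. If it begins $i\to v_2$, follow the path. If it is directed all the way, it ends in $\de(i)$, which contradicts $j\in\nd(i)$. Otherwise there is a first collider $v_k$. It lies in $\de(i)$, and so do all its descendants. But $\pa(i)\subseteq\nd(i)$, so $v_k\notin\an(\pa(i))$, since otherwise acyclicity would fail. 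So the path is blocked at $v_k$. This step is routine.

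\emph{Step 3: $\mathcal M_F(\mathcal G)\subseteq\mathcal M_G(\mathcal G)$.} This is the main step, and I would route it through Lemma~\ref{lem: moral MP} and the undirected theory. Fix a d-separation $A\perp_{\mathcal G}B\mid C$ and set $S=\an(A\cup B\cup C)$, taking ancestors to include the set itself. The plan has three parts.

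First, $S$ is ancestral, so marginalizing a factorizing density onto $S$ gives
\[
f_{X_S}(x_S)=\prod_{i\in S}f_{X_i\mid X_{\pa(i)}}(x_i\mid x_{\pa(i)}).
\]
To see this, integrate out the non-ancestral variables one at a time in reverse topological order. Each such variable is a sink of the remaining graph, so its conditional density integrates to $1$ and no other factor depends on it.

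Second, each factor $f_{X_i\mid X_{\pa(i)}}$ depends only on $x_{\{i\}\cup\pa(i)}$, and $\{i\}\cup\pa(i)$ is a clique of $\mathcal G_S^{\textrm{moral}}$. By Remark~\ref{rem: nonmaximal cliques}, $\mathbb P_{X_S}$ therefore factorizes according to $\mathcal G_S^{\textrm{moral}}$. The Hammersley--Clifford Theorem~\ref{thm: hammersley-clifford} then gives that $\mathbb P_{X_S}$ satisfies the undirected global Markov property for $\mathcal G_S^{\textrm{moral}}$; only the easy direction, factorization implies Markov, is needed here.

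Third, Lemma~\ref{lem: moral MP} says $A$ and $B$ are separated given $C$ in $\mathcal G_S^{\textrm{moral}}$. Hence $X_A\independent X_B\mid X_C$ holds in the marginal on $S$, and therefore in $\mathbb P$.

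The obstacle I anticipate is the marginalization in the first part: I must check that the marginal's conditionals are the original $f_{X_i\mid X_{\pa(i)}}$, which the sink-elimination argument handles. Positivity ensures the marginal lies in $\mathbb D_{X_S}$, so Theorem~\ref{thm: hammersley-clifford} applies.
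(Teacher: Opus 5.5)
Your proposal is correct and follows the same route as the paper. It proves the same cycle of three inclusions. $\mathcal M_L(\mathcal G)\subseteq\mathcal M_F(\mathcal G)$ comes from a topological order, decomposition and the chain rule. $\mathcal M_G(\mathcal G)\subseteq\mathcal M_L(\mathcal G)$ comes from the d-separation of $\{i\}$ and $\nd(i)\setminus\pa(i)$ given $\pa(i)$. $\mathcal M_F(\mathcal G)\subseteq\mathcal M_G(\mathcal G)$ comes from marginalizing to $\an(A\cup B\cup C)$, the moral cliques $\{i\}\cup\pa(i)$, the Hammersley--Clifford Theorem and Lemma~\ref{lem: moral MP}.

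Your write-up is slightly more complete than the paper's. You justify the ancestral marginalization by eliminating sinks in reverse topological order, and you check the d-separation in the local step; the paper only asserts both.
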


\begin{proof}
    Let $\mathbb P \in \mathcal M_F(\mathcal G)$ have density $f_X(x)$.  Then 
    \begin{equation}
        \label{eqn: dist fact}
        f_X(x) = \prod_{i=1}^mf_{X_i\mid X_{\pa(i)}(x_i \mid x_{\pa(i)})}. 
    \end{equation}
    Suppose that $A$ and $B$ are d-separated given $C$ in $\mathcal G$. 
    Marginalizing away nodes not in $\an(A\cup B \cup C)$ in~\eqref{eqn: dist fact} yields a factorization of the marginal distribution $\mathbb P_{\an(A\cup B \cup C)}$ with density
    \begin{equation}
        \label{eqn: an dist fact}
        f_{X_{\an(A\cup B \cup C)}}(x_{\an(A\cup B \cup C)}) = \prod_{i \in \an(A\cup B \cup C)}f_{X_i \mid X_{\pa(i)}}(x_i \mid x_{\pa(i)}). 
    \end{equation}
    Note, by Definition~\ref{def: potential function}, that the conditional densities $f_{X_i \mid X_{\pa(i)}}(x_i \mid x_{\pa(i)})$ are potential functions on the marginal sample space $\mathcal X_{\{i\}\cup \pa(i)}$.  
    Moreover, by Definition~\ref{def: moral graph}, each set $\{i\}\cup \pa(i)$ is a clique in $\mathcal G_{\an(A\cup B \cup C)}^{\textrm{moral}}$. 
    Hence, by Remark~\ref{rem: nonmaximal cliques} and the Hammersley-Clifford Theorem, we have that $\mathbb P_{\an(A\cup B \cup C)}$ belongs to the undirected CI graphical model $\mathcal M(\mathcal G_{\an(A\cup B \cup C)}^{\textrm{moral}})$. 
    Since $A$ and $B$ are d-separated given $C$ in $\mathcal G$, it follows from Lemma~\ref{lem: moral MP} that $A$ and $B$ are separated given $C$ in $\mathcal G_{\an(A\cup B \cup C)}^{\textrm{moral}}$.  
    Hence, by the definition of the undirected global Markov property (see Definition~\ref{def: undirected MPs}~(3)), it follows that $X_A \independent X_B \mid X_C$ holds in $\mathbb P_{\an(A\cup B \cup C)}$.
    This directly implies that $X_A \independent X_B \mid X_C$ holds in $\mathbb P$, so we conclude that $\mathbb P\in\mathcal M(\mathcal G)$. 
    Thus, $\mathcal M_F(\mathcal G)\subseteq \mathcal M_G(\mathcal G)$. 

    Suppose now that $\mathbb P\in \mathcal M_G(\mathcal G)$, and observe that $A = \{i\}$ and $B = \nd(i)\setminus \pa(i)$ are d-separated given $C = \pa(i)$ in $\mathcal G$. 
    Hence, $X_i \independent X_{\nd(i)\setminus \pa(i)} \mid X_{\pa(i)}$ holds in $\mathbb P$. 
    Therefore, $\mathcal M_G(\mathcal G) \subseteq \mathcal M_L(\mathcal G)$. 

    To complete the proof, it remains to show that $\mathcal M_L(\mathcal G)\subseteq \mathcal M_F(\mathcal G)$. 
    For this case, we fix a \emph{topogical ordering} of $\mathcal G$, which is an ordering $\pi = (\pi_1,\ldots, \pi_m)$ of the nodes $[m]$ of $\mathcal G$ with the property that there is no directed path from $\pi_i$ to $\pi_j$ whenever $i > j$. 
    We let $\textrm{pred}(\pi_i) = \{\pi_1,\ldots, \pi_{i-1}\}$, and note that $\textrm{pred}(\pi_i)\subseteq \nd(\pi_i)$ for all $\pi_i\in[m]$. 
    Since $\mathbb P\in \mathcal M_L(\mathcal G)$, we have that $X_{\pi_i} \independent X_{\nd(\pi_i)\setminus\pa(\pi_i)} \mid X_{\pa(\pi_i)}$.
    So by the decomposition property in Proposition~\ref{prop: CI axioms}, we know that $X_{\pi_i} \independent X_{\textrm{pred}(\pi_i)\setminus\pa(\pi_i)} \mid X_{\pa(\pi_i)}$ also holds in $\mathbb P$. 
    Applying the chain rule from basic probability, we conclude that
    \begin{equation*}
        \begin{split}
            f_X(x)
            &= \prod_{i=1}^mf_{X_{\pi_i} \mid X_{\{{\pi_1}, \ldots, {\pi_{i-1}}\}}}(x_{\pi_i} \mid x_{\pi_1}, \ldots, x_{\pi_{i-1}}),\\
            &= \prod_{i=1}^mf_{X_{\pi_i}\mid X_{\pa(\pi_i)}}(x_{\pi_i}\mid x_{\pa(\pi_i)}),\\
            &= \prod_{i=1}^mf_{X_{i}\mid X_{\pa(i)}}(x_{i}\mid x_{\pa(i)}).
        \end{split}
    \end{equation*}
    Hence, $\mathbb P\in \mathcal M_F(\mathcal G)$, which completes the proof. 
\end{proof}

Theorem~\ref{thm: factorization DAG} is sometimes referred to as the \emph{factorization theorem} for DAGs. 
One good thing about this theorem is that it allows us to define \emph{the} CI DAG model for a DAG $\mathcal G$. 

\begin{definition}
    \label{def: DAG model}
    Let $\mathcal G = ([m],E)$ be a DAG. The \emph{conditional independence DAG model} for $\mathcal G$, denoted $\mathcal M(\mathcal G)$ is 
    \[
    \mathcal M(\mathcal G) = \mathcal M_L(\mathcal G) = \mathcal M_G(\mathcal G) = \mathcal M_F(\mathcal G). 
    \]
    The \emph{CI DAG model family} is 
    \[
    \mathcal F_{\mathbb{DAG}} = \{\mathcal M(\mathcal G) : \mathcal G\in \mathbb{DAG}\}. 
    \]
\end{definition}

Note that, by Proposition~\ref{prop: pairwise relation DAGs}, we have that $\mathcal M(\mathcal G)\subsetneq \mathcal M_P(\mathcal G)$.  
In particular, when working with a distribution $\mathbb P \in \mathcal M(\mathcal G)$ we can always assume we have access to the pairwise CI relations defining the pairwise Markov property in Definition~\ref{def: DAG MPs}~(1).  
However, care must be taken when one defines a distribution $\mathbb P$ according to the directed pairwise Markov property constraints, since it is possible that such a distribution does not belong to the CI DAG model $\mathcal M(\mathcal G)$.  
In these instances, one needs to verify that the distribution does indeed belong to $\mathcal M(\mathcal G)$ before having access to CI relations specified by d-separation or the classic DAG factorization in Definition~\ref{def: DAG factorization}. 

Fortunately, many popular models are naturally specified according to the local Markov property or the DAG factorization, in which case the population distribution $\mathbb P$ for the model belongs to the CI DAG model $\mathcal M(\mathcal G)$. 
Classic examples include, for instance, the basic Bayesian hierarchical model, such as the following example from Gelman et al.'s \emph{Bayesian Data Analysis} \cite{gelman}. 

\begin{example}[Bayesian hierarchical models]
    \label{ex: bayesian hierarchy}
    Rats are used by several laboratories conducting a study on the effects of a certain treatment for tumor growth.  
    Each lab uses standard F344-type rats for their control group (which did not receive the treatment).  
    We would like to view the rats across all laboratories as independent and identically distributed specimens from the same population. 
    However, there is a possibility that the rats are subject to different standards in each laboratory.  
    To capture these subtle differences, one could model the probability that an individual rat in the control group for each laboratory develops tumors using a \emph{Bayesian Hierarchical Model}, 
    \[
    \begin{split}
    &X_{i,j} \mid \Theta_i = \theta_i \sim \textrm{Ber}(\theta_i) \\
    &\Theta_i \mid (A, B) = (\alpha, \beta) \sim \textrm{Beta}(\alpha, \beta)  \\
    &f_{A,B}(\alpha, \beta) \propto (\alpha + \beta)^{-5/2}, 
    \end{split}
    \]
    which models the probability a rat $j\in[m_i]$ in lab $i\in[n]$ develops a tumor (i.e. $X_{i,j} = 1$) with a Bernoulli distribution $\textrm{Ber}(\theta_i)$ where the Bernoulli probabilities $\theta_1,\ldots, \theta_n$ are treated as samples from a $\textrm{Beta}(\alpha, \beta)$-distribution and the beta parameters are distributed according to the uninformative hyperprior $f_{A,B}(\alpha, \beta) \propto (\alpha + \beta)^{-5/2}$. 
    The model imposes several conditional independence assumptions in order to facilitate posterior computations.  
    These assumptions are equivalent to assuming that the joint distribution of $(X_1,\ldots, X_n, \Theta_1,\ldots, \Theta_n, A, B)$ belongs to the CI DAG model $\mathcal M(\mathcal G)$ where $\mathcal G$ is the following DAG: 
    \begin{center}
    \begin{tikzpicture}[thick, scale=0.6]
    \node[circle, inner sep=1pt, minimum width=1pt] (x1) at (0,4)  {$X_{1,1}$};
    \node[circle, inner sep=1pt, minimum width=1pt] (x2) at (0,3) {$\vdots$};
    \node[circle, inner sep=1pt, minimum width=1pt] (x3) at (0,2) {$X_{1,m_1}$};

    \node[circle, inner sep=1pt, minimum width=1pt] (3) at (0,0) {$\vdots$};
    
    \node[circle, inner sep=1pt, minimum width=1pt] (x4) at (0,-2) {$X_{n,1}$};
    \node[circle, inner sep=1pt, minimum width=1pt] (x5) at (0,-3) {$\vdots$};
    \node[circle, inner sep=1pt, minimum width=1pt] (x6) at (0,-4) {$X_{n,m_n}$};

    \node[circle, inner sep=1pt, minimum width=1pt] (t1) at (-3,3) {$\Theta_1$};

    \node[circle, inner sep=1pt, minimum width=1pt] (t) at (-3,0) {$\vdots$};

    \node[circle, inner sep=1pt, minimum width=1pt] (t2) at (-3,-3) {$\Theta_n$};

    \node[circle, inner sep=1pt, minimum width=1pt] (ab) at (-6,0) {$(A,B)$};
    
    \draw[->, very thick] (ab) -- (t1);
    % \draw[->, very thick] (ab) -- (t);
    \draw[->, very thick] (ab) -- (t2);
    \draw[->, very thick] (t1) -- (x1);
    % \draw[->, very thick] (t1) -- (x2);
    \draw[->, very thick] (t1) -- (x3);
    \draw[->, very thick] (t2) -- (x4);
    % \draw[->, very thick] (t2) -- (x5);
    \draw[->, very thick] (t2) -- (x6);
    % \draw[->, very thick] (1) -- (4);
    % \draw[->, very thick] (2) -- (4);
    % \draw[->, very thick] (4) -- (3);
    \end{tikzpicture} 
    \end{center}
\end{example}

\begin{remark}[Bayesian networks]
    \label{rem: bayesian networks}
    The terminology \emph{DAG model} and \emph{Bayesian network} are sometimes used interchangeably.  
    However, Bayesian networks are traditionally more specific.  
    The logic of the terminology \emph{Bayesian network} is that a DAG is a purely combinatorial, static object, but by associating the nodes to random variables and assuming the local Markov property holds we translate this combinatorial object into a tool for Bayesian modeling. 
    In particular, the DAG structure allows prior distributions to be naturally built into a combinatorial model for the joint distribution (see Example~\ref{ex: bayesian hierarchy}). 
    Since efficient posterior computations often utilize simplifications via CI relations that are valid in the joint distribution, the global Markov property becomes a useful combinatorial tool for assessing how to do this. 
    From this perspective, when one works with a Bayesian network, they typically have a specific distribution $\mathbb P$ for the random variables $X = (X_i)_{i\in[m]}$ in mind (possibly up to some unknown parameters).  
    Hence, formally, a \emph{Bayesian network} is a pair $(\mathcal G, \mathbb P)$ where $\mathcal G$ is a DAG and $\mathbb P \in \mathcal M (\mathcal G)$. 
    The CI DAG model studied in this section captures all distributions that define Bayesian networks (see \cite[Definition 3.1 and Definition 3.5]{KF}). 
\end{remark}

\subsubsection{Completeness and faithfulness}
\label{subsubsec: completness and faithfulness}
This section collects a few subtle, but important, details about our CI graphical models.  
For starters, the notion of d-separation in the DAG $\mathcal G$ is used to \emph{specify} a set of CI relations that define the model $\mathcal M(\mathcal G)$.
We have seen in Proposition~\ref{prop: CI axioms} that when a distribution satisfies one CI relation, it follows that it satisfies (several) others, which in turn can combine to imply even more CI relations.  
It is natural to ask, \emph{``Is it possible that the specific set of CI relations specified by the d-separation relations in $\mathcal G$ is defining a set of distributions $\mathcal M(\mathcal G)$ that all satisfy some additional CI relations that are not combinatorially encoded in the DAG $\mathcal G$?''}  
The answer, fortunately, is `no' for our CI graphical models.  
\begin{definition}
    \label{def: completeness}
    A Markov property $\mathbb \CI(\mathcal G)$ for a graph $\mathcal G$ is \emph{complete} if for every CI relation $X_A\independent X_B\mid X_C \notin \CI(\mathcal G)$ the model 
    \[
    \mathcal M(\mathcal G) = \{\mathbb P \in \mathbb D_X : \mathbb P \textrm{ satisfies all CI relations in $\CI(\mathcal G)$}\}
    \]
    contains a distribution $\mathbb P$ for which $X_A\not\independent X_B \mid X_C$. 
    That is, the Markov property $\CI(\mathcal G)$ is complete if it contains all CI relations implied by containment in the model $\mathcal M(\mathcal G)$. 
\end{definition}

The following theorem states that the global Markov property is complete for both DAGs and undirected graphs. 

\begin{theorem}[Completeness of the global Markov properties]
    \label{thm: CI DAG completeness}
    Let $\mathcal G$ be a DAG or an undirected graph. 
    The global Markov property for $\mathcal G$ is complete.  That is, any CI relation satisfied by all distributions in the CI graphical model $\mathcal M(\mathcal G)$ is in $\CI_G(\mathcal G)$. 
\end{theorem}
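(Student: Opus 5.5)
For each CI relation $X_A\independent X_B\mid X_C$ not in $\CI_G(\mathcal G)$, I will construct a single distribution in $\mathcal M(\mathcal G)$ violating it. Since $A$ and $B$ are connected (resp.\ d-connected) given $C$, there are $a\in A$, $b\in B$ and a connecting path $\rho$ between them. By the decomposition and weak union properties of Proposition~\ref{prop: CI axioms}, it suffices to violate the smaller relation $X_a\independent X_b\mid X_C$. Indeed, if $X_A\independent X_B\mid X_C$ held, decomposition would give $X_a\independent X_b\mid X_C$. I will use Gaussian distributions throughout, which lie in $\mathbb D_X$.

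\emph{Undirected case.} I choose $\rho$ to be a shortest path from $a$ to $b$ avoiding $C$; then no two non-consecutive vertices of $\rho$ are adjacent in $\mathcal G$. I take a concentration matrix $K$ supported on the diagonal and the edges of $\rho$, with $k_{v_iv_{i+1}}=\varepsilon$ small and $K=I$ elsewhere. Then $K\in\mathcal L_{\mathcal G}$, so $\mathcal N(0,K^{-1})\in\mathcal M_{\textrm{Gauss}}(\mathcal G)\subseteq\mathcal M(\mathcal G)$ by Example~\ref{ex: UGGMs}. The variables off $\rho$ are independent of those on $\rho$, and $\rho\cap C=\emptyset$. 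Hence conditioning on $X_C$ is irrelevant, and the claim reduces to showing that $\Sigma_{ab}\neq 0$ for the path-supported (tridiagonal) $K$. Expanding $(I+\varepsilon N)^{-1}$ as a Neumann series, the lowest-order term in $\varepsilon$ of the $(a,b)$ entry is $\pm\varepsilon^{s-1}$, where $s-1$ is the path length. So this entry is nonzero for small $\varepsilon$.

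\emph{DAG case.} I use linear structural equations $X_i=\sum_{j\in\pa(i)}\lambda_{ji}X_j+\varepsilon_i$ with independent standard normal errors; such a distribution factorizes according to $\mathcal G$ and so lies in $\mathcal M_F(\mathcal G)=\mathcal M(\mathcal G)$ by Theorem~\ref{thm: factorization DAG}. I set $\lambda$ to be nonzero (value $t$) only on the edges of a chosen d-connecting path $\rho$. For each collider $v_k$ on $\rho$ with $v_k\notin C$, I also make $\lambda$ nonzero along one directed path from $v_k$ to some $c\in C$. I choose $\rho$ and these directed paths minimal, so that the resulting edge subgraph is as tree-like as possible. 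Then $\Sigma=(I-\Lambda)^{-T}(I-\Lambda)^{-1}$ is given by trek sums, and $X_a\independent X_b\mid X_C$ is equivalent to $\det\Sigma_{\{a\}\cup C,\{b\}\cup C}=0$. This determinant is a polynomial in $t$, and the goal is to show it is not identically zero. If it is nonzero at some $t$, we have the desired violation.

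\emph{Main obstacle.} The hard step is showing that this partial determinant is a nonzero polynomial in $t$, because cancellations among trek contributions could a priori occur. My first attempt is a leading-term argument: in the minimal construction, the unique lowest-degree monomial in the expansion comes from the path $\rho$ together with the collider-to-$C$ paths. Minimality should rule out competing trek systems of the same degree. If that becomes messy, the fallback is a genericity argument. The set of parameters satisfying the CI relation is a proper algebraic subset once one point violates it, so it suffices to exhibit a single parameter value with nonzero determinant. For that, I would compute directly in the small subgraph, where the variables off the support can be ignored: they are independent and contribute $1$ to the determinant after reordering.
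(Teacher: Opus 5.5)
Your reduction to a single pair $a,b$ (via decomposition) is correct, and so is your undirected argument. The matrix $K=I+\varepsilon N$, with $N$ the adjacency matrix of $\rho$, is positive definite for small $\varepsilon$ and lies in $\mathcal L_{\mathcal G}$. It makes $X_C$ independent of the variables on $\rho$, and $(K^{-1})_{ab}=(-\varepsilon)^{s-1}+O(\varepsilon^{s})\neq 0$.

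The DAG case has a genuine gap, and it is exactly the step you call the main obstacle: nothing in the proposal shows that $\det\Sigma_{\{a\}\cup C,\{b\}\cup C}$ is not the zero polynomial in $t$. The ``unique lowest-degree monomial'' is only asserted. It cannot be established without controlling the shape of the support $H$ (the edges of $\rho$ plus the collider-to-$C$ paths). ``As tree-like as possible'' does not exclude a descendant path re-entering $\rho$, or two descendant paths meeting (e.g.\ ending at the same vertex of $C$). In such configurations several bijections $\{a\}\cup C\to\{b\}\cup C$ carry nonzero products of trek sums, and no argument is given that their lowest-order terms do not cancel. Collapsing all weights to one $t$ only makes equal-degree collisions more likely. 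The fallback is circular: ``exhibit one parameter value by computing directly in the small subgraph'' is the entire claim, because that subgraph is an arbitrary graph determined by $\rho$.

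What is missing is a rerouting lemma plus a short computation. Consider certificates consisting of a d-connecting path $\rho$ and, for each collider $v\notin C$ on $\rho$, a directed path $\pi_v$ from $v$ to $C$; take one with the fewest total edges. Then each $\pi_v$ meets $C$ only at its endpoint, meets $\rho$ only at $v$, and distinct $\pi_v,\pi_w$ are disjoint. Otherwise, reroute $\rho$ along $\pi_v$ up to the first vertex of $\pi_v$ on $\rho$. In the second case, reroute along $\pi_v$ and back up $\pi_w$ through their first common vertex, which becomes a collider in $\an(C)$. The new path is still d-connecting, and the total length drops by at least the length of the bypassed segment of $\rho$.

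So $H$ is a polytree. It consists of $\rho$, with sources $t_1,\ldots,t_{k+1}$ and colliders $v_1,\ldots,v_k$ in order, plus disjoint pendant directed paths from $v_j$ to distinct $c_j\in C$ (with $c_j=v_j$ if $v_j\in C$). The remaining vertices of $C$ are isolated and contribute a factor $1$. For $k\geq 1$, the only common ancestor in $H$ of $a,c_1$ is $t_1$, of $c_j,c_{j+1}$ is $t_{j+1}$, and of $c_k,b$ is $t_{k+1}$. The pairs $(a,b)$, $(a,c_j)$ with $j\geq 2$, $(c_j,b)$ with $j<k$, and $(c_i,c_j)$ with $|i-j|\geq 2$ have no common ancestor. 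Hence the Leibniz expansion has the single surviving bijection $a\mapsto c_1\mapsto\cdots\mapsto c_k\mapsto b$. Each factor is a single trek monomial because the $t_j$ are sources in $H$, so the determinant is $\pm t^{L}\neq 0$ for $t\neq0$. For $k=0$ it is just $\Sigma_{ab}=t^{s-1}$.

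With this lemma your plan becomes a complete proof. The paper gives no proof of its own, deferring to Koller--Friedman, whose argument rests on the same single-path idea.
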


For a proof of Theorem~\ref{thm: CI DAG completeness}, we refer the reader to \cite[Theorem 3.4]{KF} for the DAG case, and \cite[Theorem 4.3]{KF} for the undirected graph case. 
A key ingredient in the proof of Theorem~\ref{thm: CI DAG completeness} is a positive answer to a second natural question: \emph{Do there exist distributions in $\mathcal M(\mathcal G)$ that satisfy exactly the set of CI relations in the global Markov property for $\mathcal G$?}
Fortunately, the answer is again `yes' for both DAGs and undirected graphs. 

\begin{definition}
    \label{def: faithful}
    Let $\mathcal G$ be a DAG or an undirected graph, and recall that $\CI_G(\mathcal G)$ denotes the global Markov property for $\mathcal G$.  
    We say a distribution $\mathbb P$ is \emph{faithful} to $\mathcal G$ if the CI relation $X_A \independent X_B \mid X_C$ holds in $\mathbb P$ if and only if $X_A\independent X_B\mid X_C\in \CI_G(\mathcal G)$. 
\end{definition}

In other words, when a distribution $\mathbb P$ is faithful to a graph $\mathcal G$ then the separation statements in the graph perfectly encode the CI relations that hold in the distribution.  
In this case, $\mathcal G$ is a perfect combinatorial representation of the conditional independence structure of the distribution $\mathbb P$. 
It is a helpful exercise to prove the following theorem. 

\begin{theorem}[Existence of faithful distributions]
    \label{thm: faithfulness}
    Let $\mathcal G$ be a DAG or an undirected graph. 
    There exist distributions $\mathbb P\in \mathcal M(\mathcal G)$ that are faithful to $\mathcal G$. 
\end{theorem}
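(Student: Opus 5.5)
Since the statement is existential, I will prove it by exhibiting faithful distributions within a parametric Gaussian subfamily and showing that non-faithfulness is a measure-zero, algebraic condition on the parameters. This is the standard ``generic faithfulness'' argument. For a DAG $\mathcal G$, I use the linear structural equation model $X_i = \sum_{j\in\pa(i)}\lambda_{ji}X_j + \varepsilon_i$ with independent $\varepsilon_i\sim\mathcal N(0,\omega_i)$, as in Example~\ref{ex: markov chain}. The covariance is then $\Sigma = (I-\Lambda)^{-T}\Omega(I-\Lambda)^{-1}$, and the density factorizes as in Definition~\ref{def: DAG factorization}, so by Theorem~\ref{thm: factorization DAG} every such distribution lies in $\mathcal M(\mathcal G)$. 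For an undirected graph, I take $\mathcal N(0,K^{-1})$ with $K\in\mathcal L_{\mathcal G}$. By Example~\ref{ex: UGGMs} and Theorem~\ref{thm: undirected positive equality}, these distributions lie in $\mathcal M(\mathcal G)$. In both cases the parameter space $\Theta$ is a nonempty open subset of some $\mathbb R^N$, and both models lie in $\mathbb D_X$.

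Next I reduce faithfulness to the nonvanishing of finitely many polynomials. For Gaussians, $X_A\independent X_B\mid X_C$ holds iff $\Sigma_{A\cup C,B\cup C}$ has rank $|C|$. By the CI calculus in Proposition~\ref{prop: CI axioms}, this is equivalent to $X_a\independent X_b\mid X_C$ for all $a\in A$, $b\in B$, and the pointwise statement holds iff $\det\Sigma_{\{a\}\cup C,\{b\}\cup C}=0$. Meanwhile, $A\perp B\mid C$ holds iff $a\perp b\mid C$ for all $a\in A$, $b\in B$, directly from Definitions~\ref{def: undirected connectedness} and~\ref{def: d-separation}. It therefore suffices to control the finitely many triples $(a,b,C)$ with $a$ and $b$ connected (respectively d-connected) given $C$. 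For each such triple, $p_{abC}(\theta)=\det\Sigma_{aC,bC}$ is a polynomial in $\lambda,\omega$ after clearing the nonvanishing determinant of $I-\Lambda$, since $(I-\Lambda)^{-1}=\sum_k\Lambda^k$ is polynomial by acyclicity. In the undirected case it is a rational function of $K$. If each $p_{abC}$ is not identically zero, then its zero set has Lebesgue measure zero. A finite union of such sets is still null, so almost every $\theta\in\Theta$ gives a faithful distribution. In particular, such distributions exist.

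The main obstacle is showing that each $p_{abC}\not\equiv 0$. It is enough to produce one parameter point where the partial correlation of $a,b$ given $C$ is nonzero. For the undirected case, I choose a path $a=v_1-\cdots-v_s=b$ avoiding $C$ and set $K=I+\epsilon\sum_k(e_{v_k}e_{v_{k+1}}^T+e_{v_{k+1}}e_{v_k}^T)$ with all other off-diagonal entries zero, for small $\epsilon$ to keep $K$ positive definite. Then the variables off the path are independent of those on it, and conditioning on $C$ leaves the marginal precision on the path. By a Schur complement computation, the $(a,b)$ entry of the conditional covariance is a nonzero multiple of $\epsilon^{s-1}$ plus higher order terms. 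For the DAG case, I take a d-connecting path and set $\lambda$ nonzero only on the path edges, together with edges along the directed paths from each collider to a descendant in $C$. This is the subgraph construction used in the proof of Lemma~\ref{lem: moral MP}, with the paths chosen minimal. I then compute the lowest-order term in $\lambda$ of $\det\Sigma_{aC,bC}$ via the trek expansion $\Sigma=\sum_{\text{treks}}\prod\lambda\,\omega_{\text{top}}$. Making this leading-term analysis rigorous, and in particular showing that no cancellation occurs in the DAG case, is the delicate step. I would first try choosing the path and collider descendants so that the resulting subgraph is a tree-like structure in which the partial correlation can be computed explicitly.
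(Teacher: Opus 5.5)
Your overall strategy is the standard generic-faithfulness argument and it is sound: restrict to a Gaussian submodel, reduce faithfulness to the nonvanishing of finitely many polynomials $p_{abC}$, and finish with a measure-zero argument. Your undirected case is complete. With the tridiagonal $K$ one gets exactly $(K_{PP}^{-1})_{ab}=\pm\epsilon^{s-1}/\det K_{PP}\neq 0$.

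One small correction. Going from $X_a\independent X_b\mid X_C$ for all $a\in A$, $b\in B$ back to $X_A\independent X_B\mid X_C$ does not follow from Proposition~\ref{prop: CI axioms}; it fails for $X_3=X_1\oplus X_2$. It does hold for Gaussians, because $\Sigma_{AB\cdot C}$ vanishes entrywise. In any case you only need the decomposition direction.

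\textbf{The gap is the DAG case.} There, the claim that $\det\Sigma_{aC,bC}\not\equiv 0$ whenever $a,b$ are d-connected given $C$ is the whole substance of the theorem, and you leave it as an unspecified leading-term analysis. This step is not routine, for two reasons.
\begin{enumerate}
\item The directed paths $\delta_k$ from the colliders $c_k$ into $C$ can run back into the d-connecting path $\pi$ or into each other, which produces extra treks.
\item Even when $\pi\cup\bigcup_k\delta_k$ is a tree, suppose $c_k\notin C$, and let $u_k,w_k$ be its parents on $\pi$. Then the conditional precision matrix given the endpoints $d_k\in C$ contains the triangle $u_k - c_k - w_k$. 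The direct route and the route through $c_k$ contribute to the $(a,b)$ entry of its inverse with opposite signs, so cancellation has to be ruled out quantitatively.
\end{enumerate}
The proof of Lemma~\ref{lem: moral MP} only produces some d-connecting path and gives no control over these intersections.

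Your tree-like instinct is the right fix, but both of its ingredients have to be supplied.

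\emph{First ingredient: a clean configuration.} Among all choices of $(\pi,\delta_1,\dots,\delta_r)$, take one minimizing the total number of edges. Minimality gives three facts.
\begin{enumerate}
\item Each nontrivial $\delta_k$ meets $C$ only at its end, and $c_k\notin C$.
\item $\delta_k$ meets $\pi$ only at $c_k$. Otherwise let $v$ be the first later vertex of $\delta_k$ on $\pi$. The spliced path $\pi[a,c_k]\,\delta_k[c_k,v]\,\pi[v,b]$, or its mirror image, is still d-connecting: $c_k$ becomes a non-collider not in $C$, and $v$ is either a non-collider not in $C$ or a collider in $\an(d_k)$. It has fewer total edges, contradicting minimality.
\item Distinct $\delta_j,\delta_k$ are disjoint. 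Otherwise splice at their first common vertex $w$, which becomes a collider in $\an(C)$, and drop $\pi[c_j,c_k]$; again the total decreases.
\end{enumerate}
So $H=\pi\cup\bigcup_k\delta_k$ is a polytree, and $C\cap H=\{d_1,\dots,d_r\}=:D$.

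\emph{Second ingredient: the computation.} Put $\lambda\neq 0$ exactly on the edges of $H$. Then $X_H$ is independent of the other variables, so it suffices to show $X_a\not\independent X_b\mid X_D$.
\begin{enumerate}
\item The precision of $X_{H\setminus D}$ given $X_D$ is the corresponding block of $K=(I-\Lambda)\Omega^{-1}(I-\Lambda)^T$.
\item For each $c_k\notin C$, Schur-complement out the block $\{c_k\}\cup\delta_k^\circ$, where $\delta_k^\circ$ is the set of interior vertices of $\delta_k$.
\item What remains is a matrix on the non-colliders of $\pi$, tridiagonal in the order of $\pi$. For an edge $u\to v$ of $\pi$ the entry is $-\lambda_{uv}/\omega_v$. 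Across a collider the entry is $\frac{\lambda_{u_kc_k}\lambda_{w_kc_k}}{\omega_{c_k}}\bigl(1-\mathrm{Var}(X_{c_k}\mid X_{u_k},X_{w_k},X_{d_k})/\omega_{c_k}\bigr)$. This is nonzero because observing $X_{d_k}$ strictly lowers the variance of $X_{c_k}$. For $c_k\in C$ the entry is simply $\lambda_{u_kc_k}\lambda_{w_kc_k}/\omega_{c_k}$.
\item The inverse of this matrix has $(a,b)$ entry $\pm\prod(\textrm{off-diagonal entries})/\det\neq 0$, exactly as in your undirected argument.
\end{enumerate}

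Without these two steps, or a citation of the generic faithfulness theorem of Spirtes--Glymour--Scheines and Meek (or of trek separation), the DAG half of the theorem remains unproved.
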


One consequence of Theorems~\ref{thm: CI DAG completeness} and~\ref{thm: faithfulness} is that we can distinguish our two CI graphical models families.

\begin{remark}[Distinguishing the families of CI models for undirected graphs and DAGs]
    \label{rem: different families}
    We have now defined two graphical models families $\mathcal F_{\mathbb{UG}}$ and $\mathcal F_{\mathbb{DAG}}$.
    It is worth noting that these two families are indeed distinct.  
    This can be done by way of example. 
    Let 
    \begin{center}
    \begin{tikzpicture}[thick, scale=0.6]
    \node[circle, draw, inner sep=1pt, minimum width=1pt] (1u) at (0,0)  {$1$};
    \node[circle, draw, inner sep=1pt, minimum width=1pt] (2u) at (2,0) {$2$};
    \node[circle, draw, inner sep=1pt, minimum width=1pt] (3u) at (0,-2) {$3$};
    \node[circle, draw, inner sep=1pt, minimum width=1pt] (4u) at (2,-2) {$4$};
    
    \draw[-, very thick] (1u) -- (2u);
    \draw[-, very thick] (1u) -- (3u);
    \draw[-, very thick] (2u) -- (4u);
    \draw[-, very thick] (3u) -- (4u);

    \node at (-1.5,-1) {$\mathcal G^{(1)} =$};

    \node[circle, draw, inner sep=1pt, minimum width=1pt] (1) at (0 + 8,0)  {$1$};
    \node[circle, draw, inner sep=1pt, minimum width=1pt] (2) at (2 + 8,0) {$2$};
    \node[circle, draw, inner sep=1pt, minimum width=1pt] (3) at (0 + 8,-2) {$3$};
    \node[circle, draw, inner sep=1pt, minimum width=1pt] (4) at (2 + 8,-2) {$4$};
    
    \draw[->, very thick] (1) -- (2);
    \draw[->, very thick] (1) -- (3);
    \draw[->, very thick] (2) -- (4);
    \draw[->, very thick] (3) -- (4);

    \node at (-1.5 + 8,-1) {$\mathcal G^{(2)} =$};
    
    \end{tikzpicture}
    \end{center}
    By Theorems~\ref{thm: hammersley-clifford} and~\ref{thm: factorization DAG}, respectively, we know that $\mathcal M(\mathcal G^{(1)})$ consists of all distributions $\mathbb P \in \mathbb D_X$ satisfying the CI relations
    \[
    X_1 \independent X_4 \mid X_2,X_3 \qquad \textrm{and} \qquad X_2 \independent X_3 \mid X_1,X_4,
    \]
    and $\mathcal M(\mathcal G^{(2)})$ is all $\mathbb P \in \mathbb D_X$ satisfying
    \[
    X_1 \independent X_4 \mid X_2,X_3 \qquad \textrm{and} \qquad X_2 \independent X_3 \mid X_1. 
    \]
    By Theorem~\ref{thm: faithfulness}, there exist distributions $\mathbb P^{(1)} \in \mathcal M(\mathcal G^{(1)})$ and $\mathbb P^{(2)} \in \mathcal M(\mathcal G^{(2)})$ that are faithful to their respective graphs. 
    Observing the existence of these distributions is enough to show that $\mathcal M(\mathcal G^{(1)}) \neq \mathcal M(\mathcal G^{(2)})$. 
    To make the stronger observation that $\mathcal F_{\mathbb{UG}}\neq \mathcal F_{\mathbb{DAG}}$, one can use the distributions $\mathbb P^{(1)}, \mathbb P^{(2)}$ to show that $\mathcal M(\mathcal G^{(1)})\neq \mathcal M(\mathcal G)$ for \emph{any} $\mathcal G \in \mathbb{DAG}$ and $\mathcal M(\mathcal G^{(2)}) \neq \mathcal M(\mathcal G)$ for \emph{any} $\mathcal G \in \mathbb{UG}$. 
    Working out the details of this proof is a healthy exercise. 

    One consequence of this observation is that the two different graphical models families each provide models that may not be covered by the other.  
    Hence, in addition to the different graphs offering different interpretations, they also combine to provide a more robust framework for modeling general conditional independence structures underlying different data scenarios. 
    To make use of this robustness naturally requires that we complete the graphical models program (see Section~\ref{sec: intro}) for \emph{both} families. 
    As we will see in the coming section, the solutions to the graphical models program depend heavily upon the chosen graphical model family $\mathcal F_\mathbb G$. 
\end{remark}

\subsubsection{Model distinguishability for CI DAG models}
\label{subsubsec: CI DAG MECs}
A key difference between the undirected CI graphical model family and the CI DAG model family is that two CI DAG models $\mathcal M(\mathcal G)$ and $\mathcal M(\mathcal H)$ can be identical even if $\mathcal G \neq \mathcal H$.  
This phenomenon is already clear for even small examples on three nodes. 
For instance, 
\begin{center}
    \begin{tikzpicture}[thick, scale=0.6]
    \node[circle, draw, inner sep=1pt, minimum width=1pt] (1u) at (0,0)  {$1$};
    \node[circle, draw, inner sep=1pt, minimum width=1pt] (2u) at (2,0.24) {$2$};
    \node[circle, draw, inner sep=1pt, minimum width=1pt] (3u) at (4,-0.2) {$3$};
    
    \draw[->, very thick] (1u) -- (2u);
    \draw[->, very thick] (2u) -- (3u);

    \node at (-1.5,0) {$\mathcal G =$};

    \node[circle, draw, inner sep=1pt, minimum width=1pt] (1) at (0 + 10,0)  {$1$};
    \node[circle, draw, inner sep=1pt, minimum width=1pt] (2) at (2 + 10,-0.15) {$2$};
    \node[circle, draw, inner sep=1pt, minimum width=1pt] (3) at (4 + 10,0.1) {$3$};
    
    \draw[<-, very thick] (1) -- (2);
    \draw[<-, very thick] (2) -- (3);

    \node at (-1.5 + 10,0) {$\mathcal H =$};
    
    \end{tikzpicture}
\end{center}
define the same CI DAG model (i.e., $\mathcal M(\mathcal G) = \mathcal M(\mathcal H)$) by Theorem~\ref{thm: factorization DAG}, since both graphs contain only the single d-separation $1 \independent 3 \mid 2$. 

This phenomenon can be problematic when one wishes to interpret the directions of the edges in the DAG causally. 
Specifically, if we claim that our data-generating distribution $\mathbb P$ belongs to $\mathcal M(\mathcal G)$, then it also belongs to $\mathcal M(\mathcal H)$ (and vice versa).  
Hence, deducing that $\mathbb P$ is a good fit for our data-generating distribution does not immediately imply that we have learned an ordered hierarchy of the dependence structure in the data-generating process. 
In other words, the causal dependencies are unidentifiable from the distribution $\mathbb P$ alone. 
While this may come as disappointing news, it should also be reassuring, since distinguishing the causal dependence structure from \emph{only} the CI relations satisfied by the distribution would go against the age-old adage, \emph{``Correlation does not imply causation.''}

This apparent lack of \emph{structural identifiability}, raises an important question.  

\begin{questype}{Question 1}
    If  the data-generating distribution $\mathbb P$ has underlying causal structure given by an unknown DAG $\mathcal G$, then how much of the causal structure $\mathcal G$ can be identified using only the CI relations satisfied by $\mathbb P$ (i.e., using only the assumption that $\mathbb P\in \mathcal M(\mathcal G)$)? 
\end{questype}

This question boils down to asking for a characterization of when two DAGs $\mathcal G, \mathcal H$ define the same CI DAG model $\mathcal M(\mathcal G) = \mathcal M(\mathcal H)$.
Thanks to Theorem~\ref{thm: factorization DAG}, this equivalence can be seen from two perspectives: (1) from the perspective of equality of the statistical models, and (2) from the purely combinatorial perspective of the DAGs and how they encode CI relations.
This leads to two definitions of equivalence.  
The first pertains to statistical models:

\begin{definition}[Model equivalence]
    \label{def: model equivalence}
    Let $\mathcal M, \mathcal M' \subseteq \mathbb D_X$ be two statistical models (i.e. sets of distributions).  We say that $\mathcal M$ and $\mathcal M'$ are \emph{model equivalent} if $\mathcal M = \mathcal M'$. 
\end{definition}

Our second notion of equivalence is purely combinatorial:

\begin{definition}[Markov equivalence of DAGs]
    \label{def: Markov equivalence}
    Let $\mathcal G, \mathcal H$ be two DAGs. We say that $\mathcal G$ and $\mathcal H$ are \emph{Markov equivalent} if they have the same set of d-separation relations; i.e., we have equality of the following ordered triples of subsets of $[m]$:
    \[
    \{ (A,B,C) : A, B, C\subseteq[m], A \perp_\mathcal G B \mid C\} = \{ (A,B,C) : A, B, C\subseteq[m], A \perp_\mathcal H B \mid C\}.
    \]
\end{definition}

For CI DAG models, these two notions of equivalence are the same. 

\begin{lemma}
    \label{lem: markov equals model}
    Let $\mathcal G$ and $\mathcal H$ be two DAGs.  Then $\mathcal M(\mathcal G)$ and $\mathcal M(\mathcal H)$ are model equivalent if and only if $\mathcal G$ and $\mathcal H$ are Markov equivalent. 
\end{lemma}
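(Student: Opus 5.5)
The plan is to prove both directions by identifying each model $\mathcal M(\mathcal G)$ with the global Markov property, which Theorem~\ref{thm: factorization DAG} and Definition~\ref{def: DAG model} permit: $\mathcal M(\mathcal G) = \mathcal M_G(\mathcal G)$. This means $\mathcal M(\mathcal G)$ is exactly the set of $\mathbb P\in\mathbb D_X$ satisfying every CI relation in $\CI_G(\mathcal G)$. The latter set is, by Definition~\ref{def: DAG MPs}, determined by the set of d-separation triples $(A,B,C)$ of $\mathcal G$.

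\emph{Markov equivalence implies model equivalence.} Suppose $\mathcal G$ and $\mathcal H$ have the same d-separation triples. Then $\CI_G(\mathcal G) = \CI_G(\mathcal H)$ as sets of CI statements. Hence $\mathcal M_G(\mathcal G) = \mathcal M_G(\mathcal H)$ directly from Definition~\ref{def: DAG GMs}~(3), and so $\mathcal M(\mathcal G) = \mathcal M(\mathcal H)$. This direction is immediate.

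\emph{Model equivalence implies Markov equivalence.} Suppose $\mathcal M(\mathcal G) = \mathcal M(\mathcal H)$, and suppose for contradiction that some triple is a d-separation in $\mathcal G$ but not in $\mathcal H$, say $A \perp_{\mathcal G} B\mid C$ while $A\not\perp_{\mathcal H} B \mid C$. The cleanest route is through faithfulness. By Theorem~\ref{thm: faithfulness}, choose $\mathbb P\in\mathcal M(\mathcal H)$ faithful to $\mathcal H$. Since $A\not\perp_{\mathcal H} B\mid C$, the statement $X_A\independent X_B\mid X_C\notin\CI_G(\mathcal H)$, so faithfulness gives $X_A\not\independent X_B\mid X_C$ in $\mathbb P$. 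But $\mathbb P\in\mathcal M(\mathcal H)=\mathcal M(\mathcal G)=\mathcal M_G(\mathcal G)$, and $X_A\independent X_B\mid X_C\in\CI_G(\mathcal G)$, so $\mathbb P$ satisfies this CI relation, a contradiction. Swapping the roles of $\mathcal G$ and $\mathcal H$ gives the reverse containment of d-separation triples, so the two DAGs are Markov equivalent. Equivalently, one could invoke the completeness result, Theorem~\ref{thm: CI DAG completeness}: the CI relations holding in all of $\mathcal M(\mathcal G)=\mathcal M(\mathcal H)$ are exactly $\CI_G(\mathcal G)$ and also exactly $\CI_G(\mathcal H)$.

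The main obstacle is this second direction, which depends entirely on the nontrivial existence of faithful distributions (or on completeness). It is stated earlier and may be assumed, so the rest is bookkeeping. One small point needs care: the triples in Definition~\ref{def: Markov equivalence} range over all subsets, whereas d-separation in Definition~\ref{def: d-separation} is defined for pairwise disjoint nonempty $A,B$. I would restrict attention to the triples where d-separation is defined, and treat the degenerate cases by the same convention in both graphs, so that they agree trivially.
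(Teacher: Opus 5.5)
Your proposal is correct and follows essentially the same route as the paper. The paper also identifies $\mathcal M(\mathcal G)=\mathcal M_G(\mathcal G)$ for the easy direction, and for the converse uses a distribution in $\mathcal M(\mathcal H)=\mathcal M(\mathcal G)$ that is faithful to $\mathcal H$ (Theorem~\ref{thm: faithfulness}). The paper argues directly rather than by contradiction, which is only a cosmetic difference.
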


\begin{proof}
    Suppose first that $\mathcal G$ and $\mathcal H$ are Markov equivalent, and let $\mathbb P\in \mathcal M(\mathcal G)$.  
    By Definition~\ref{def: DAG model}, we have that $\mathcal M(\mathcal G) = \mathcal M_G(\mathcal G)$, which means that $\mathbb P$ entails $X_A \independent X_B \mid X_C$ whenever $A$ and $B$ are d-separated given $C$ in $\mathcal G$. 
    By Markov equivalence of $\mathcal G$ and $\mathcal H$, we know that the two graphs have exactly the same set of d-separation relations.  So $\mathbb P$ entails all CI relations defining the global Markov property for $\mathcal H$.  
    Hence, $\mathbb P \in \mathcal M_G(\mathcal H) = \mathcal M(\mathcal H)$.  
    By symmetry of the argument we conclude that $\mathcal M(\mathcal G) = \mathcal M(\mathcal H)$. 

    Now suppose that $\mathcal M(\mathcal G) = \mathcal M(\mathcal H)$, and let $X_A\perp_\mathcal G X_B \mid X_C$ be a d-separation relation in $\mathcal G$. 
    Since $\mathcal M_G(\mathcal G) = \mathcal M(\mathcal G)$, it follows that $X_A \independent X_B \mid X_C$ holds in all $\mathbb P\in \mathcal M(\mathcal G)$. 
    Since $\mathcal M(\mathcal G) = \mathcal M(\mathcal H)$, then $X_A \independent X_B \mid X_C$ holds in all $\mathbb P\in \mathcal M(\mathcal H)$.
    By Theorem~\ref{thm: faithfulness}, we know that $\mathcal M(\mathcal H)$ contains a distribution $\mathbb P$ that is faithful to $\mathcal H$.  
    Since this distribution $\mathbb P$ satisfies the  CI relation $X_A\independent X_B \mid X_C$, it follows that $X_A \perp_\mathcal H X_B \mid X_C$.  
    Hence, all d-separation relations that hold in $\mathcal G$ also hold in $\mathcal H$. 
    By symmetry of the argument, we conclude that $\mathcal G$ and $\mathcal H$ are Markov equivalent. 
\end{proof}

Lemma~\ref{lem: markov equals model} reduces the problem of characterizing model equivalence for CI DAG models to the combinatorial problem of characterizing DAGs that have the same set of d-separation relations. 
This resulting combinatorial characterization requires two special graph theory definitions. 

\begin{definition}
    \label{def: skeleton}
    Let $\mathcal G = ([m],E)$ be a DAG. The \emph{skeleton} of $\mathcal G$ is the undirected graph $\mathcal G^{\textrm{skel}} = ([m], E^{\textrm{skel}})$ where
    \[
    E^{\textrm{skel}} = \{i - j : i\to j \in E\}. 
    \]
    That is, $\mathcal G^{\textrm{skel}}$ is the graph obtained from $\mathcal G$ by forgetting the directions of all edges. 
\end{definition}

\begin{definition}
    \label{def: v-structure}
    Let $\mathcal G = ([m],E)$ be a DAG.  A triple $(i,j,k)$ of nodes $i,j,k\in[m]$ is called a \emph{v-structure} (in $\mathcal G$) if $i\to j, k\to j\in E$ but $i \to k, k\to i\notin E$. 
\end{definition}

The following theorem and corollary were originally proven by Frydenberg \cite{frydenberg} and Verma and Pearl \cite{vermapearl}. 

\begin{theorem}
    \label{thm: VP}
    Two DAGs $\mathcal G$ and $\mathcal H$ are Markov equivalent if and only if they have the same skeleton and v-structures.
\end{theorem}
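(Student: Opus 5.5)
We prove the two directions separately, working purely combinatorially with d-separation (Definition~\ref{def: d-separation}) and, where convenient, with the moral-graph reformulation of Lemma~\ref{lem: moral MP}.

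For necessity, we read the skeleton and the v-structures off the d-separation relations. First we show that $i$ and $j$ are adjacent in $\mathcal G$ if and only if there is \emph{no} $C\subseteq[m]\setminus\{i,j\}$ with $i\perp_{\mathcal G} j\mid C$. An edge is always a d-connecting path, since it has no interior nodes. Conversely, if $i,j$ are nonadjacent and, say, $j\notin\de(i)$, take $C=\pa(i)$. Then $\{i\}$ and $\nd(i)\setminus\pa(i)$ are d-separated given $\pa(i)$, as already observed in the proof of Theorem~\ref{thm: factorization DAG}, and $j$ lies in that set. Hence Markov equivalent DAGs have the same skeleton. Next, let $i-k-j$ be an unshielded triple in the common skeleton. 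We claim it is a v-structure in $\mathcal G$ exactly when $k\notin C$ for every $C$ with $i\perp_{\mathcal G} j\mid C$; such a $C$ exists by the previous step. Indeed, if $i\to k\leftarrow j$, then any separating $C$ must exclude $k$, and also $\de(k)$, since otherwise the collider path $i\to k\leftarrow j$ would be d-connecting. If the triple is not a collider, then $k$ is a non-collider on the path $i,k,j$, so every separating $C$ must contain $k$. Both characterizations are phrased purely in terms of d-separation triples, so Markov equivalent DAGs share their v-structures.

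For sufficiency, we assume $\mathcal G$ and $\mathcal H$ have the same skeleton and v-structures, suppose $A\not\perp_{\mathcal G} B\mid C$, and produce a d-connecting path in $\mathcal H$. We choose a d-connecting path $\rho$ in $\mathcal G$ between some $a\in A$ and $b\in B$ that is minimal: first of minimal length among d-connecting paths, and then with collider nodes as close to $C$ as possible in directed distance. Since the skeletons agree, $\rho$ is a path in $\mathcal H$. Any unshielded collider on $\rho$ in $\mathcal G$ is a v-structure, hence also a collider in $\mathcal H$, and any unshielded non-collider stays a non-collider in $\mathcal H$. It then remains to handle two issues. The first is shielded triples $v_{k-1},v_k,v_{k+1}$ on $\rho$, where $v_{k-1}$ and $v_{k+1}$ are adjacent; here we use minimality to shortcut through the edge $v_{k-1}-v_{k+1}$, or argue that the collider status must agree. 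The second is the ancestor condition: a collider in $\mathcal H$ must lie in $\an_{\mathcal H}(C)$.

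We expect this last part to be the main obstacle, because ancestral relations need not be preserved between $\mathcal G$ and $\mathcal H$. To handle it, we take the directed path $v_k\to\cdots\to c$ in $\mathcal G$ with $c\in C$ of minimal length. If its orientation differs in $\mathcal H$, then either a new v-structure appears or some node on it yields a shorter d-connecting path, contradicting minimality. The cleanest way to organize this may be by induction on the number of such defects, following Verma and Pearl~\cite{vermapearl}. Alternatively, one could first prove Chickering's lemma that Markov-equivalent DAGs in this sense are connected by covered-edge reversals, and check that a single covered reversal preserves d-separation via Lemma~\ref{lem: moral MP}: the moral graphs of ancestral sets are unchanged, since $\pa(j)=\pa(i)\cup\{i\}$. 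If the direct path argument becomes unwieldy, we would fall back on this covered-edge route.
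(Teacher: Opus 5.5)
The paper gives no proof of this theorem; it only cites Frydenberg and Verma--Pearl, so your argument has to stand on its own. Your necessity half does: the adjacency criterion via $C=\pa(i)$ and the local d-separation is correct and complete. So is the test for an unshielded triple $i-k-j$, namely whether $k$ lies in no separating set or in every one. The sufficiency half, however, is a plan rather than a proof. You identify where the work lies and then leave it open, and the tools you propose do not work as stated.

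First, minimality of length does not let you shortcut shielded triples. Take the DAG with edges $a\to v$, $w\to v$, $w\to b$, $v\to b$ and $C=\{v\}$. The only d-connecting path from $a$ to $b$ is $a,v,w,b$. Its triple $(v,w,b)$ is shielded, yet the shortcut $a,v,b$ is blocked, because $v$ becomes a non-collider lying in $C$. What one can prove is the following. A shortest d-connecting path has no shielded colliders. Its only shielded non-colliders are forks $v_{k-1}\leftarrow v_k\to v_{k+1}$ in which, if the shielding edge is $v_{k-1}\to v_{k+1}$, the vertex $v_{k-1}$ is a collider of the path lying in $C$. That collider is then unshielded, hence a v-structure, which forces $v_k\to v_{k-1}$ in $\mathcal H$ and keeps $v_k$ a non-collider there. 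None of this is in your sketch. Nor can ``the collider status must agree'' simply be asserted: shielded triples do flip between Markov equivalent DAGs (compare $a\to b\to c$, $a\to c$ with $a\to b\leftarrow c$, $a\to c$).

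Second, the ancestor step is only asserted, and it does not produce a shorter path. Take the shortest directed path $v_k\to w_1\to\cdots\to c\in C$ in $\mathcal G$. A first reversal in $\mathcal H$ at a later edge contradicts either the equality of v-structures or the shortness of this directed path. A reversal of $v_k\to w_1$, with $v_k$ a collider in $\mathcal H$, forces $v_{k-1}\to w_1\leftarrow v_{k+1}$ in $\mathcal G$. Replacing $v_k$ by $w_1$ then gives a d-connecting path of the \emph{same} length whose collider is closer to $C$. The exception is when $w_1$ already lies on $\rho$; then one gets a strictly shorter path, using that a vertex of $\rho$ in $C$ must be a collider. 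So your secondary criterion must be imposed among shortest paths, e.g.\ minimizing the total distance from the colliders to $C$, and these cases must actually be checked.

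Finally, the covered-edge fallback presupposes Chickering's transformational lemma, which is the entire combinatorial content of this direction. Its invariance step is also misstated: reversing a covered edge $i\to j$ \emph{does} change $\an(A\cup B\cup C)$ when $i$ lies in it and $j$ does not, since $j$ gets added. This is repairable: $j$ is then simplicial in the new moral graph, with neighbourhood the clique $\{i\}\cup\pa(i)$, and lies outside $A\cup B\cup C$. More simply, use the identity $f(x_i\mid x_{\pa(i)})f(x_j\mid x_i,x_{\pa(i)})=f(x_j\mid x_{\pa(i)})f(x_i\mid x_j,x_{\pa(i)})$ together with Theorem~\ref{thm: factorization DAG} and Lemma~\ref{lem: markov equals model}. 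As written, though, the implication ``same skeleton and v-structures $\Rightarrow$ Markov equivalent'' is not established.
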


Combining this with Lemma~\ref{lem: markov equals model} we obtain the following probabilistic statement.

\begin{corollary}
    \label{cor: VP}
    Let $\mathcal G,\mathcal H\in\mathbb{DAG}$.  Then $\mathcal M(\mathcal G) = \mathcal M(\mathcal H)$ if and only if $\mathcal G$ and $\mathcal H$ have the same skeleton and v-structures. 
\end{corollary}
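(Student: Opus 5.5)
The corollary follows by chaining the two preceding results, so the plan is short. First I would invoke Lemma~\ref{lem: markov equals model}: for any two DAGs $\mathcal G,\mathcal H$, model equivalence $\mathcal M(\mathcal G)=\mathcal M(\mathcal H)$ holds if and only if $\mathcal G$ and $\mathcal H$ are Markov equivalent, i.e.\ have identical sets of d-separation triples $(A,B,C)$. Second I would invoke Theorem~\ref{thm: VP}, which says Markov equivalence holds if and only if the two DAGs have the same skeleton and the same v-structures. Composing the two biconditionals gives $\mathcal M(\mathcal G)=\mathcal M(\mathcal H)$ if and only if $\mathcal G$ and $\mathcal H$ share skeleton and v-structures, which is the statement.

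The only care needed is consistency of hypotheses. Both results concern the same objects: DAGs on a common node set $[m]$, with $\mathcal M(\cdot)$ the CI DAG model of Definition~\ref{def: DAG model} inside $\mathbb D_X$ for a fixed $X=(X_i)_{i\in[m]}$. I would also note that Lemma~\ref{lem: markov equals model} itself rests on Theorem~\ref{thm: factorization DAG}, which identifies $\mathcal M(\mathcal G)$ with $\mathcal M_G(\mathcal G)$, and on Theorem~\ref{thm: faithfulness}, which supplies a faithful distribution for the converse direction. Both are stated earlier and may be assumed, so no new probabilistic input is required.

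Since both ingredients are assumed, there is no real obstacle; the corollary is a direct two-line deduction. If one instead wanted a self-contained argument, the hard part would be Theorem~\ref{thm: VP}, in particular showing that DAGs with the same skeleton and v-structures have the same d-separations. There I would use Lemma~\ref{lem: moral MP} to compare the moral graphs of ancestral subgraphs, which are determined by the skeleton and v-structures up to the choice of ancestral set. For the converse, I would exhibit a distinguishing d-separation built from a missing edge or an unshielded collider.
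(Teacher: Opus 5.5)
Your proposal is correct and matches the paper's argument exactly: the corollary is obtained by composing the biconditional of Lemma~\ref{lem: markov equals model} (model equivalence if and only if Markov equivalence) with Theorem~\ref{thm: VP} (Markov equivalence if and only if same skeleton and v-structures). Your optional self-contained sketch of Theorem~\ref{thm: VP} is not needed, since the paper simply cites Frydenberg and Verma--Pearl for it; if you did pursue it, note that the ancestral sets $\an(A\cup B\cup C)$ can differ between Markov equivalent DAGs, so the moral-graph comparison would require a real argument rather than a direct identification.
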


Corollary~\ref{cor: VP} provides a concrete answer to our question above.  
Specifically, it states that, using only the CI relations satisfied by $\mathbb P\in \mathcal M(\mathcal G)$, the most we can hope to learn about the unknown causal DAG $\mathcal G$ is the locations of the edges representing direct causal effects (e.g. the skeleton of $\mathcal G$) and the direction of some subset of the edges that are fixed by the locations of the v-structures. 
The following gives an example. 

\begin{example}
    \label{ex: CPDAG}
    Let $\mathcal G = (V,E)$ be the following DAG on vertex set $V = \{1,2,3,4,5,6\}$. 
    \begin{center}
    \begin{tikzpicture}[thick, scale=0.6]
    \node[circle, draw, inner sep=1pt, minimum width=1pt] (1) at (0,0)  {$1$};
    \node[circle, draw, inner sep=1pt, minimum width=1pt] (2) at (2,0) {$2$};
    \node[circle, draw, inner sep=1pt, minimum width=1pt] (3) at (0,-2) {$3$};
    \node[circle, draw, inner sep=1pt, minimum width=1pt] (4) at (2,-2) {$4$};
    \node[circle, draw, inner sep=1pt, minimum width=1pt] (5) at (4,-2) {$5$};
    \node[circle, draw, inner sep=1pt, minimum width=1pt] (6) at (3,-3.5) {$6$};
    
    \draw[->, very thick] (1) -- (2);
    \draw[->, very thick] (1) -- (3);
    \draw[->, very thick] (2) -- (4);
    \draw[->, very thick] (3) -- (4);
    \draw[->, very thick] (4) -- (5);
    \draw[->, very thick] (4) -- (6);
    \draw[->, very thick] (5) -- (6);
    \end{tikzpicture}
    \end{center}
    This DAG contains precisely one v-structure $(2,4,3)$, and it has the following skeleton
    \begin{center}
    \begin{tikzpicture}[thick, scale=0.6]
    \node[circle, draw, inner sep=1pt, minimum width=1pt] (1) at (0,0)  {$1$};
    \node[circle, draw, inner sep=1pt, minimum width=1pt] (2) at (2,0) {$2$};
    \node[circle, draw, inner sep=1pt, minimum width=1pt] (3) at (0,-2) {$3$};
    \node[circle, draw, inner sep=1pt, minimum width=1pt] (4) at (2,-2) {$4$};
    \node[circle, draw, inner sep=1pt, minimum width=1pt] (5) at (4,-2) {$5$};
    \node[circle, draw, inner sep=1pt, minimum width=1pt] (6) at (3,-3.5) {$6$};
    
    \draw[-, very thick] (1) -- (2);
    \draw[-, very thick] (1) -- (3);
    \draw[-, very thick] (2) -- (4);
    \draw[-, very thick] (3) -- (4);
    \draw[-, very thick] (4) -- (5);
    \draw[-, very thick] (4) -- (6);
    \draw[-, very thick] (5) -- (6);
    \end{tikzpicture}
    \end{center}
    Any DAG that is Markov equivalent to $\mathcal G$ has this skeleton and precisely the v-structure $(2,4,3)$.  
    This implies that no Markov equivalent DAG can contain the edges $5\to 4$ or $6\to 4$. 
    In other words, all DAGs that are Markov equivalent to $\mathcal G$ contain the edges $4\to 5$ and $4\to 6$.  
    For all other edges in the skeleton of $\mathcal G$, we can find a DAG Markov equivalent to $\mathcal G$ where the edge points the opposite direction.  Hence, these two edges and the edges in the v-structure constitute all edges whose directions can be determined from only the CI relations encoded in the model $\mathcal M(\mathcal G)$. 
    Drawing these edges into the skeleton produces what is referred to as a \emph{complete partially directed acyclic graph} (CPDAG) \cite{HB12} or \emph{essential graph} \cite{AMP97} of $\mathcal G$. 
    \begin{center}
    \begin{tikzpicture}[thick, scale=0.6]
    \node[circle, draw, inner sep=1pt, minimum width=1pt] (1) at (0,0)  {$1$};
    \node[circle, draw, inner sep=1pt, minimum width=1pt] (2) at (2,0) {$2$};
    \node[circle, draw, inner sep=1pt, minimum width=1pt] (3) at (0,-2) {$3$};
    \node[circle, draw, inner sep=1pt, minimum width=1pt] (4) at (2,-2) {$4$};
    \node[circle, draw, inner sep=1pt, minimum width=1pt] (5) at (4,-2) {$5$};
    \node[circle, draw, inner sep=1pt, minimum width=1pt] (6) at (3,-3.5) {$6$};
    
    \draw[-, very thick] (1) -- (2);
    \draw[-, very thick] (1) -- (3);
    \draw[->, very thick] (2) -- (4);
    \draw[->, very thick] (3) -- (4);
    \draw[->, very thick] (4) -- (5);
    \draw[->, very thick] (4) -- (6);
    \draw[-, very thick] (5) -- (6);
    \end{tikzpicture}
    \end{center}
    The CPDAG represents the maximum amount of ``causal'' structure that can be learned from the CI relations of $\mathbb P\in \mathcal M(\mathcal G)$ when $\mathcal G$ is interpreted causally. 
    In other words, the CPDAG provides the answer to our question above. 
\end{example}

It is also natural to partition the space of all DAGs $\mathbb{DAG}$ into Markov equivalence classes (MECs). 
Specifically, the \emph{Markov equivalence class} of $\mathcal G \in \mathbb{DAG}$ is the set of all DAGs that are Markov equivalent to $\mathcal G$. 
It is a nice exercise to verify that the DAG in Example~\ref{ex: CPDAG} has an MEC consisting of six elements. 
Note that the CPDAG construction from Example~\ref{ex: CPDAG} is well-defined for any DAG \cite{AMP97}. 
Hence, the CPDAG can be interpreted as a graphical representation of an MEC of DAGs. 

As a final note, we can see now that the CI DAG model family $\mathcal F_{\mathbb{DAG}}$ contains exactly one model for every Markov equivalence class of DAGs. 
In particular, Corollary~\ref{cor: VP} provides a complete answer to the model distinguishability question in the graphical models program for the graphical model family $\mathcal F_{\mathbb{DAG}}$. 
This answer is also the foundation of many algorithms for the model selection question for $\mathcal F_{\mathbb{DAG}}$, since it reduces learning a DAG to testing CI relations that recover the skeleton and v-structures of the DAG underlying the data-generating distribution (for instance, the PC algorithm in \cite{spirtes}).

\subsection{Conditional independence mixed graph models}
\label{subsubsec: ancestral models}

Graphs that include bidirected edges $i\leftrightarrow j$ are especially useful when modeling variables that are related by the effects of a set of \emph{unobserved confounders} $U$.
We can think of this as building a dependence structure representing a marginal distribution $\mathbb P_{[m]}$ for $X_{[m]} =(X_i)_{i\in[m]}$ of a distribution $\mathbb P$ for $X = (X_i)_{i\in{[m]\cup U}}$. 
For instance, for the following graphs $\mathcal G$ and $\mathcal H$, if $\mathbb P\in\mathcal M(\mathcal G)$ then we would want $\mathbb P_{\{1,2\}}\in\mathcal M(\mathcal H)$: 
\begin{center}
\begin{tikzpicture}[thick, scale=0.6]
    \node[circle, draw, inner sep=1pt, minimum width=1pt] (1) at (0,0)  {$1$};
    \node[circle, draw, inner sep=1pt, minimum width=1pt] (2) at (2,1) {$3$};
    \node[circle, draw, inner sep=1pt, minimum width=1pt] (3) at (4,0) {$2$};
    
    \node[circle, draw, inner sep=1pt, minimum width=1pt] (4) at (8 + 4,0.5) {$1$};
    \node[circle, draw, inner sep=1pt, minimum width=1pt] (5) at (10 + 4,0.5) {$2$};
    
    \draw[<-, very thick] (1) -- (2);
    \draw[<-, very thick] (3) -- (2);
    
    \draw[<->, very thick] (4) -- (5);

    \node at (-1, 0.5) {$\mathcal G = $};
    \node at (11, 0.5) {$\mathcal H = $};

    \node at (2,-1) {$\mathbb P \in\mathcal M(\mathcal G)$};
    \node at (6 + 1,-1) {$\Longrightarrow$};
    \node at (9 + 3.5,-1) {$\mathbb P_{\{1,2\}}\in \mathcal M(\mathcal H)$};
    \end{tikzpicture}  
\end{center}

One may hope that we do not need to introduce new types of edges for this to work out in general; i.e., that we could simply use a \emph{directed} edge in the graph $\mathcal H$ above.  
However, given a DAG $\mathcal G = ([m]\cup U, E)$ there is not always a DAG $\mathcal H = ([m], E')$ such that $\mathbb P_{[m]}\in \mathcal M(\mathcal H)$ whenever $\mathbb P\in \mathcal M(\mathcal G)$. 

One family that is closed under marginalization in this way are the CI models defined for \emph{directed ancestral graphs} \cite{richardson}.  
These are the directed mixed graphs $\mathcal G = ([m], E)$ satisfying
\begin{enumerate}
    \item $\mathcal G$ contains no directed cycles, and
    \item if $i\leftrightarrow j\in E$ then there is no directed path in $\mathcal G$ between $i$ and $j$. 
\end{enumerate}
To assign a CI model $\mathcal M(\mathcal G)$ to an ancestral graph, we use the global Markov property for DAGs, but we replace d-separation with \emph{m-separation}.
Fortunately, m-separation is precisely the same thing as d-separation, but applied to directed ancestral graphs.  
That is, we simply add the following subgraphs to the list in item~(1) of Definition~\ref{def: d-separation}: 
\[
v_{k-1} \leftrightarrow v_k \rightarrow v_{k+1}, \qquad v_{k-1} \leftarrow v_k \leftrightarrow v_{k+1},
\]
and the following subgraphs to item~(2):
\[
v_{k-1} \leftrightarrow v_k \leftrightarrow v_{k+1}, \qquad v_{k-1} \leftrightarrow v_k \leftarrow v_{k+1}, \qquad v_{k-1} \rightarrow v_k \leftrightarrow v_{k+1}.
\]
Hence, the \emph{directed ancestral CI model} for $\mathcal G$ is 
\[
\mathcal M(\mathcal G) = \{\mathbb P\in \mathbb D_X : \mathbb P \textrm{ entails } X_A\independent X_B \mid X_C \textrm{ whenever } X_A\perp_\mathcal G X_B \mid X_C\}. 
\]
Letting $\mathbb{DAN}$ denote the set of all directed ancestral graphs, the \emph{directed ancestral CI graphical model family} is
\[
\mathcal F_{\mathbb{DAN}} = \{\mathcal M(\mathcal G) : \mathcal G\in \mathbb{DAN}\}. 
\]
The theory of directed ancestral graphs (and, more generally, \emph{ancestral graphs}) was developed by Richardson and Spirtes in \cite{richardson}, and a characterization of model equivalence was given in \cite{ali}.
In recent years, ancestral graphs have become popular in the field of causality since they allow for the incorporation of latent confounders, which are very natural to expect in a causal system. 
More generally, Sadeghi and Lauritzen extended the notion of m-separation to define CI graphical models for general mixed graphs. 
For more details, we refer the reader to their paper  \cite{SD14}.

\section{Parametric Graphical Models}
\label{subsec: parameteric GMs}
In most scenarios, we do not model our data based solely on conditional independence without also imposing some additional parametric assumptions. 
In other words, it is not common to model the unknown data-generating distribution using a distribution $\mathbb P$ where the only assumptions are that $\mathbb P\in \mathbb D_X$ and $\mathbb P$ satisfies some specified set of CI relations.  
This is for several reasons. 

Typically, adding some parametric assumptions on our model provides us with interpretable parameters that can be soundly estimated as functions of the observed data.  
This lets us make inferences about the data-generating scenario that translate into interpretable predictions. 
For DAGs, it is often desirable to have parameters with a natural causal interpretation, such as a value that represents the direct effect of one variable on another.  
Passing to such parametric models can also allow us to prove valuable properties about our estimators for these effects, such as consistency and unbiasedness. 
In the case of DAG models, a well-chosen parameterization can even resolve the lack of structural identifiability exhibited for CI DAG models in Section~\ref{subsubsec: CI DAG MECs}. 
This is particularly important when interpreting the edges of the DAG causally. 

For these reasons, it is common to work with \emph{submodels} of the conditional independence models $\mathcal M(\mathcal G)$ for undirected,  directed or mixed graphs that are produced by intersecting $\mathcal M(\mathcal G)$ with a parametric subfamily of $\mathbb D_X$. 

In this setting, each graph $\mathcal G$ in a family of graphs $\mathbb G$ is assigned a parameter space $\Theta\subseteq \mathbb R^d$ and a function $f$ that, for each $\theta\in \Theta$, parametrizes a distribution in the graphical model. %parameterizing the distributions in the model using the parameters $\theta\in \Theta$. 
Throughout, we use the map
\[
\Psi: \mathcal G \mapsto (f, \Theta)
\]
to denote this assignment, $\mathcal M_{f,\Theta}(\mathcal G)$ denotes the resulting \emph{parametric graphical model} for $\mathcal G$, and 
\[
\mathcal F_{\mathbb G, \Psi} = \{\mathcal M_{f,\Theta}(\mathcal G) : \mathcal G \in \mathbb G\}.
\]
is the \emph{parametric graphical model family}. 

Depending on our choices for the family of graphs $\mathbb G$ and the parameterization assignment $\Psi$, we obtain different families $\mathcal F_{\mathbb G, \Psi}$ that are suited to different types of data problems. 
Hence, for each $\mathcal F_{\mathbb G, \Psi}$, we require solutions to the graphical models program. 
We now highlight some key examples of parametric graphical model families and their status regarding the graphical models program. % and the specific properties they afford us in regard to inference. 

\subsection{Undirected parametric graphical models}
\label{subsubsec: UPGMs}
We have already seen one of the most popular examples of a parametric undirected graphical model family in Example~\ref{ex: UGGMs}. 
These are the \emph{undirected Gaussian graphical models}. 
Recall from Section~\ref{subsec: probability} that $\mathcal N_{0,m}\subseteq \mathbb D_X$ denotes the set of all multivariate normal distributions for variables $X = (X_i)_{i\in[m]}$ with mean vector $0$. 
The undirected Gaussian graphical model is obtained by intersecting the undirected CI graphical model $\mathcal M(\mathcal G)$ (Definition~\ref{def: UG model}) with the set $\mathcal N_{0,m}$:
\begin{definition}
    \label{def: undirected gaussian GM}
    Let $\mathcal G =([m], E)\in \mathbb{UG}$. The \emph{undirected Gaussian graphical model} for $\mathcal G$ is
    \[
    \mathcal M_{\textrm{Gauss}}(\mathcal G) = \mathcal M(\mathcal G)\cap \mathcal N_{0,m}.
    \]
\end{definition}

Note that, in practice, the restriction to mean $0$ normal distributions is mild since one often centers the data prior to building a model.  
In this case every normal distribution $\mathcal N(0, \Sigma)\in \mathcal N_{0,m}$ is completely determined by its covariance matrix.  
In Example~\ref{ex: UGGMs}, the undirected Gaussian graphical model for $\mathcal G$ is equivalently described as 
\begin{equation}
\label{eqn: UGGM equiv}
\mathcal M_{\textrm{Gauss}}(\mathcal G) = \{\mathcal N(0,\Sigma) : \Sigma^{-1} = K\in \mathcal L_\mathcal G\},
\end{equation}
where
\begin{equation}
\label{eqn: UG concentration space}
\mathcal L_\mathcal G = \{ K = (k_{ij})_{i,j=1}^m\in \textrm{PD}_m : k_{ij} = 0 \textrm{ whenever } i- j \notin E\}.  
\end{equation}
In other words, the undirected Gaussian graphical model is obtained, geometrically, by considering the set of all concentration matrices given by intersecting the positive definite cone $\textrm{PD}_m$ with the linear subspace in $\textrm{Sym}(\mathbb R^m)$ that sets all coordinates corresponding to nonedges of $\mathcal G$ equal to zero. 

\begin{remark}[The parameterizing map for $\mathcal M_{\textrm{Gauss}}(\mathcal G)$]
    \label{rem: geometric parametrization UGG}
With the help of the adjugate formula for computing a matrix inverse, we can slightly rephrase~\eqref{eqn: UGGM equiv} to make explicit the functional parameterization of the model.
Specifically, let
\begin{equation}
\label{eqn: UG cov param}
\varphi_\mathcal G : \mathcal L_\mathcal G \to \textrm{PD}_m; \qquad \varphi_\mathcal G(K) = \left(\frac{(-1)^{i+j}|K_{[m]\setminus\{j\}, [m]\setminus\{i\}}|}{|K|}\right)_{i,j=1}^m, 
\end{equation}
where $M_{A, B}$ denotes the submatrix of $M$ with rows and column indices, respectively in the sets $A$ and $B$, and $|M|$ denotes the matrix determinant.
The following rephrasing makes the role of this (strictly `geometric') parameterization map more apparent, a fact that will be valuable in Section~\ref{sec: algebra}:
\begin{equation}
\label{eqn: UGGM algebraic}
\mathcal M_{\textrm{Gauss}}(\mathcal G) = \{\mathcal N(0,\Sigma) : \Sigma\in \varphi_\mathcal G(\mathcal L_\mathcal G)\}.
\end{equation}
Using our notation for a general parametric graphical model family $\mathcal F_{\mathbb G, \Psi}$, we could take the assignment $\Psi$ for this family to be $\Psi: \mathcal G \mapsto (\varphi_\mathcal G, \mathcal L_\mathcal G)$. 
Hence, $\mathcal F^{\textrm{Gauss}}_{\mathbb{UG}} = \mathcal F_{\mathbb{UG}, \Psi}$. 
Notice that, while the function $\varphi_\mathcal G$ and parameter space $\mathcal L_\mathcal G$ depend on the graph, these choices have the same form for each model in the family.  
\end{remark}

This yields our first \emph{parametric graphical model family}
\[
\mathcal F_{\mathbb{UG}}^{\textrm{Gauss}} = \{\mathcal M_{\textrm{Gauss}}(\mathcal G) : \mathcal G \in \mathbb{UG}\}. 
\]
We briefly describe the status of the graphical models program for $\mathcal F_{\mathbb{UG}}^{\textrm{Gauss}}$. 

\paragraph{\emph{Graphical Models Program Question~1 (Model distinguishability):}}

The question of model distinguishability is completely settled for $\mathcal F_{\mathbb{UG}}^{\textrm{Gauss}}$. 
As for undirected CI graphical models, every undirected graph $\mathcal G$ defines a unique model $\mathcal M(\mathcal G)\in \mathcal F_{\mathbb{UG}}^{\textrm{Gauss}}$.
This is also true for undirected CI graphical models, so it may be tempting to say the result follows from the same result for the undirected CI graphical model family.  
However, care should be taken since $\mathcal M_{\textrm{Gauss}}(\mathcal G)$ is a subset of $\mathcal M (\mathcal G)$, and it is entirely possible that two subsets of two non-equal sets are equal.  
Fortunately, it is a straightforward exercise to verify this does not happen in this case. 

\paragraph{\emph{Graphical Models Program Question~2 (Parameter identifiability and inference):}}

The question of parameter identifiability can also be seen as settled. 
The model $\mathcal M_{\textrm{Gauss}}(\mathcal G)$ satisfies a strong property known as \emph{global identifiability} \cite{sullivant2023} since the map $\varphi_\mathcal G$ in Remark~\ref{rem: geometric parametrization UGG} is injective.  
The model-defining parameters are the concentration parameters $k_{ij}$ in the matrix $K$. 
For $\mathcal G\in \mathbb{UG}$ and a distribution $\mathcal N(0,\Sigma)\in\mathcal M_{\textrm{Gauss}}(\mathcal G)$ the parameters $k_{ij}$ can be uniquely recovered by simply applying Cramer's rule to $\Sigma$:
\[
k_{ij} = \frac{(-1)^{i+j}|\Sigma_{[m]\setminus j, [m]\setminus i}|}{|\Sigma|}.
\]
There are also methods available for parameter inference, such as \emph{iterative proportional scaling} \cite{Speed1986} or methods due to Lauritzen \cite{Lauritzen1996} that utilize the factorization of the model according to the Hammersley-Clifford Theorem (Theorem~\ref{thm: hammersley-clifford}). 

\paragraph{\emph{Graphical Models Program Question~3 (Model selection):}}

Here we use the terminology \emph{model selection} to refer to the process of selecting a model $\mathcal M_{\textrm{Gauss}}(\mathcal G)\in \mathcal F_{\mathbb{UG}}^{\textrm{Gauss}}$ that is fitting to the observed data.  
Note that this amounts to choosing a undirected graph $\mathcal G$ that fits the observed dependence structure.  
A model selection algorithm may also return both a model $\mathcal M_{\textrm{Gauss}}(\mathcal G)$ and a fitting candidate distribution $\mathbb P\in\mathcal M_{\textrm{Gauss}}(\mathcal G)$ for the observed data. 

Undirected Gaussian graphical models are particularly popular in high-dimensional statistics where testing conditional independence relations via statistical hypothesis tests becomes an unreliable way of estimating a dependence structure in the data. 
The parameterization of undirected Gaussian graphical models in Remark~\ref{rem: geometric parametrization UGG} allows us to construct a natural estimator for an undirected graph representation of the CI dependence structure in Gaussian data known as the \emph{graphical lasso estimator}:
\[
    \textrm{argmax}_{K\in \textrm{PD}_m}\left(\log \det(K) - \textrm{trace}(S K) - \lambda\sum_{1\leq i \leq j \leq m}|k_{ij}|\right),
\]
where $\lambda > 0$ is a penalization parameter, and $S$ is the sample covariance matrix. 
Making $\lambda$ large encourages the estimator to force more entries of the covariance matrix to $0$. 
This results in a sparser graph since the zeros in the concentration matrix correspond to the nonedges of $\mathcal G$. 

\subsubsection{Colored undirected Gaussian graphical models}
\label{subsubsec: colored UGs}
% \paragraph{\emph{Colored undirected Gaussian graphical models:}}

More recently, expansions of the undirected Gaussian graphical model family have appeared.  
The logic here is that $\mathcal M_{\textrm{Gauss}}(\mathcal G)$ is defined by a simple linear section of the positive definite cone $\mathcal L_\mathcal G$ given by setting the concentration coordinates $k_{ij}$ equal to zero whenever $i - j \notin E$. 
This is easily generalized by taking smaller linear subspaces defined by additional linear constraints $k_{ij} = k_{st}$ for pairs of edges $i - j, s - t\in E$ or $k_{ii} = k_{jj}$ for pairs of nodes $i,j\in[m]$. 
Combinatorially, the additional constraints can be represented coloring edges and nodes in $\mathcal G$ with the same color whenever their concentration parameters are set equal in the model. 
For example, 
\begin{center}
\begin{center}
    \begin{tikzpicture}[thick, scale=0.6]
    \node[circle, draw, fill=green!50, inner sep=1pt, minimum width=1pt] (1) at (0,0)  {$1$};
    \node[circle, draw, fill=green!50, inner sep=1pt, minimum width=1pt] (2) at (2,0) {$2$};
    \node[circle, draw, fill=yellow!50, inner sep=1pt, minimum width=1pt] (3) at (0,-2) {$3$};
    \node[circle, draw, fill=yellow!50, inner sep=1pt, minimum width=1pt] (4) at (2,-2) {$4$};
    
    \draw[-,red!50, very thick] (1) -- (2);
    \draw[-, blue!50, very thick] (1) -- (3);
    \draw[-, blue!50, very thick] (2) -- (4);
    \draw[-, red!50, very thick] (3) -- (4);
    \end{tikzpicture}
    \end{center}
\end{center}
encodes the additional constraints $k_{12} = k_{34}, k_{13} = k_{24}, k_{11} = k_{22}, k_{33} = k_{44}$. 
The result is an undirected Gaussian graphical model $\mathcal M(\mathcal G, c)$ for each graph $\mathcal G$ and each coloring of $c$, creating a much larger parametric graphical model family
\[
\mathcal F_{\textrm{c-}\mathbb{UG}}^{\textrm{Gauss}} = \{\mathcal M(\mathcal G, c): (\mathcal G,c) \in \textrm{c-}\mathbb{UG}\}, %\in \mathbb G, c \textrm{ a coloring of } \mathcal G\}.
\]
where $\textrm{c-}\mathbb{UG}$ denotes the set of all colored undirected graphs; i.e., pairs $(\mathcal G, c)$ where $\mathcal G\in \mathbb{UG}$ and $c$ is a coloring of $\mathcal G$. 
It is not hard to see that $\mathcal F_{\mathbb G}^{\textrm{Gauss}}\subset \mathcal F_{\textrm{c-}\mathbb{UG}}^{\textrm{Gauss}}$. 
In fact, much of the graphical models program for $\mathcal F_{\mathbb G}^{\textrm{Gauss}}$ outlined above lifts to this larger family \cite{HS08}. 

\subsubsection{Parameterizing discrete undirected graphical models}
\label{subsubsec: disc UGs}

A third family of parametric undirected graphical models arises when the variables $X_i$ in $X =(X_i)_{i\in[m]}$ are all discrete and the base measure is the counting measure.  
In this case, the parameterization of the model for $\mathcal G\in \mathbb{UG}$ comes directly from the factorization of the model obtained via the Hammersley-Clifford Theorem (Theorem~\ref{thm: hammersley-clifford}). Namely, the \emph{discrete undirected graphical model} for $\mathcal G$ has mass function given by
\[
f_X(x) = \frac{1}{Z(\theta)}\prod_{C\in \mathcal C(\mathcal G)}\theta_{x_C}^{(C)}, 
\]
where, for every maximal clique $C$ in $\mathcal G$ and every marginal outcome $x_C\in \mathcal X_C$, we have a parameter $\theta_{x_C}^{(C)} >0$. 
This model is an example of a \emph{hierarchical log-linear model}. 
We refer the reader to \cite{sullivant2023} for more details.

\subsection{Parametric DAG models}
\label{subsubsec: parametric DAGs}

It is equally as natural to construct parametric graphical models that are submodels CI DAG models. 
One of the most common ways to specify a parametric DAG model for $\mathcal G\in \mathbb{DAG}$ is to use \emph{noisy functional relations}, producing what is commonly called a \emph{structural equation model} (SEM). 

% \paragraph{\emph{Structural equation models:}}

For a DAG $\mathcal G = ([m],E)$, we can specify a SEM as follows: 
Let $\varepsilon_1,\ldots, \varepsilon_m$ be a collection of mutually independent error variables and $f_1, \ldots, f_m$ a collection of functions.
The $\varepsilon_i$ and $f_i$ may each be specified in terms of some parameters. 
We collect these parameters in a vector $\theta\in \Theta$, where $\Theta$ denotes the set of all allowed choices for $\theta$ (i.e.~the parameter space). 
The \emph{structural equation model} (SEM) for these choices is the set of all
distributions $\mathbb P$ for $X = (X_i)_{i\in[m]}$ satisfying the system of \emph{structural equations}
\begin{equation}
    \label{eqn: SEM}
    X_i = f_i(X_{\pa(i)}, \varepsilon_i) \qquad \textrm{for all } i\in [m]. 
\end{equation}
In other words, we obtain the \emph{parametric DAG model} for $\mathcal G$
\[
\mathcal M_{f,\varepsilon, \Theta}(\mathcal G) = \{ \mathbb P\in \mathbb D_X : \mathbb P \textrm{ satisfies~\eqref{eqn: SEM}} \textrm{ for some } \theta \in \Theta\}.
\]
Notice that the functions $f = (f_1,\ldots, f_m)$, errors $\varepsilon = (\varepsilon_1,\ldots, \varepsilon_m)$ and even the parameters $\Theta$ may be specific to the graph $\mathcal G$.  
A parametric family usually specifies a general form for each, via a map
\[
\Psi: \mathcal G \mapsto (f,\varepsilon, \Theta)
\]
that assigns each graph in $\mathbb{DAG}$ a triple describing its associated parameterization. 
Hence, we can denote the resulting \emph{parametric DAG model family} as
\[
\mathcal F_{\mathbb{DAG}, \Psi} = \{\mathcal M_{f,\varepsilon, \Theta}(\mathcal G) : \mathcal G \in \mathbb{DAG} \textrm{ where } \Psi(\mathcal G) = (f,\varepsilon, \Theta)\}.
\]

\begin{remark}
    \label{rem: subscripts}
    Note that while the subscripts here seem cumbersome, they are each important since the properties of the parametric graphical model family $\mathcal F_{\mathbb G, \Psi}$ are sensitive to each of $\mathbb G$ and $\Psi$. 
    Indeed, replacing $\mathbb G$ with a subset $\mathbb G' \subsetneq \mathbb G$ of DAGs, changing the $f$, distribution for $\varepsilon$ or even the parameter space $\Theta$ assigned to each graph can drastically change the solutions to the graphical models program. 
\end{remark}

Due to the recursive structure of the DAG, a distribution $\mathbb P\in \mathcal M_{f,\varepsilon, \Theta}(\mathcal G)$ amounts to a transformation of the error distribution for $\varepsilon$. 
It is also not hard to see that any distribution $\mathbb P\in \mathcal M_{f,\varepsilon,\Theta}(\mathcal G)$ satisfies the local Markov property for $\mathcal G$ given in Definition~\ref{def: DAG MPs}~(2) \cite[Theorem 1.4.1]{pearl2009}. 
In particular, we have that
\[
\mathcal M_{f,\varepsilon, \Theta}(\mathcal G)\subseteq \mathcal M(\mathcal G),
\]
where $\mathcal M(\mathcal G)$ is the CI DAG model in Definition~\ref{def: DAG model}.

Parametric DAG models specified by SEMs are a prominent tool in fields such as economics and the social sciences \cite{pearl2009}.  
More recently, they have become fundamental models of study in the modern theory of causality \cite{pearl2009, peters2017} largely due to the fact that the functional equations offer some measure of interpretability of the notion of a direct effect, while also allowing for a high degree of flexibility in their specification. 

When one would like to interpret the edges of the DAG $\mathcal G$ causally, working with a well-chosen parametric DAG model $\mathcal M_{f,\varepsilon, \Theta}(\mathcal G)$ can offer certain advantages. 
Simple choices of the functions $f$ and the errors $\varepsilon$ usually produce a family where the graphical models program can be thoroughly resolved.

\subsubsection{Linear Gaussian DAG models}
\label{subsubsec: linear Gaussian DAGs}
% \paragraph{\emph{Linear Gaussian DAG models:}}

A classic example of a parametric DAG model family are the \emph{linear Gaussian DAG models}. 
These models are produced by taking the functions $f_i$ to be linear and the errors $\varepsilon_i\sim\mathcal N(0,\omega_i)$ to be Gaussian with mean $0$ and variance $\omega_i>0$. 
(Note again that the mean $0$ assumption is relatively mild as long as we are willing to center our data.)
In this case, the structural equations in~\eqref{eqn: SEM} become
\begin{equation}
\label{eqn: linear SEM}
X_i = \sum_{i=1}^m\lambda_{ki}X_k + \varepsilon_i \qquad \textrm{for all } i\in[m],
\end{equation}
where $\lambda_{ki}\in\mathbb R$ with $\lambda_{ki}= 0$ when $k\to i\notin E$ and $\omega_i\in \mathbb R_{>0}$. 
Hence, each distribution in the model is parameterized via a vector
\begin{equation}
\label{eqn: Gauss DAG params}
(\lambda, \omega) = ((\lambda_{kj})_{k\to i\in E}, (\omega_i)_{i\in[m]}) \in \Theta = \mathbb R^{E}\times \mathbb R^m_{>0}.
\end{equation}
Collecting the parameters $\lambda$ and $\omega$ into, respectively, a matrix $\Lambda = (\lambda_{ki})_{k,i=1}^m$ and a diagonal matrix $\Omega = \textrm{diag}(\omega_1,\ldots, \omega_m)$ we may rewrite the equations in~\eqref{eqn: linear SEM} as
\begin{equation}
    \label{eqn: linear matrix SEM}
    X = \Lambda^TX + \varepsilon.
\end{equation}
From~\eqref{eqn: linear matrix SEM}, it is apparent that $X\sim \mathcal N(0, \Sigma)$ where 
\begin{equation}
\label{eqn: covariance matrix}
\Sigma = (1 - \Lambda)^{-T}\Omega(1 - \Lambda)^{-1}. 
\end{equation}

The resulting parametric DAG model is defined as follows.
\begin{definition}
    \label{def: linear Gaussian DAG models}
    Let $\mathcal G = ([m],E)\in \mathbb{DAG}$. 
    The \emph{linear Gaussian DAG model} for $\mathcal G$ is the set of distributions
    \[
    \mathcal M_{\textrm{Gauss}}(\mathcal G) = \{\mathcal N(0,\Sigma) : \Sigma  \textrm{ satisfies~\eqref{eqn: covariance matrix} for some } (\lambda, \omega)\in\mathbb R^{E}\times \mathbb R^m_{>0}\}. 
    \]
    The \emph{linear Gaussian DAG model family} is 
    \[
    \mathcal F_\mathbb{DAG}^{\textrm{Gauss}} = \{\mathcal M_{\textrm{Gauss}}(\mathcal G) : \mathcal G \in\mathbb{DAG}\}. 
    \]
\end{definition}

It can also be useful to have an explicit formula for the coordinates $\sigma_{ij}$ of the covariance matrix $\Sigma = (\sigma_{ij})_{i,j=1}^m$ of a distribution $\mathcal N(0,\Sigma)\in\mathcal M_{\textrm{Gauss}}(\mathcal G)$. 
Such a formula is obtained via certain subgraphs of $\mathcal G$ called \emph{treks}. 

\begin{definition}
    \label{def: trek}
    Let $\mathcal G = ([m], E)\in \mathbb{DAG}$ and $i,j\in[m]$. 
    A \emph{trek} from $i$ to $j$ in $\mathcal G$ is a pair of directed paths $T = ((v_k)_{k\in[s]}, (w_k)_{k\in[\ell]})$ satisfying $v_1 = w_1$, $v_s = i$ and $w_\ell = j$.  
    We let $\textrm{top}(T) = v_1 = w_1$, and $\mathcal T(i,j)$ denote the set of all treks from $i$ to $j$ in $\mathcal G$. 
    For $T= ((v_k)_{k\in[s]}, (w_k)_{k\in[\ell]})\in\mathcal T(i,j)$, we define the \emph{trek monomial}
    \[
    m_T = \omega_{\textrm{top}(T)}\prod_{k\in[s-1]}\lambda_{v_kv_{k+1}}\prod_{k\in[\ell-1]}\lambda_{w_kw_{k+1}}. 
    \]
\end{definition}

\begin{example}
    \label{ex: trek}
    Let $\mathcal G = ([5], E)$ be the following DAG:
    \begin{center}
    \begin{tikzpicture}[thick, scale=0.6]
    \node[circle, draw, inner sep=1pt, minimum width=1pt] (1) at (0,0)  {$1$};
    \node[circle, draw, inner sep=1pt, minimum width=1pt] (2) at (2.1,-1.5) {$2$};
    \node[circle, draw, inner sep=1pt, minimum width=1pt] (3) at (1.25,-3) {$3$};
    \node[circle, draw, inner sep=1pt, minimum width=1pt] (4) at (-2.1,-1.5) {$4$};
    \node[circle, draw, inner sep=1pt, minimum width=1pt] (5) at (-1.25,-3) {$5$};
    
    \draw[<-, very thick] (1) -- (2);
    \draw[<-, very thick] (2) -- (3);
    \draw[->, very thick] (3) -- (5);
    \draw[<-, very thick] (1) -- (4);
    \draw[->, very thick] (4) -- (5);
    \end{tikzpicture}
    \end{center}
    There are two treks in $\mathcal G$ between nodes $1$ and $5$. They have the following trek monomials: 
    \[
    \omega_3\lambda_{32}\lambda_{21}\lambda_{35}, \qquad \omega_4\lambda_{41}\lambda_{45}. 
    \]
    There are four treks between $1$ and itself, with trek monomials
    \[
    \omega_1, \qquad \omega_2\lambda_{21}^2, \qquad \omega_4\lambda_{41}^2, \qquad \omega_3\lambda_{32}^2\lambda_{21}^2. 
    \]
\end{example}

The following result is commonly called the \emph{trek rule}. 

\begin{proposition}[Trek rule]
    \label{prop: trek rule}
    Let $\mathcal G = ([m],E)\in \mathbb{DAG}$ with associated covariance matrix 
    \[
    \Sigma = (\sigma_{ij})_{i,j=1}^m = (1- \Lambda)^{-T}\Omega(1- \Lambda)^{-1}
    \]
    as in~\eqref{eqn: covariance matrix}. Then for all $i,j\in[m]$
    \[
    \sigma_{ij} = \sum_{T\in\mathcal T(i,j)}m_T. 
    \]
\end{proposition}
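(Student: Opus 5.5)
The plan is to expand $(1-\Lambda)^{-1}$ as a finite geometric series and read off the entries combinatorially as sums over directed paths. Since $\mathcal G$ is a DAG, I would first fix a topological ordering of $[m]$. After relabelling, $\Lambda$ is strictly upper triangular, because $\lambda_{ki}\neq 0$ only if $k\to i\in E$. Hence $\Lambda$ is nilpotent with $\Lambda^m = 0$, and $(1-\Lambda)^{-1} = \sum_{n=0}^{m-1}\Lambda^n$. This also confirms that $1-\Lambda$ is invertible, so~\eqref{eqn: covariance matrix} is well defined.

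The key lemma is that for all $k,j\in[m]$,
\[
\big((1-\Lambda)^{-1}\big)_{kj} = \sum_{P} \prod_{u\to v\in P}\lambda_{uv},
\]
where $P$ ranges over all directed paths from $k$ to $j$ in $\mathcal G$, and the trivial path with $k=j$ contributes $1$. I would prove this by induction on $n$: the entry $(\Lambda^n)_{kj} = \sum_{u_1,\ldots,u_{n-1}}\lambda_{ku_1}\lambda_{u_1u_2}\cdots\lambda_{u_{n-1}j}$ is a sum over walks of length $n$ from $k$ to $j$, and only walks using edges of $E$ contribute. In a DAG every directed walk is a directed path, since a repeated vertex would create a directed cycle. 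Summing over $n$ therefore gives all directed paths, each counted exactly once by its length.

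The remaining step is to compute the product directly:
\[
\sigma_{ij} = \big((1-\Lambda)^{-T}\Omega(1-\Lambda)^{-1}\big)_{ij} = \sum_{t=1}^m \big((1-\Lambda)^{-1}\big)_{ti}\,\omega_t\,\big((1-\Lambda)^{-1}\big)_{tj}.
\]
Substituting the lemma turns each summand into a sum over pairs consisting of a directed path from $t$ to $i$ and a directed path from $t$ to $j$, weighted by $\omega_t$ times the product of the edge weights along both paths. This is exactly a trek $T\in\mathcal T(i,j)$ with $\mathrm{top}(T)=t$, and the weight is exactly $m_T$. Distinct pairs give distinct treks and every trek arises this way, so $\sigma_{ij} = \sum_{T\in\mathcal T(i,j)} m_T$.

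The only step needing care is the walk-to-path correspondence together with the bookkeeping of the transpose, that is, checking that $(1-\Lambda)^{-T}$ contributes the path ending at $i$, consistent with $X = \Lambda^T X+\varepsilon$. This is routine once the nilpotent expansion is in place.
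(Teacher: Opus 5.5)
Your proposal is correct and is the same argument as the paper's: use nilpotency to write $(1-\Lambda)^{-1}=\sum_{n}\Lambda^n$ as a finite sum, read $(\Lambda^n)_{kj}$ as a sum over directed paths of length $n$, and multiply out $(1-\Lambda)^{-T}\Omega(1-\Lambda)^{-1}$ entrywise to get pairs of paths with a common top. You also correctly identify $\Lambda$ as the nilpotent matrix (the paper writes $1-\Lambda$, which is a slip), and you fill in the walk-equals-path step and the transpose bookkeeping that the paper only asserts.
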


\begin{proof}
    Since $\mathcal G$ is acyclic, the matrix $(1- \Lambda)$ is nilpotent.  In particular, 
    \[
    (1-\Lambda)^{-1} = \frac{1}{1 - \Lambda} = \sum_{k = 0}^N\Lambda^k
    \]
    for some $N\geq 0$. 
    It can be checked that the $(i,j)$-th entry of the matrix power $\Lambda^k$ is the sum over the monomials $\prod_{t = 1}^{k}\lambda_{v_tv_{t +1}}$ for every directed path $(v_t)_{t\in[k+1]}$  from $i$ to $j$ in $\mathcal G$ of length $k$. 
    The result follows from simply analyzing the matrix multiplication $(1- \Lambda)^{-T}\Omega(1- \Lambda)^{-1}$ coordinatewise. 
\end{proof}

\begin{remark}
    \label{rem: trek map}
The trek rule in Proposition~\ref{prop: trek rule} allows us to give an explicit (geometric) parameterization of the set of covariance matrices for the distributions in $\mathcal M_{\textrm{Gauss}}(\mathcal G)$:  
\begin{equation}
    \label{eqn: trek map}
    \varphi_\mathcal G: \mathbb R^{E}\times \mathbb R^m_{>0} \to \textrm{Sym}(\mathbb R^m); \qquad \varphi_\mathcal G(\theta) = \left(\sum_{T\in\mathcal T(i,j)}m_T\right)_{i,j\in[m]}.
\end{equation}
Note that this map is the directed acyclic graph analog of the map for undirected graphs given in~\eqref{eqn: UG cov param}. 
In particular, we have that 
\[
\mathcal M_{\textrm{Gauss}}(\mathcal G) = \{\mathbb P\in \mathbb D_X : \mathbb P = \mathcal N(0,\Sigma) \textrm{ for some } \Sigma \in \varphi_\mathcal G(\mathbb R^E\times \mathbb R^m_{>0})\}. 
\]
\end{remark}

\begin{remark}
    \label{rem: treks and effects}
    One reason for the popularity of linear Gaussian DAG models is the ease of interpretability of the parameters of the model.  
    If one is willing to interpret the graph as a cause-effect network, then the parameter $\lambda_{ki}$ is a natural measure of the \emph{direct effect} of $X_k$ on $X_i$.  
    This gives a quantitative aspect to the qualitative representation of this direct effect via the edge $k\to i$ in $\mathcal G$. 
    Hence, when $\mathcal G$ is being interpreted causally, it is common to refer to the parameter $\lambda_{ki}$ as the direct effect of $X_k$ on $X_i$.  
    When a causal interpretation is not permitted, $\lambda_{ki}$ is often called a \emph{structural coefficient}. 
    
    Generalizing this, treks reveal how the DAG encodes the different reasons two variables $X_i$ and $X_j$ covary. Specifically, the covariance $\sigma_{ij}$ is sum over the products of these structural coefficients along any direct effect between $X_i$ and $X_j$ (e.g. $i\to j$), and \emph{indirect effect} between the two variables (e.g. a directed path from $i$ to $j$), as well as the effects of any common \emph{confounders} $X_k$ (e.g. treks from $i$ to $j$ with top node $k$). 
    The simplicity of the interpretation of the model parameters is one of the key reasons these models are now being widely used. 
\end{remark}

\subsubsection{Colored Gaussian DAG models}
\label{subsubsec: colored DAGs}
% \paragraph{\emph{Colored Gaussian DAG models:}}

Note that, similar to undirected Gaussian DAG models, we can produce a larger family of parametric DAG models by considering \emph{colored} DAGs. 
We let $\textrm{c-}\mathbb{DAG}$ denote the set of all colored DAGs. 

A \emph{colored DAG} is a pair $(\mathcal G, c)$ where $\mathcal G = ([m], E)$ is a DAG and $c$ is a coloring of its vertices and edges.  
To obtain the \emph{colored Gaussian DAG model} for $(\mathcal G, c)$, denoted $\mathcal M_{\textrm{Gauss}}(\mathcal G, c)$, we simply restrict the parameter space $\mathbb R^{E}\times \mathbb R^m_{>0}$ to 
\begin{equation}
\label{eqn: BPEC params}
\begin{split}
\Theta = \{(\lambda, \omega)\in \mathbb R^{E}\times \mathbb R^m_{>0} :& \lambda_{ij} = \lambda_{k\ell} \textrm{ if $i\to j$, $k\to \ell$ have the same color, and }\\
&\omega_i = \omega_j \textrm{ if $i, j$ have the same color}\}
\end{split}
\end{equation}
In particular, $\mathcal M_{\textrm{Gauss}}(\mathcal G, c) \subseteq \mathcal M_{\textrm{Gauss}}(\mathcal G)$ for every coloring $c$ of the DAG $\mathcal G$. 
% When $\mathbb G$ is the set of all colored DAGs $(\mathcal G, c)$, 
We denote the family of colored Gaussian DAG models as
\[
\mathcal F_{\textrm{c-}\mathbb{DAG}}^{\textrm{Gauss}} = \{ \mathcal M_{\textrm{Gauss}}(\mathcal G, c): (\mathcal G, c)\in \textrm{c-}\mathbb{DAG}\}. 
\]
Note that $F_{\mathbb{DAG}}^{\textrm{Gauss}} \subset \mathcal F_{\textrm{c-}\mathbb{DAG}}^{\textrm{Gauss}}$ since there is always a coloring of $\mathcal G$ in which every edge and vertex is assigned a different color.  

From a causal perspective, the colors have a natural interpretation.  Specifically, two edges $i \to j$ and $k\to \ell$ having the same color can be interpreted as $X_i$ and $X_k$ having similar effect on $X_j$ and $X_\ell$, respectively. 
The vertex colors simply represent that two of the error terms $\varepsilon_i\sim\mathcal N(0,\omega_i)$ have equal variances. 

Several families of colored Gaussian DAG models have appeared in the statistics literature. 
As we will see, this is because different colorings can induce different, desirable solutions to the questions in the graphical models program. 
For instance, Peters and B\"uhlmann \cite{PB14} studied the question of model distinguishability for the family of colored DAG models in which all edges have a \emph{distinct} color and all vertices have the \emph{same} color. 
In statistical language, these are the linear Gaussian DAG models with equal error variances. 
Makam et al. \cite{makam} studied a family of colored Gaussian DAG models called \emph{compatibly colored DAGs} to investigate the minimum sample size needed for the Maximum Likelihood Estimator (MLE) of the model parameters to exist.  

Some families of colored DAG models have been studied for the sake of embracing modeling interpretations of the colors.  
One example is the family of BPEC DAGs. 

\begin{definition}
    \label{def: BPEC DAG}
    A colored DAG $(\mathcal G, c)$ is called a \emph{Blocked Properly Edge-Colored} (BPEC) DAG if 
    \begin{itemize}
        \item all vertices of $\mathcal G$ have a distinct color, 
        \item for every edge $i\to j$ of $\mathcal G$ there is at least one other edge $k\to \ell$ in $\mathcal G$ with the same color, and 
        \item if $i\to j$ and $k\to \ell$ have the same color then $j= \ell$. 
    \end{itemize}
    We let $\mathbb{BPEC}$ denote the set of all BPEC DAGs, and  
    \[
    \mathcal F_{\mathbb{BPEC}} = \{\mathcal M_{\textrm{Gauss}}(\mathcal G, c) : (\mathcal G, c)\in \mathbb{BPEC}\}
    \]
    denote the parametric family of colored Gaussian DAG models for BPEC DAGs. 
\end{definition}
Thinking back to our causal interpretation of the edge colors, BPEC DAGs amount to modeling the hypothesis that, for all variables $X_j$, the direct effects $X_i$, $i\in\pa(j)$, of $X_j$ can be grouped into communities (or blocks) of variables that have similar effects on their common target $X_j$. 
This can be viewed as a causal model analogy of the Stochastic Block Models (SBMs), which are family of random graphs used for modelling community structures in network data (see for instance \cite{karwa}). 
The following depicts a colored DAG representing an equal variance model (left) and a BPEC DAG (right): 

\begin{center}
    \begin{tikzpicture}[thick, scale=0.6]
    \node[circle, fill=red!50, draw, inner sep=1pt, minimum width=1pt] (1) at (0,0)  {$1$};
    \node[circle, fill=red!50, draw, inner sep=1pt, minimum width=1pt] (2) at (2,0) {$2$};
    \node[circle, fill=red!50, draw, inner sep=1pt, minimum width=1pt] (3) at (0,-2) {$3$};
    \node[circle, fill=red!50, draw, inner sep=1pt, minimum width=1pt] (4) at (2,-2) {$4$};
    \node[circle, fill=red!50, draw, inner sep=1pt, minimum width=1pt] (5) at (0,-4) {$5$};
    
    \draw[->, blue!50, very thick] (1) -- (2);
    \draw[->, violet!50, very thick] (1) -- (3);
    \draw[->, cyan!50, very thick] (2) -- (4);
    \draw[<-, teal!50, very thick] (3) -- (4);
    \draw[->, orange!50, very thick] (3) -- (5);

    \node[circle, fill=red!50, draw, inner sep=1pt, minimum width=1pt] (b1) at (0 + 8,0)  {$1$};
    \node[circle, fill=yellow!50, draw, inner sep=1pt, minimum width=1pt] (b2) at (2 + 8,0) {$2$};
    \node[circle, fill=green!50, draw, inner sep=1pt, minimum width=1pt] (b3) at (0 + 8,-2) {$3$};
    \node[circle, fill=orange!50, draw, inner sep=1pt, minimum width=1pt] (b4) at (2 + 8,-2) {$4$};
    \node[circle, fill=cyan!50, draw, inner sep=1pt, minimum width=1pt] (b5) at (0 + 8,-4) {$5$};
    
    \draw[->, blue!50,  very thick] (b1) -- (b3);
    \draw[->, blue!50,  very thick] (b2) -- (b3);
    \draw[->, violet!50, very thick] (b1) -- (b4);
    \draw[->, teal!50, very thick] (b2) -- (b4);
    \draw[->, teal!50,  very thick] (b3) -- (b4);
    \draw[<-, blue!50,  very thick] (b3) -- (b5);
    \draw[->, violet!50,  very thick] (b5) -- (b4);

    \end{tikzpicture}
    \end{center}

\subsubsection{The graphical models program for linear Gaussian DAG models}
\label{subsubsec: GMP linear Gaussian DAGs}
Let's return to the (uncolored) Gaussian DAG model family $\mathcal F_\mathbb{DAG}^{\textrm{Gauss}}$.
The graphical models program for $\mathcal F_\mathbb{DAG}^{\textrm{Gauss}}$ can also be regarded as largely complete.  

\paragraph{\emph{Graphical Models Program Question~1 (Model distinguishability):}}

Just as for CI DAG models, two linear Gaussian DAG models $\mathcal M_{\textrm{Gauss}}(\mathcal G), \mathcal M_{\textrm{Gauss}}(\mathcal H)$ can be equal even if $\mathcal G \neq \mathcal H$. 
In fact, using the modified Cholesky decomposition of the covariance matrix $\Sigma$, one can prove the following.

\begin{proposition}
    \label{prop: Gauss model equivalence}
    Two linear Gaussian DAG models $\mathcal M_{\textrm{Gauss}}(\mathcal G), \mathcal M_{\textrm{Gauss}}(\mathcal H)$ are model equivalent if and only if $\mathcal G$ and $\mathcal H$ are Markov equivalent. 
\end{proposition}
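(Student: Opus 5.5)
The plan is to show that the linear Gaussian DAG model is exactly the set of Gaussian distributions in the CI DAG model:
\[
\mathcal M_{\textrm{Gauss}}(\mathcal G) = \mathcal M(\mathcal G)\cap \mathcal N_{0,m}.
\]
Once this identity is available, both directions of the proposition follow by arguments parallel to Lemma~\ref{lem: markov equals model}.

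\emph{Step 1: the identity.} The containment $\mathcal M_{\textrm{Gauss}}(\mathcal G)\subseteq \mathcal M(\mathcal G)$ is already recorded above, since every SEM distribution satisfies the local Markov property. For the reverse containment, take $\mathcal N(0,\Sigma)\in\mathcal M(\mathcal G)$ and fix a topological ordering $\pi$ of $\mathcal G$. Form the modified Cholesky decomposition of $\Sigma$ along $\pi$: regress each $X_{\pi_i}$ on its predecessors $X_{\textrm{pred}(\pi_i)}$. This writes $X_{\pi_i}=\sum_{k\in \textrm{pred}(\pi_i)}\beta_{k\pi_i}X_k+\varepsilon_{\pi_i}$, where the $\varepsilon_{\pi_i}$ are independent Gaussian residuals with positive variances $\omega_{\pi_i}$; positivity holds because $\Sigma$ is positive definite. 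The local Markov property, combined with decomposition as in the proof of Theorem~\ref{thm: factorization DAG}, gives
\[
X_{\pi_i}\independent X_{\textrm{pred}(\pi_i)\setminus\pa(\pi_i)}\mid X_{\pa(\pi_i)}.
\]
For Gaussians this conditional independence forces $\beta_{k\pi_i}=0$ whenever $k\notin\pa(\pi_i)$. Setting $\Lambda=(\beta_{ki})$ and $\Omega=\textrm{diag}(\omega)$ then yields parameters in $\mathbb R^E\times\mathbb R^m_{>0}$ with $\Sigma=(1-\Lambda)^{-T}\Omega(1-\Lambda)^{-1}$.

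\emph{Step 2: Markov equivalence implies model equivalence.} If $\mathcal G$ and $\mathcal H$ are Markov equivalent, Lemma~\ref{lem: markov equals model} gives $\mathcal M(\mathcal G)=\mathcal M(\mathcal H)$. Intersecting both sides with $\mathcal N_{0,m}$ and applying Step~1 gives $\mathcal M_{\textrm{Gauss}}(\mathcal G)=\mathcal M_{\textrm{Gauss}}(\mathcal H)$.

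\emph{Step 3: model equivalence implies Markov equivalence.} Suppose $\mathcal M_{\textrm{Gauss}}(\mathcal G)=\mathcal M_{\textrm{Gauss}}(\mathcal H)$, and let $A\perp_{\mathcal G}B\mid C$. Then every distribution in $\mathcal M_{\textrm{Gauss}}(\mathcal H)$ satisfies $X_A\independent X_B\mid X_C$. To conclude $A\perp_{\mathcal H}B\mid C$, I need a Gaussian distribution faithful to $\mathcal H$, which Theorem~\ref{thm: faithfulness} does not supply since it is not restricted to Gaussians. This is the main obstacle, and I would handle it by a genericity argument.

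For each d-connection $A\not\perp_{\mathcal H}B\mid C$, the Gaussian conditional independence $X_A\independent X_B\mid X_C$ holds exactly when the submatrix $\Sigma_{A\cup C,B\cup C}$ has rank $|C|$. By the trek rule (Proposition~\ref{prop: trek rule}), the minors of this submatrix are polynomials in $(\lambda,\omega)$. I would show that at least one such minor is not identically zero, by choosing parameters supported on a d-connecting path: set $\lambda=1$ on its edges and on directed paths from each collider into $C$, and $\lambda=0$ elsewhere, then compute the resulting correlation directly. A nonzero polynomial vanishes only on a measure-zero set, and there are finitely many d-connections, so a generic choice of parameters gives a distribution in $\mathcal M_{\textrm{Gauss}}(\mathcal H)$ that violates all of them at once. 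Such a distribution is faithful to $\mathcal H$, which forces $A\perp_{\mathcal H}B\mid C$. Exchanging the roles of $\mathcal G$ and $\mathcal H$ finishes the proof.

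Verifying the nonvanishing of that minor in Step~3 is the delicate computation, and it is where most of the work lies.
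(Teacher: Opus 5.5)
Your Steps~1 and~2 are correct. Step~1 is exactly what the paper's only hint (``using the modified Cholesky decomposition'') points to: it identifies $\mathcal M_{\textrm{Gauss}}(\mathcal G)$ with $\mathcal M(\mathcal G)\cap\mathcal N_{0,m}$, after which the ``if'' direction follows from the easy half of Lemma~\ref{lem: markov equals model}.

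The gap is Step~3, which carries the entire ``only if'' direction. You need, for every d-connection in $\mathcal H$, a Gaussian point of $\mathcal M_{\textrm{Gauss}}(\mathcal H)$ violating the corresponding independence. You only assert that putting $\lambda=1$ on $\rho$ and on directed paths from the colliders into $C$ makes the minor nonzero, and that is not a routine check. Nothing in your construction controls how those directed paths meet $\rho$ or each other, so the support of your parameter point may contain several routes between $a$ and $b$. Equal positive weights do not prevent such routes from cancelling. For $a\to b$, $a\to c$, $b\to c$ with all $\lambda$ and $\omega$ equal to $1$, one gets $\sigma_{ab}=1$, $\sigma_{ac}=2$, $\sigma_{bc}=3$, $\sigma_{cc}=6$, so $|\Sigma_{\{a,c\},\{b,c\}}|=6-6=0$, although $a$ and $b$ are d-connected given $c$. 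To make your route rigorous you would have to choose $\rho$ and the directed paths minimally (so that, e.g., the support is a polytree) and then actually prove nonvanishing there. That is the content of the generic faithfulness theorem for linear Gaussian SEMs, not a computation you can wave through.

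You can bypass this altogether. By Step~1 and Theorem~\ref{thm: VP}, it suffices to show that model equivalence forces equal skeletons and v-structures, and two trivial witnesses do this.

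\emph{Skeletons.} Suppose $i\to j$ is an edge of $\mathcal G$ but $i,j$ are nonadjacent in $\mathcal H$, say with $i\in\nd_{\mathcal H}(j)$. Then every distribution in $\mathcal M_{\textrm{Gauss}}(\mathcal H)$ satisfies $X_i\independent X_j\mid X_{\pa_{\mathcal H}(j)}$. The point of $\mathcal M_{\textrm{Gauss}}(\mathcal G)$ with $\lambda_{ij}=1$, all other $\lambda$ zero and all $\omega_v=1$ makes $(X_i,X_j)$ correlated and independent of every other variable, so it violates this.

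\emph{V-structures.} Suppose the skeletons agree and $(i,k,j)$ is a v-structure of $\mathcal G$ but not of $\mathcal H$. After relabeling $i,j$ one has $k\in\pa_{\mathcal H}(j)$ and $i\in\nd_{\mathcal H}(j)$. So $\mathcal M_{\textrm{Gauss}}(\mathcal H)$ satisfies $X_i\independent X_j\mid X_{\pa_{\mathcal H}(j)}$ with $k$ in the conditioning set. Take the point of $\mathcal M_{\textrm{Gauss}}(\mathcal G)$ with $\lambda_{ik}=\lambda_{jk}=1$, all other $\lambda$ zero and all $\omega_v=1$. The remaining conditioning variables are independent noise, so the conditional covariance is $\textrm{cov}(X_i,X_j\mid X_k)=-1/3\neq 0$.

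Exchanging $\mathcal G$ and $\mathcal H$ and applying Theorem~\ref{thm: VP} finishes the proof with no faithfulness argument.
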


\paragraph{\emph{Graphical Models Program Question~2 (Parameter identifiability and inference):}}

Similar to undirected Gaussian graphical models, the Gaussian DAG models also satisfy global identifiability (i.e. the parameterization map in~\eqref{eqn: trek map} is injective). 
For a given DAG $\mathcal G =([m],E)$, the individual parameter values $(\lambda, \omega)\in\mathbb R^{E}\times\mathbb R^m_{>0}$ specifying $\mathcal N(0,\Sigma)\in\mathcal M_{\textrm{Gauss}}(\mathcal G)$ can be recovered in several ways (see \cite[Theorem 5]{boege}), including
\begin{equation}
\label{eqn: Gaussian DAG identification}
\omega_i = \frac{|\Sigma_{\{i\}\cup\pa(i),\{i\}\cup\pa(i)}|}{|\Sigma_{\pa(i),\pa(i)}|} \qquad \textrm{and} \qquad \lambda_{ki} = \frac{|\Sigma_{\{k\}\cup\pa(i)\setminus\{k\},\{i\}\cup \pa(i)\setminus\{k\}}|}{|\Sigma_{\pa(i),\pa(i)}|},
\end{equation}
for every $i\in[m]$ and $k\to i\in E$. 
Standard parameter estimators, such as the maximum likelihood estimator (MLE), also admit easy-to-use, closed-form expressions.
It is an exercise in basic regression to derive the MLE for these model parameters. 

\paragraph{\emph{Graphical Models Program Question~3 (Model selection):}}

Graphical model selection methods are also well-developed for the linear Gaussian DAG model family. 
Here, the goal is to estimate a DAG $\mathcal G$ such that $\mathcal M_\textrm{Gauss}(\mathcal G)$ is the best fit to the observed data relative to all other models in the family $\mathcal F_\mathbb{DAG}^{\textrm{Gauss}}$. 
When the DAG $\mathcal G$ is interpreted causally, this is a process known as \emph{causal discovery}.  
When no causal assumptions are asserted, it is more generally referred to a \emph{structure learning}. 

One of the most basic structure learning algorithms is the \emph{Greedy Equivalence Search} (GES) \cite{chickering}, and it performs quite well for linear Gaussian DAG models in particular. 
The algorithm, in its most simplistic form, chooses a \emph{score function} that assigns a score to each DAG $\mathcal G\in\mathbb{DAG}$ based on the available data (treated as a random sample).  
A typical score to use is the \emph{Bayesian Information Criterion} (BIC), which is defined by
\[
\textrm{BIC}(\mathcal G; \mathbf{x}) = \ell(\hat\theta \mid \mathbf{x}) - \frac{\ln(n)\dim(\Theta)}{2},
\]
where $\ell(\hat\theta \mid \mathbf{x})$ denotes the log-likelihood function evaluated at the MLE $\hat\theta$ of the model parameters for $\mathcal M_{f,\varepsilon, \Theta}(\mathcal G)\in\mathcal F_{\mathbb{DAG}, \Psi}$, $n$ denotes the size of the sample $\mathbf{x}$ and $\dim(\Theta)$ denotes the number of free parameters. 
It its most basic form, the GES algorithm starts with a DAG with no edges, greedily adds edges to optimize the chosen score, then greedily removes edges to further optimize until no higher score can be achieved. 
The algorithm is known to be consistent under relatively mild assumptions on the choice of score function \cite{chickering}.  
It also highlights one reason we often pass to parametric DAG models, since common score functions often rely on parameter estimators such as the MLE. 

Research into structure learning algorithms for graphical model families is currently a very active field, with numerous new algorithms appearing each year.  
One substantial database of structure learning algorithms is called \texttt{Benchpress} \cite{benchpress}, which is an open-source benchmarking platform for new structure learning algorithms.

\subsubsection{Structural Identifiability}
\label{subsubsec: structural identifiability}

Interestingly, the combinatorial characterization of model equivalence for the linear Gaussian DAG model family $\mathcal F_{\mathbb{DAG}}^{\textrm{Gauss}}$ in Proposition~\ref{prop: Gauss model equivalence} is the \emph{same} as for the CI DAG model family (see Corollary~\ref{cor: VP}). 
It is important to notice that this is a phenomenon specific to the linear Gaussian DAG model family, and it does not extend to parametric DAG model families $\mathcal F_{\mathbb{DAG}, \Psi}$ for other choices of $f$, $\varepsilon$ or even simply changing the allowed parameters $\Theta$. 

In fact, one of the advantages of passing to parametric DAG model families is that the model equivalence classes $\mathcal F_{\mathbb{DAG}, \Psi}$ often \emph{refine} Markov equivalence classes. 
This means that more edges can be fixed in direction based solely on the probabilistic assumptions placed on the distribution, which can be of value in causal modeling. 
In particular, for a well-chosen parameterization $\Psi$ the parametric graphical model family $\mathcal F_{\mathbb G, \Psi}$ will be \emph{structurally identifiable}.

\begin{definition}
    \label{def: structural identifiability}
    Let $\mathcal F_{\mathbb G}$ be a graphical model family for a set of graphs $\mathbb G$. We say that $\mathcal F_{\mathbb G}$ is \emph{structurally identifiable} if almost all distributions in $\mathcal M_{f,\varepsilon, \Theta}(\mathcal G)$ do not belong to $\mathcal M_{f,\varepsilon,\Theta}(\mathcal H)$ for any $\mathcal H\in \mathbb G$ where $\mathcal H \neq \mathcal G$. 
\end{definition}

Here, the phrasing \emph{almost all distributions} is intentionally left vague, since a rigorous meaning of the phrase depends on the choice of distributions constituting the models in $\mathcal F_{\mathbb G}$ (i.e., it depends on the parameterization if the family is parametric). 
For a parametric family, $\mathcal F_{\mathbb G, \Psi}$, the distributions in each $\mathcal M_{f,\Theta}(\mathcal G)\in \mathcal F_{\mathbb G, \Psi}$ can often be represented in some measurable space that allows us to assign $\mathcal M_{f,\Theta}(\mathcal G)$ a dimension (see Section~\ref{subsec: rational realizations} below). 
In this case, \emph{almost all distributions} is rigorously phrased as \emph{all distributions in $\mathcal M_{f,\Theta}(\mathcal G)$ aside from some lower-dimensional subset.}
Hence, a `typical' distribution in $\mathcal M_{f,\Theta}(\mathcal G)$ will belong to no other model in the family $\mathcal F_{\mathbb G, \Psi}$, and $\mathcal G$ is the unique graph in $\mathbb G$ whose edges represent the dependence structure underlying the distribution.

\begin{example}[Undirected Gaussian graphical models]
    \label{ex: UGGMs structurally identifiable}
    It is a nice exercise to show that the family of undirected Gaussian graphical models $\mathcal F_{\mathbb{UG}}^{\textrm{Gauss}}$ discussed in Section~\ref{subsubsec: UPGMs} is structurally identifiable. 
\end{example}

For parametric DAG models, the following choices for $f$, $\varepsilon$ and $\Theta$ have recently been observed to yield structurally identifiable families.

\begin{trailer}{Structurally identifiable parametric DAG model families}
    The parametric graphical model family $\mathcal F_{\mathbb{DAG}, \Psi} =\{\mathcal M_{f,\varepsilon, \Theta}(\mathcal G) : \mathcal G \in \mathbb{DAG}\}$ is structurally identifiable if
    \begin{enumerate}
        \item $f_i$ are linear and $\varepsilon_i$ are non-Gaussian. \hfill (LiNGAM models \cite{shimizu2006})
        \item $f_i$ are linear and $\varepsilon_i\sim \mathcal N(0,\omega)$ are Gaussian with equal error variances. \hfill (\cite{PB14})
    \end{enumerate}
\end{trailer}

The second example here is particularly interesting, since it demonstrates an intriguing phenomenon. 
As discussed in Section~\ref{subsubsec: colored DAGs}, the linear Gaussian DAG model with equal error variances for a DAG $\mathcal G$ is a submodel of the linear Gaussian DAG model $\mathcal M_{\textrm{Gauss}}(\mathcal G)$.  
Proposition~\ref{prop: Gauss model equivalence} tells us that the linear Gaussian DAG model family is \emph{not} structurally identifiable.
However, we see that restricting each model to a submodel by constraining the parameter space yields a structurally identifiable family. 
In particular, given a non-structurally identifiable family $\mathcal F_{\mathbb G, \Psi}$, it can be enough to replace the parameter spaces of each graph $\Theta$ with some alternative $\Theta'$ to produce a structurally identifiable family $\mathcal F_{\mathbb G, \Psi'}$.

Structural identifiability results such as these are desirable for several reasons.  
First, they give the best possible answer to Question~1 of the graphical models program (model distinguishability).  
In addition to this, they can provide cost-saving alternatives for causal modeling.  
The gold standard for learning the causal relations in a system is to, of course, perform Randomized Controlled Trials (RCTs).  
While many companies now have means by which to do this ethically, conducting and coordinating multiple large-scale RCTs still requires a lot of resources (both human and monetary). 
Structurally identifiable models provide a cheap alternative when they can safely be used.  

A proof of structural identifiability (or more generally a characterization of model equivalence) for a given parametric family often produces useful results pertaining to parameter identifiability/estimation and model selection as a byproduct. 
Hence, there are several good reasons to investigate the following general question. 

\begin{questype}{Question 2}
    Given a parametric graphical model family $\mathcal F_{\mathbb{G}, \Psi} =\{\mathcal M_{f, \Theta}(\mathcal G) : \mathcal G \in \mathbb{G}\}$, how can we prove (or disprove) that $\mathcal F_{\mathbb{G}, \Psi}$ is structurally identifiable?
\end{questype}

The goal of the next section is to present the basics of a theory from algebraic geometry that provide one answer to this question.

%---Algebra
\section{Algebraic Implicitization}
\label{sec: algebra}

Parametric graphical model families come in many shapes and sizes. 
As we saw in Section~\ref{subsec: parameteric GMs}, the model equivalence classes for a family $\mathcal F_{\mathbb G, \Psi}$ depend on the choices of $\mathbb G$ and $\Psi$.  
Understanding these equivalence classes is fundamental to working with the family $\mathcal F_{\mathbb G, \Psi}$ since we want to avoid risky misinterpretations when modeling real data. 

From the perspective of causality, where one may want to interpret an edge $i\to j$ as representing a directed effect, the best case scenario is that the family is structurally identifiable (Definition~\ref{def: structural identifiability}). 
As discussed in Section~\ref{subsubsec: structural identifiability}, the families of parametric DAG models that are known to be structurally identifiable are few. 
In particular, we are in need of some general techniques for solving the \emph{model distinguishability problem} (see Section~\ref{sec: intro}). 

Our goal in the remainder of these notes is to describe some emerging techniques for solving the model distinguishability problem for parametric graphical model families.  
Interestingly, these techniques can make use of solutions to the \emph{parameter identifiability problem} (Section~\ref{subsubsec: candidate polynomials}). 
They also give us methodology for addressing the third question in the graphical models program; i.e., \emph{model selection} (Remark~\ref{rem: other uses}). 

The techniques we present here apply to parametric graphical model families that admit a \emph{realization as the image of a rational map}.  
While perhaps somewhat unorthodox, it turns out many parametric graphical models satisfy this condition.  
In particular, we will end our discussions by showing how these techniques, known collectively as \emph{algebraic implicitization}, yield proofs of structural identifiability for families for which no (non-algebraic) proof is known. 

Algebraic implicitization is a fundamental practice within algebraic geometry. 
Much of the purely algebro-geometric content of the following sections is described at a level accessible to a bachelor's student in mathematics in the classic book \emph{Ideals, Varieties and Algorithms} by Cox, Little and O'Shea \cite{cox}.

\subsection{Rational realizations of parametric statistical models}
\label{subsec: rational realizations}
Let's start by formalizing what exactly we mean by a \emph{rational realization}. 
% We will then highlight a few examples. 
To define a rational realization of a statistical model, we need to first define a rational map. 

\begin{definition}
    \label{def: rational map}
    Let $k$ be an infinite field, such as $\mathbb Q, \mathbb R, \mathbb C$.
    A function $\varphi : k^d\setminus W \to k^N$ is a \emph{rational map} if 
    \[
    \varphi(\theta_1,\ldots, \theta_d) = \left(\frac{f_1(\theta_1,\ldots, \theta_d)}{g_1(\theta_1,\ldots, \theta_d)}, \ldots, \frac{f_N(\theta_1,\ldots, \theta_d)}{f_N(\theta_1,\ldots, \theta_d)}\right)
    \]
    for polynomials $f_1(\theta_1,\ldots, \theta_d),\ldots, f_N(\theta_1,\ldots, \theta_d)$,  $g_1(\theta_1,\ldots, \theta_N), \ldots, g_N(\theta_1,\ldots, \theta_d)$ in the variables $\theta_1,\ldots, \theta_d$ with coefficients in $k$, where 
    \[
    W = \{(\theta_1,\ldots, \theta_d)\in \mathbb R^d : w(\theta_1,\ldots, \theta_d) = 0\},
    \]
    for the polynomial $w(\theta_1,\ldots, \theta_d) = \prod_{i=1}^Ng_i(\theta_1,\ldots, \theta_d)$.
\end{definition}

\begin{remark}[Fields and polynomial rings]
    \label{rem: fields}
    In the remainder of these notes, $k$ will always denote an \emph{infinite} field, and $k[z_1,\ldots, z_N]$ denotes the set (formally, \emph{ring}) of all polynomials in the variables $z_1,\ldots, z_N$ with coefficients in $k$. 
    In most cases relevant to statistics, $k = \mathbb R$, but we can also choose $k = \mathbb Q$,  $k = \mathbb C$ or any other infinite field.  
    The most popular choice in classical algebraic geometry is $k = \mathbb C$, since this ensures any polynomial in $k[z_1,\ldots, z_N]$ has a solution in 
    \[
    k^N = \{ (a_1,\ldots, a_N) : a_i \in k \textrm{ for all } i\in [n]\},
    \]
    which we call \emph{$N$-dimensional affine space over $k$}.
    Fortunately, the techniques we outline here will be applicable for the more statistical choice $k = \mathbb R$. 
\end{remark}

Note that Definition~\ref{def: rational map} suggestively denotes points in the domain of $\varphi$ as $(\theta_1,\ldots, \theta_d)$. 
In Section~\ref{subsec: parameteric GMs}, we used $\theta = (\theta_1,\ldots, \theta_d)$ to denote a vector of parameters living in our parameter space $\Theta\subseteq \mathbb R^d$ for the models in $\mathcal F_{\mathbb G, \Psi}$. 
Hence, in our statistical setting, we will use rational maps where the polynomial $w(\theta_1,\ldots, \theta_d)$ does not evaluate to $0$ at any point in our parameter space $\Theta\subseteq \mathbb R^d$. 

It turns out that many parametric statistical models admit a rational realization in the following sense. 

\begin{definition}[Rational realization]
    \label{def: rational realization}
    Let $\mathcal M$ be a statistical model parameterized by $\Theta \subseteq k^d$.
    We say that $\mathcal M$ admits a \emph{rational realization} if there exists a rational map $\varphi: k^d\setminus W \to k^N$ such that
    \begin{itemize}
        \item  $W\cap \Theta = \emptyset$,
        \item for all $\theta\in \Theta$, the image $\varphi(\theta) = (z_1,\ldots, z_N)\in  k^N$ is a vector of alternative parameters defining the same distribution $\mathbb P\in \mathcal M$ as $\theta$, and
        \item each $(z_1,\ldots, z_N)\in\varphi(\Theta)$ specifies a \emph{unique} distribution in $\mathcal M$. 
    \end{itemize}
    In this case, we call $\mathcal M^\ast = \varphi(\Theta)$ a \emph{rational realization} of $\mathcal M$ (according to $\varphi$). 
    If $\mathcal F$ is a family of statistical models, each admitting a rational realization $\mathcal M^\ast$, we let 
    \[
    \mathcal F^\ast = \{\mathcal M^\ast : \mathcal M\in \mathcal F\}.
    \]
\end{definition}

\begin{remark}
    \label{rem: reparameterization}
    If a model $\mathcal M$ admits a rational realization, the map $\varphi$ is simply specifying a reparameterization of the model $\mathcal M$ such that each vector of parameters corresponds to a unique distribution in the model. 
    In practice, we choose this map so that the new set of parameters are advantageous in the sense that the parameters $(z_1,\ldots, z_N)\in \varphi(\Theta)$ are also easier to estimate directly via descriptive statistics.
    An example are the linear Gaussian DAG models, for which the map $\varphi$ is given in~\eqref{eqn: trek map}. 
\end{remark}

When a model admits a rational realization, we can employ tools from geometry to say more about the model. 
For example, we noted in Sections~\ref{subsubsec: UPGMs} and~\ref{subsubsec: parametric DAGs} that both the undirected Gaussian graphical model and Gaussian DAG model families satisfy a useful probability called \emph{global identifiability}.  
It is easy to precisely define this property in the language of rational realizations. 
\begin{definition}[Global rational identifiability]
    \label{def: global identifiability}
    Let $\mathcal M$ be a statistical model parameterized by $\Theta\subseteq k^d$ with rational realization $\mathcal M^\ast = \varphi(\Theta)\subseteq k^N$. 
    We say that
    \begin{enumerate}
        \item $\mathcal M$ satisfies \emph{global (parameter) identifiability} if the map $\varphi$ is injective on $\Theta$. 
        \item $\mathcal M$ satisfies \emph{global rational identifiability} if there is a rational function $\psi: k^N\setminus W \to k^d$ such that $\psi(\varphi(\theta)) = \theta$ for all $\theta\in\Theta$. 
    \end{enumerate}
\end{definition}

Let's look at some examples of models with rational realizations. 
\begin{figure}[t]
\begin{center}
\includegraphics[width=0.5\textwidth]{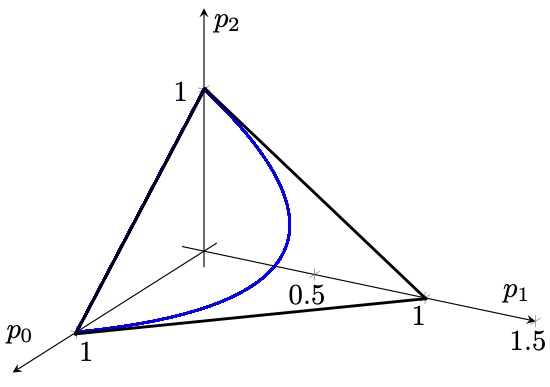}
\end{center}
\caption{The rational realization of the statistical model $\{\textrm{Bin}(2,\theta): \theta\in(0,1)\}$. The boundary of the open probability simplex $\Delta_{\{0,1,2\}}^\circ$ is drawn in black, and the points in the model are drawn in blue.}
\label{fig: binomial model}       
\end{figure}

\begin{example}
    \label{ex: bindist}
    Let $X\sim \textrm{Bin}(2, \theta)$ be a binomial distributed random variable with $\theta \in \Theta = (0,1)$ and sample space $\mathcal X = \{0,1,2\}$.
    The mass function $f_X(x) = \binom{2}{x}\theta^x(1 - \theta)^{2- x}$ for $x\in\mathcal X$ can be encoded in a vector $f_\theta = (f_X(0), f_X(1), f_X(2))\in \mathbb R^3$. 
    The vector $f_\theta$ clearly parameterizes the distribution for $X$ since $\textrm{Pr}(X = i) = f_\theta(i)$. 
    Moreover, $f_\theta$ lives in the open probability simplex 
    \[
    \Delta^\circ_\mathcal X = \{p = (p_0,p_1,p_2)\in \mathbb R^3 : p_0 + p_1 + p_2 =1, p_i>0 \textrm{ for all } i\in \{0,1,2\}\}. 
    \]
    In a classic modeling scenario, we may assume our data-generating distribution to be modeled by a binomial distribution with $n = 2$ and $\theta$ unknown.  
    Hence, we are interested in the distributions for $X$ for which $f_\theta$ belongs to the image $\varphi(\Theta)$ of the rational map
    \[
    \varphi: \mathbb R \to \mathbb R^3; \qquad \varphi(\theta) = ((1 - \theta)^2, 2\theta(1-\theta), \theta^2). 
    \]
    In particular, $\varphi(\Theta)$ is a rational realization of the binomial family $\{\textrm{Bin}(2, \theta) : \theta \in \Theta\}$. 
    Figure~\ref{fig: binomial model} depicts $\varphi(\Theta)$ as the blue curve living inside the probability simplex $\Delta_{\{0,1,2\}}^\circ$. 
\end{example}

Our next examples are families of parametric graphical models. % 

\begin{example}
    \label{ex: gauss UGM geometric}
    Consider the undirected Gaussian graphical model $\mathcal M_{\textrm{Gauss}}(\mathcal G)$ for $\mathcal G = ([m], E)\in\mathbb{UG}$ from Definition~\ref{def: undirected gaussian GM}.  
    Here, our parameter space is the set of concentration matrices $\mathcal L_\mathcal G\subset \textrm{Sym}(\mathbb R^m)$ defined in~\eqref{eqn: UG concentration space}.
    We saw in Remark~\ref{rem: geometric parametrization UGG} that 
    \[
    \mathcal M_{\textrm{Gauss}}(\mathcal G) = \{\mathcal N(0, \Sigma) : \Sigma \in \varphi_\mathcal G(\mathcal L_\mathcal G)\}, 
    \]
    where $\varphi_\mathcal G$ is the rational map given in~\eqref{eqn: UG cov param}.  Note that $\varphi_\mathcal G$ is defined on $\mathcal L_G$ since $|K|\neq 0$ for all $K\in \mathcal L_\mathcal G$, a set of positive definite matrices.
    Since every distribution in $\mathcal M_{\textrm{Gauss}}(\mathcal G)$ is determined by its covariance matrix, the set $\mathcal{M}^\ast_{\textrm{Gauss}}(\mathcal G) = \varphi_\mathcal G(\Theta)\subset \textrm{Sym}(\mathbb R^m)$ is a rational realization of the model $\mathcal M(\mathcal G)$. 
\end{example}

\begin{example}
    \label{ex: gauss DAG geometric}
    Consider the Gaussian DAG model $\mathcal M_{\textrm{Gauss}}(\mathcal G)$ for $\mathcal G = ([m], E)\in \mathbb{DAG}$ from Definition~\ref{def: linear Gaussian DAG models}.  
    In this case, our parameter space is $\mathbb R^{E}\times \mathbb R^m_{>0}$ (by~\eqref{eqn: Gauss DAG params}). 
    According to Remark~\ref{rem: trek map}, we have that 
    \[
    \mathcal M_{\textrm{Gauss}}(\mathcal G) = \{\mathcal N(0,\Sigma) : \Sigma \in \varphi_\mathcal G(\mathbb R^{E}\times \mathbb R^m_{>0})\}, 
    \]
    where $\varphi_\mathcal G$ is the rational map in~\eqref{eqn: trek map} arising from the trek rule in Proposition~\ref{prop: trek rule}. 
    Hence, $\mathcal{M}^\ast_{\textrm{Gauss}}(\mathcal G) = \varphi_\mathcal G(\mathbb R^{E}\times \mathbb R^m_{>0})$ is a rational realization of $\mathcal M_{\textrm{Gauss}}(\mathcal G)$.
\end{example}

\begin{remark}
    \label{rem: it works}
Note that in each of these three examples, the rational map $\varphi$ gives us the desired properties of our rational realization stated in Remark~\ref{rem: reparameterization}.  In Example~\ref{ex: bindist}, the reparameterization in terms of the mass function vector $f_\theta = (p_0,p_1,p_2)$ is amenable to direct estimation via descriptive statistics by simply counting how many observations in the sample are equal to $0,1$ and $2$, respectively. In Examples~\ref{ex: gauss UGM geometric} and~\ref{ex: gauss DAG geometric}, the map $\varphi$ reparameterizes the distribution in terms of the covariance matrix, which is easily estimated via the sample covariance matrix.  Moreover, in all three examples, the reparameterization $\varphi(\theta)$ for $\mathbb P$ in the model specifies $\mathbb P$ uniquely. 
\end{remark}

\subsection{Polynomial constraints for parametric models}
\label{subsec: polynomial constraints}
For a family of parametric graphical models with rational realizations $\mathcal F^\ast_{\mathbb G, \Psi}$, solving the model distinguishability problem amounts to characterizing the graphs $\mathcal G\neq \mathcal H$ for which $\mathcal{M}^\ast_{f,\Theta}(\mathcal G)$ and $\mathcal{M}^\ast_{f, \Theta}(\mathcal H)$ are non-equal subsets of $k^N$. 
This can be a difficult task using only the parameterization of the models.  
We need is a way to describe these models that makes it easier to see when they are different. 
The idea of \emph{algebraic implicitization} is to find a description of each rational realization $\mathcal{M}^\ast_{f,\Theta}(\mathcal G)\in \mathcal F_{\mathbb G, \Psi}^\ast$ as the solution set to a system of equations.
Manipulating these equations, instead of a parameterization, can make the model distinguishability task more manageable. 

To describe this method, we use the language of algebraic geometry, where the fundamental objects of study are called \emph{(affine) varieties}. 
An affine variety is simply the solution set to a system of polynomial equations, generalizing the familiar linear spaces studied in linear algebra.

\begin{definition}
    \label{def: affine variety}
    The \emph{(affine) variety} generated by $f_1,\ldots, f_s \in k[z_1,\ldots, z_N]$ is the set
    \[
    V(f_1,\ldots, f_s) = \{ a = (a_1,\ldots, a_N) \in k^N : f_i(a) = 0 \textrm{ for all } i\in[s]\}. 
    \]
\end{definition}

A simple (non-statistical) example can help us to see how affine varieties may be of use for our statistical problem. 
The following example comes from \cite{cox}.

\begin{example}
\label{ex: circle}
    Consider the rational map $\varphi: \mathbb R \to \mathbb R^2$ where
    \[
    \varphi(\theta) = \left( \frac{1 - \theta^2}{1 + \theta^2}, \frac{2\theta}{1 + \theta^2}\right) = (x,y)\in \mathbb R^2. 
    \]
    It may not be immediately clear, but as we let $\theta$ vary over $\mathbb R$ this parameterization traces out the unit cirle in $\mathbb R^2$ minus the point $(0,-1)$. 
    \begin{center}
        \includegraphics[width=0.5\textwidth]{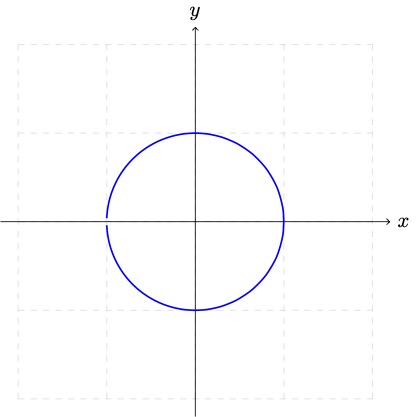}
    \end{center}
    Hence, almost every point in the image $\varphi(\mathbb R)$ is a solution to the polynomial equation $x^2 + y^2 = 1$. 
    In other words, the affine variety $V(x^2 + y^2 - 1)$ minus the point $(0,-1)$ is an alternative (implicit) representation of $\varphi(\mathbb R)$. 
    Since, a typical point in $\varphi(\mathbb R)$ belongs to the affine variety $V(x^2 + y^2 - 1)$, it is easier to describe the set $\varphi(\mathbb R)$ as the solution set to the equation $x^2 + y^2 = 1$ (with the understanding that we don't care about the atypical solution $(0,-1)$). 
\end{example}

Our goal is to apply the same logic as in Example~\ref{ex: circle} to $\mathcal{M}^\ast_{f,\Theta}(\mathcal G) \in \mathcal F_{\mathbb G, \Psi}^\ast$. 
While it may be hard to distinguish the sets $\mathcal{M}^\ast_{f,\Theta}(\mathcal G)$ and $\mathcal{M}^\ast_{f, \Theta}(\mathcal H)$ using only their parameterizations, we may have more luck using implicit representations $V(f_1,\ldots, f_s)$ and $V(g_1,\ldots, g_t)$ since they will tell us constraints $f_1(z) = 0,\ldots, f_s(z) = 0$ and $g_1(z) = 0,\ldots, g_t(z) = 0$ that, respectively, define the models. 

\begin{remark}
    \label{rem: polynomial constraints}
    In Section~\ref{subsec: CI models}, we used conditional independence constraints to describe CI graphical models, Here, the plan is to identify and use \emph{polynomial} constraints instead. 
\end{remark}

In Example~\ref{ex: circle}, the polynomial constraint $x^2 + y^2 -1$ defining the image $\varphi(\mathbb R)$ has the property that its affine variety is the \emph{smallest} affine variety containing the image of the parameter space $\mathbb R$ under the rational map $\varphi$.  
Only a few atypical points were added to obtain the full variety $V(x^2 + y^2 - 1)\supset \varphi(\mathbb R)$.  
To generalize this to our statistical models, we consider the \emph{implicitization problem} in algebraic geometry: 

\begin{trailer}{The Implicitization Problem}
    Let $k$ be an infinite field, such as $\mathbb Q, \mathbb R$ or $\mathbb C$.  For a rational map 
    \[
    \varphi: k^d \setminus W\to k^N; \qquad \varphi(\theta) = \left(\frac{f_1(\theta)}{g_1(\theta)}, \ldots, \frac{f_N(\theta)}{g_N(\theta)}\right) = (z_1,\ldots, z_N),
    \]
    let $\overline{\varphi(k^d\setminus W)}$ denote the smallest affine variety containing $\varphi(k^d\setminus W)$. 
    Find polynomials $h_1,\ldots, h_s \in k[z_1,\ldots, z_N]$ such that $V(h_1,\ldots, h_s) = \overline{\varphi(k^d \setminus W)}$.
\end{trailer}

\begin{remark}[Assuming the model is Zariski dense]
    \label{rem: model dimension and Zariski closure}
    In actuality, we are interested in finding $h_1,\ldots, h_s \in k[z_1,\ldots, z_N]$ such that $V(h_1,\ldots, h_s)$ is the smallest affine variety containing $\mathcal{M}^\ast = \varphi (\Theta)$, which can be a proper subset of $\varphi(k^d\setminus W)$. 
    However, if $\mathcal M^\ast$ and $\overline{\varphi(k^d\setminus \Theta)}$ have the same dimension, then $\overline{\varphi(k^d\setminus W)}$ \emph{is} the smallest variety containing $\mathcal M^\ast$.  
    Hence, it is enough to answer the implicitization problem for $\varphi(k^d\setminus W)$ to obtain polynomial constraints $h_1(z) = 0, \ldots, h_s(z) = 0$ that define our statistical model $\mathcal M$. 
    Many parametric graphical models satisfy this dimension condition, which, in the language of algebraic geometry, implies that $\mathcal M^\ast$ is \emph{Zariski dense} in $\overline{\varphi(k^d\setminus W)}$.
\end{remark}

\begin{definition}
    \label{def: dense family} 
    A statistical model $\mathcal M$ is \emph{dense} if $\mathcal M^\ast = \varphi(\Theta)$ has the same dimension as $\overline{\varphi(k^d\setminus W)}$.  A family $\mathcal F$ is \emph{dense} if every $\mathcal M\in\mathcal F$ is dense. 
\end{definition}

For a dense statistical model $\mathcal M$, we are interested in finding polynomials $h_1,\ldots, h_s$ that solve the implicitization problem. 
A good place to start is to consider \emph{all} the polynomials that evaluate to $0$ on $\mathcal{M}^\ast$.  

\begin{definition}
    \label{def: vanishing ideal}
    Let $\mathcal{M}^\ast\subseteq k^N$. 
    The \emph{vanishing ideal} of $\mathcal{M}^\ast$ is the set of polynomials
    \[
    I(\mathcal M^\ast) = \{ f\in k[z_1,\ldots, z_N] : f(a) = 0 \textrm{ for all } a\in \mathcal M^\ast\}. 
    \]
\end{definition}

The vanishing ideal is indeed an \emph{ideal} in the polynomial ring $k[z_1,\ldots, z_N]$; meaning it is a collection of polynomials $I\subseteq k[z_1,\ldots, z_N]$ satisfying:
\begin{enumerate}
    \item $0 \in I$, 
    \item if $g\in I$ and $h \in k[z_1,\ldots, z_N]$ then $hg\in I$, and 
    \item if $g,h\in I$ then $g + h\in I$. 
\end{enumerate}
For readers unfamiliar with algebra, it is a nice exercise to verify that the vanishing ideal $I(\mathcal M^\ast)$ is an ideal according to this definition. 

The vanishing ideal of the set $\mathcal M^\ast$ is useful since it helps us translate the problem of finding the smallest affine variety containing $\mathcal M^\ast$ (a geometric problem), into a language where we work only with polynomials -- which are easier to manipulate algorithmically. 
In particular, we have the following lemma. 
\begin{lemma}
    \label{zariski closure}
    Let $\mathcal M^\ast\subseteq k^N$ for some infinite field $k$. The subset
    \[
    \overline{\mathcal M^\ast} = V(I(\mathcal M^\ast)) = \{a\in k^N : f(a) = 0 \textrm{ for all } f\in \mathcal M^\ast\}
    \]
    is the smallest affine variety containing $\mathcal M^\ast$.  
\end{lemma}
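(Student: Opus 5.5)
We have to show two things: that $V(I(\mathcal M^\ast))$ is an affine variety containing $\mathcal M^\ast$, and that it is contained in every affine variety that contains $\mathcal M^\ast$. (The displayed set should read ``$f(a)=0$ for all $f\in I(\mathcal M^\ast)$''; this is the intended meaning and the one we use.)

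\textbf{Step 1: $V(I(\mathcal M^\ast))$ is an affine variety.} Definition~\ref{def: affine variety} only allows finitely many defining polynomials, while $I(\mathcal M^\ast)$ is typically infinite. We invoke the Hilbert Basis Theorem: every ideal of $k[z_1,\ldots,z_N]$ is finitely generated. So there are $h_1,\ldots,h_s$ with $I(\mathcal M^\ast)=\{\sum_i q_ih_i : q_i\in k[z_1,\ldots,z_N]\}$.

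We then check that $V(I(\mathcal M^\ast))=V(h_1,\ldots,h_s)$. One inclusion holds because each $h_i$ lies in the ideal. For the other, if all $h_i(a)=0$, then every $f=\sum_i q_ih_i$ also vanishes at $a$.

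This step is the main obstacle, since it is the only point that needs a genuine theorem beyond the definitions. In these notes we would simply cite it from \cite{cox}.

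\textbf{Step 2: containment.} If $a\in\mathcal M^\ast$ and $f\in I(\mathcal M^\ast)$, then $f(a)=0$ by Definition~\ref{def: vanishing ideal}. Hence $\mathcal M^\ast\subseteq V(I(\mathcal M^\ast))$.

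\textbf{Step 3: minimality.} Let $V(g_1,\ldots,g_t)$ be any affine variety containing $\mathcal M^\ast$. Each $g_j$ vanishes on $\mathcal M^\ast$, so each $g_j$ lies in $I(\mathcal M^\ast)$. Now take $a\in V(I(\mathcal M^\ast))$. Every element of $I(\mathcal M^\ast)$ vanishes at $a$, in particular every $g_j$, so $a\in V(g_1,\ldots,g_t)$. Therefore $V(I(\mathcal M^\ast))\subseteq V(g_1,\ldots,g_t)$, which is the minimality we needed.

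The argument uses no property of $k$ beyond its being a field. Infiniteness is not needed for this lemma; it matters only for later uses of the lemma.
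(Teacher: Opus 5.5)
Your proof is correct and follows the route the paper itself indicates. The paper states this lemma without a formal proof and, right afterwards, appeals to the Hilbert Basis Theorem exactly to guarantee that $V(I(\mathcal M^\ast))$ is cut out by finitely many polynomials, and your containment and minimality steps supply the standard remaining details. You are also right that the displayed condition should read $f\in I(\mathcal M^\ast)$ and that the infiniteness of $k$ plays no role here.
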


When $\mathcal M$ is dense, we have that $\overline{\mathcal M^\ast} = \overline{\varphi(k^d\setminus \Theta)}$, so we are looking for polynomials $h_1,\ldots, h_s\in k[z_1,\ldots, z_N]$ such that  $V(h_1,\ldots, h_s) = V(I(\mathcal M^\ast))$.
A keen reader may notice that, by our definition of affine variety, such polynomials may not even exist!  
This is because the vanishing ideal $I(\mathcal M^\ast)$ is an \emph{infinite} set of polynomials in $k[z_1,\ldots, z_N]$, and we have defined affine varieties to be sets specified by finitely many polynomials $h_1,\ldots, h_s$. 
Fortunately, the Hilbert Basis Theorem resolves this issue. 

\begin{theorem}[Hilbert Basis Theorem]
    For every ideal $I \subseteq k[z_1,\ldots, z_N]$ there exist polynomials $h_1,\ldots, h_s\in k[z_1,\ldots, z_N]$ such that 
    \[
    I = \left\{\sum_{i=1}^sg_ih_i : g_1,\ldots, g_s\in k[z_1,\ldots, z_N]\right\}. 
    \]
    The (finite) set of polynomials $\{h_1,\ldots, h_s\}$ is called a \emph{basis} for $I$, and we write 
    \[
    I = \langle h_1,\ldots, h_s\rangle.
    \]
\end{theorem}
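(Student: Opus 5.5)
The plan is the standard proof via monomial ideals and Dickson's Lemma, followed by a division argument. First, fix a monomial order on $k[z_1,\ldots,z_N]$, say lexicographic order with $z_1 > \cdots > z_N$. Recall that this is a total order on monomials that is compatible with multiplication and is a well-ordering. Every nonzero $f$ then has a leading term $\mathrm{LT}(f)$. For the given ideal $I$ we form the monomial ideal $\langle \mathrm{LT}(I)\rangle$, which is generated by the leading terms of all nonzero elements of $I$. If $I=\{0\}$ there is nothing to prove, since we may take $h_1 = 0$; so we assume $I \neq \{0\}$.

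The first key step is Dickson's Lemma: every monomial ideal $\langle z^\alpha : \alpha \in A\rangle$, with $A \subseteq \mathbb Z_{\ge 0}^N$, is generated by finitely many of the $z^\alpha$ with $\alpha\in A$. I would prove this by induction on $N$. The case $N=1$ is trivial: take the smallest exponent. For the inductive step, write monomials as $z^{\alpha'} z_N^{j}$. Consider the ideal in $k[z_1,\ldots,z_{N-1}]$ generated by the $z^{\alpha'}$ such that $z^{\alpha'}z_N^j \in A$ for some $j$. By induction it has finitely many generators, and we let $m$ be the largest corresponding power of $z_N$. Next, for each $0\le j< m$, apply induction to the monomials with $z_N$-degree exactly $j$. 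The union of all these generator lists is a finite generating set. This is the main obstacle, in the sense that it carries the real combinatorial content. A subtlety to handle here is ensuring the finite generators can be chosen from $A$ itself; this follows because a monomial lies in a monomial ideal iff it is divisible by one of the generators.

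Applying Dickson's Lemma, there exist $h_1,\ldots,h_s\in I$ with $\langle \mathrm{LT}(I)\rangle = \langle \mathrm{LT}(h_1),\ldots,\mathrm{LT}(h_s)\rangle$. Clearly $\langle h_1,\ldots,h_s\rangle\subseteq I$, since $I$ is an ideal (closure properties (2) and (3)). For the reverse inclusion, take $f\in I$ and run the multivariate division algorithm of $f$ by $(h_1,\ldots,h_s)$. This gives $f=\sum g_i h_i + r$, where no term of $r$ is divisible by any $\mathrm{LT}(h_i)$. The division algorithm terminates because the monomial order is a well-ordering. Now $r = f - \sum g_ih_i \in I$. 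If $r\neq 0$, then $\mathrm{LT}(r)\in\langle \mathrm{LT}(I)\rangle = \langle \mathrm{LT}(h_1),\ldots,\mathrm{LT}(h_s)\rangle$, so $\mathrm{LT}(r)$ is divisible by some $\mathrm{LT}(h_i)$, which is a contradiction. Hence $r=0$ and $f\in\langle h_1,\ldots,h_s\rangle$, which gives $I = \langle h_1,\ldots,h_s\rangle$ as claimed.

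Finally, note that nothing here uses that $k$ is infinite; the argument works over any field, as in \cite{cox}. The infiniteness assumption in Remark~\ref{rem: fields} is only needed elsewhere, for the correspondence between ideals and varieties.
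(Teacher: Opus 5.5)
Your proof is correct. The paper states the Hilbert Basis Theorem without proof and defers to \cite{cox}, and your argument is exactly the one given there: Dickson's Lemma gives finitely many generators for $\langle \mathrm{LT}(I)\rangle$, and the division algorithm then forces the remainder to vanish; you are also right that infiniteness of $k$ plays no role. The only spot to tighten is the inductive step of Dickson's Lemma, where the generators of the ideals in $k[z_1,\ldots,z_{N-1}]$ must be lifted back to $k[z_1,\ldots,z_N]$, by multiplying by $z_N^{m}$ and by $z_N^{j}$ respectively, before you take the union.
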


In other words, our main goal is to find a basis for the vanishing ideal $I(\mathcal M^\ast)$; i.e., polynomials $h_1,\ldots, h_s$ such that $\langle h_1,\ldots, h_s\rangle = I(\mathcal M^\ast)$. 
This makes our problem completely algebraic, and hence easier to attack algorithmically.

\begin{trailer}{Our Goal: Find polynomial constraints defining a statistical model}
    \label{goal}
    Let $\mathcal M$ be a dense statistical model. We want to find a basis $\{h_1,\ldots, h_s\}\subset k[z_1,\ldots, z_N]$ for the vanishing ideal $I(\mathcal M^\ast)$ since these polynomials give us constraints $h_1(z) = 0, \ldots, h_s(z) = 0$ that define the model $\mathcal M$. 
\end{trailer}
Let's have a look at a couple examples to see why this makes sense. 

\begin{example}
    \label{ex: binomial model revisited}
    Consider the family of binomial distributions $\{\textrm{Bin}(2,\theta) : \theta\in (0,1)\}$ from Example~\ref{ex: bindist}, with rational realization $\mathcal M^\ast$ depicted in Figure~\ref{fig: binomial model}.  
    This model has vanishing ideal generated by two polynomials in the polynomial ring $\mathbb R[p_0,p_1,p_2]$:
    \[
    I(\mathcal M^\ast) = \langle p_0 +p_1 + p_2 -1, p_1^2 - 4p_0p_2\rangle. 
    \]
    As noted in Example~\ref{ex: bindist}, the variable $p_i$ records the probability $\textrm{Pr}(X = i)$, and $\mathcal M^\ast$ is a subset of the open probability simplex $\Delta_\mathcal X^\circ\subset \mathbb R^3$. 
    Hence, the first generator $p_0 + p_1 + p_2 -1$ is simply a constraint on $\mathcal M^\ast$ that records the fact that the sum of the entries of the mass function is equal to $1$.  
    The second constraint is specific to the model.
    It selects exactly which points in the probability simplex correspond to distributions in the model.

    The only other constraints beyond $p_0 +p_1 + p_2 -1 = 0$ and $p_1^2 - 4p_0p_2 = 0$ needed to define $\mathcal M^\ast$ are the inequalities $p_0,p_1,p_2>0$, which are simply stating that the probabilities $\textrm{Pr}(X = i)$ are all positive. 
    Hence, a complete set of constraints defining the model $\mathcal M^\ast$ is
    \[
    p_0 +p_1 + p_2  = 1,\quad  p_1^2 - 4p_0p_2 = 0,\quad  p_1>0,\quad  p_2>0,\quad  p_3>0.
    \]
\end{example}

\begin{example}
    \label{ex: UG path example}
    Let us now consider the vanishing ideal for an undirected Gaussian graphical model $\mathcal M_{\textrm{Gauss}}(\mathcal G)$ where $\mathcal G$ is the following graph on $3$ nodes: 
    \begin{center}
    \begin{tikzpicture}[thick, scale=0.6]
    \node[circle, draw, inner sep=1pt, minimum width=1pt] (1u) at (0,0)  {$1$};
    \node[circle, draw, inner sep=1pt, minimum width=1pt] (2u) at (2,0.24) {$2$};
    \node[circle, draw, inner sep=1pt, minimum width=1pt] (3u) at (4,-0.2) {$3$};
    
    \draw[-, very thick] (1u) -- (2u);
    \draw[-, very thick] (2u) -- (3u);
    
    \end{tikzpicture}
    \end{center}
    In this case, the rational realization of our model consists of $3\times 3$ covariance matrices $\Sigma = (\sigma_{ij})_{i,j = 1}^3$ (see Example~\ref{ex: gauss UGM geometric}). 
    Hence, our polynomial constraints should live in the ring $\mathbb R[\sigma_{11}, \sigma_{22}, \sigma_{33}, \sigma_{12}, \sigma_{13}, \sigma_{23}]$. 
    The vanishing ideal for this model has only a single generator
    \begin{equation}
    \label{eqn: UG vanishing ideal}
    I(\mathcal M_{\textrm{Gauss}}^\ast(\mathcal G)) = \langle\sigma_{12}\sigma_{23} - \sigma_{22}\sigma_{13}\rangle.
    \end{equation}
    This generator has a familiar statistical interpretation.  
    Namely, it is the determinant of the submatrix $|\Sigma_{\{1,2\}, \{2,3\}}|$. 
    With the help of the well-known fact: 
    \begin{equation}\label{eqn: Gauss CI}
    \textrm{If} \quad X\sim\mathcal{N}(0,\Sigma) \quad \textrm{then} \quad X_A\independent X_B \mid X_C \Longleftrightarrow |\Sigma_{A\cup C, B\cup C}| = 0,
    \end{equation}
    we see that the single constraint in our basis is capturing the global Markov property for the undirected graph $\mathcal G$; i.e., the CI relation $X_1 \independent X_3 \mid X_2$. 
    The only other constraints on our model are the assumptions that $\Sigma$ is a positive definite matrix, which are algebraically encoded via the polynomial inequalities asserting that the principle minors of $\Sigma$ are all positive.  
    Hence, a complete set of constraints defining the model $\mathcal M_{\textrm{Gauss}}(\mathcal G)$ is
    \[
    \sigma_{12}\sigma_{23} - \sigma_{22}\sigma_{13} = 0,\quad \sigma_{11} > 0,\quad \sigma_{11}\sigma_{22} - \sigma_{12}^2> 0,\quad |\Sigma|>0. 
    \]
\end{example}

\begin{remark}
    \label{rem: vanishing ideals for distinguishing models}
    A set of polynomial equalities and inequalities that define our model is called an \emph{implicit description}. 
    One motivation for computing implicit descriptions is to use the constraints as a tool for solving the model distinguishability problem for parametric graphical models. 
    In  Example~\ref{ex: UG path example}, we see that the rational realization of \emph{any} Gaussian graphical model will include the same inequalities (since they are simply asserting that the covariance matrix is positive definite).  
    Hence, it is enough to look at the constraints in the basis of the vanishing ideals (so long as the specificities of the graphical model family are not imposing any additional polynomial \emph{inequalities}). 
\end{remark}

\begin{remark}[Polynomial constraints as tools for model selection]
    \label{rem: other uses}
    Example~\ref{ex: UG path example} reveals one way the polynomials obtained via algebraic implicitization can be used in the model selection problem of the graphical models program.  
    To test if the DAG model in Example~\ref{ex: UG path example} is a good fit to the data, we can perform a statistical hypothesis test for the null hypothesis $H_0: X_1 \independent X_3 \mid X_2$. 
    We see from this example that this amounts to deciding if we can reject the hypothesis that the sample covariance matrix evaluates to $0$ on the polynomial $\sigma_{12}\sigma_{23} - \sigma_{22}\sigma_{13}$. 
    More generally, the polynomials in the vanishing ideal of a rational realization of a statistical model translate into hypothesis tests for model selection using U-statistics (see \cite{sturma}). 
\end{remark}

Given these examples, the natural question to ask is, \emph{``How do we actually compute a basis for a vanishing ideal of a statistical model?''}

\subsection{Gröbner bases for computing model-defining constraints}
\label{subsubsec: gröbner bases}

In this section, we present a computational method for finding a basis of the vanishing ideal of a statistical model.  
While the computational methods only return a basis for a single model $\mathcal M_{f, \Theta}(\mathcal G)$, it is a valuable tool for gaining an understanding of what constraints may hold more generally for the models in the family $\mathcal F_{\mathbb G, \Psi}$. 
It is also a tool for finding constraints to be used in statistical hypothesis tests for model fit, as described in Remark~\ref{rem: other uses}. 

Suppose $\mathcal M$ is a dense statistical model with parameter space $\Theta$ and rational realization given by the map
\begin{equation}
\label{eqn: elimination map}
\varphi: k^d\setminus W \to k^N; \qquad \varphi(\theta) = \left(\frac{f_1(\theta)}{g_1(\theta)},\ldots, \frac{f_N(\theta)}{g_N(\theta)}\right) = (z_1,\ldots, z_N). 
\end{equation}
We are interested in explicitly computing a basis for the vanishing ideal $I(\mathcal M^\ast)$ or, equivalently, a basis for $I(\varphi(k^d\setminus W))$. 
Fortunately, we have a good starting point, since we have a basis for a very-much related ideal: 
\[
I_\varphi = \langle g_1(\theta)z_1 - f_1(\theta), \ldots, g_N(\theta)z_N - f_N(\theta), g_1(\theta)\cdots g_N(\theta)y - 1\rangle.
\]
The variety $V(I_\varphi)$ is the \emph{graph} of the parameterization $\varphi$, since it is generated by the set of polynomials 
encoding how we obtain the values of each $x_i$ for a given $\theta \in k^d\setminus W$: 
\[
x_i = \frac{f_i(\theta)}{g_i(\theta)}, \qquad i = 1,\ldots, N,
\]
The ideal has one additional generator, $g_1(\theta)\cdots g_N(\theta)y - 1$, which ensures that our denominators $g_1(\theta),\ldots, g_N(\theta)$ aren't tossing in any additional zeros that we don't want.  This works since it corresponds to the rational function
\[
y = \frac{1}{g_1(\theta)\cdots g_N(\theta)}, 
\]
which cannot have any solutions with $y = 0$.

\begin{example}
    \label{ex: binomial graph ideal}
    For the binomial model in Example~\ref{ex: bindist} parameterized by the rational map
    \[
    \varphi: \mathbb R \to \mathbb R^2; \qquad \varphi(\theta) = ((1 - \theta)^2, 2\theta(1 - \theta), \theta^2) = (p_0,p_1,p_2), 
    \]
    we obtain the ideal 
    \[
    I_\varphi = \langle p_0 - (1 - \theta)^2, p_1 - 2\theta(1 - \theta), p_2 - \theta^2 \rangle \subset \mathbb R[p_0,p_1,p_2]. 
    \]
\end{example}

Notice that a point $(\theta_1,\ldots, \theta_d,z_1,\ldots, z_N)\in V(I_\varphi)$ if and only if $(z_1,\ldots, z_N)\in \mathcal M^\ast$ and $(z_1,\ldots, z_N) = \varphi(\theta_1,\ldots, \theta_d)$. 
So, in a sense, this is \emph{almost} our vanishing ideal $I(\mathcal M^\ast)$. 
The main problem is that the ideal $I_\varphi$ lives in a polynomial ring with too many variables:  $k[\theta_1,\ldots, \theta_d, y, z_1,\ldots, z_N]$, while our vanishing ideal lives in $k[z_1,\ldots, z_N]$. 
So we need to \emph{eliminate} the extra variables $\theta_1,\ldots, \theta_d, y$.

It turns out that if we eliminate these extra variables carefully, the result will be exactly the vanishing ideal $I(\mathcal M^\ast)$. 
To do this, we need to change the basis for the ideal $I_\varphi$, to a special type of basis called a \emph{Gröbner basis}.  

A Gröbner basis for an ideal $I\subset k[z_1,\ldots, z_N]$ is always defined with respect to an ordering of the monomials in the polynomial ring $k[z_1,\ldots, z_N]$, called a term order. 

\begin{definition}
    Let $k[z_1,\ldots, z_N]$ be a polynomial ring over a field $k$. A \emph{term order} $\succ$ on $k[z_1,\ldots, z_N]$ is a total ordering of the monomials $z^a = z_1^{a_1}\cdots z_N^{a_N}$ for $a = (a_1,\ldots, a_N) \in \mathbb Z_{\geq 0}^N$ that satisfies
    \begin{enumerate}
        \item $z^a \prec z^b$ implies that $z^a z^c \prec z^b z^c$ for all $c\in \mathbb Z_{\geq 0}^N$, and 
        \item $1 = z^0 \prec z^{a}$ for all $a\in \mathbb Z_{\geq 0}^N$. 
    \end{enumerate}
\end{definition}

There are many term orders to choose from. 
A common example is the \emph{lexicographic term order}, which asserts that $z^{a} \prec z^b$ if and only if the left-most nonzero entry in the vector $b - a$ is positive.  For instance, if we take $z_1 \succ z_2 \succ z_3$ then 
\[
z_1^3 \succ z_1^2z_2^3 \succ z_1^2z_2^2\succ z_2 \succ z_3^3.
\]

Notice that every polynomial $f\in k[z_1,\ldots, z_N]$ has a largest term with respect to a given term order $\succ$. 

\begin{definition}
    \label{def: leading term and initial ideal}
    Let $f = \sum_{a\in \mathbb Z_{\geq 0}^N}c_az^{a}\in k[z_1,\ldots, z_N]$ and $\succ$ be a term order on $k[z_1,\ldots, z_N]$. 
    The \emph{initial term} of $f$ with respect to $\succ$ is the term 
    \[
    \textrm{in}_\succ(f) := c_az^{a},
    \]
    for which $z^{a}\succ z^b$ for all other $z^b$ having $c_b\neq 0$ in $f$. 
\end{definition}

For example, the polynomial $f = z_1z_2 + z_2^2$ has initial term $\textrm{in}_\succ(f) = z_1z_2$ with respect to the lexicographic term order described above. 

A Gröbner basis for an ideal is a special type of basis that is detected using the initial terms of its polynomials. 

\begin{definition}
    \label{def: GB}
    Let $I \subseteq k[z_1,\ldots, z_N]$ be an ideal, and let $\succ$ be a term order on $k[z_1,\ldots, z_N]$. A \emph{Gröbner basis} for $I$ with respect to $\succ$ is a finite set $G = \{g_1,\ldots, g_s\}\subset I$ satisfying
    \[
    \langle \textrm{in}_\succ(f) : f\in I\rangle = \langle \textrm{in}_\succ(g_1), \ldots, \textrm{in}_\succ(g_s)\rangle. 
    \]
\end{definition}
That is,  a basis for $I$ is a Gröbner basis with respect to $\succ$ if the ideal generated by the initial terms of the polynomials in the basis equals the ideal generated by initial terms of \emph{all} polynomials in the ideal. 
While it is not entirely obvious, an important observation is that Gröbner bases always exist. 

\begin{lemma}
    Every ideal $I\subseteq k[z_1,\ldots, z_N]$ has a Gröbner basis with respect to any term order $\succ$ on the polynomial ring $k[z_1,\ldots, z_N]$. 
\end{lemma}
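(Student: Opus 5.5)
The plan is to work with the \emph{initial ideal} $\textrm{in}_\succ(I) := \langle \textrm{in}_\succ(f) : f\in I\rangle$ and reduce the statement to the Hilbert Basis Theorem, which the paper has already stated. The point is that $\textrm{in}_\succ(I)$ is an ideal generated by monomials (we may drop the nonzero coefficients $c_a$, since $k$ is a field and scaling by units does not change the generated ideal). So, once it is finitely generated, we only need to check that those finitely many generators can be chosen to be initial terms of elements of $I$ itself.

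First, I apply the Hilbert Basis Theorem to $\textrm{in}_\succ(I)$ to get a finite basis $\{h_1,\ldots,h_t\}$. Each $h_j$ lies in the ideal generated by the set $S = \{\textrm{in}_\succ(f) : f\in I\}$, so it is a finite combination $h_j = \sum_\ell q_{j\ell}\,\textrm{in}_\succ(f_{j\ell})$ with $f_{j\ell}\in I$. Let $G = \{f_{j\ell}\}$ be the finite set of all polynomials appearing in these expressions. Then $G \subset I$ and $\langle \textrm{in}_\succ(g) : g\in G\rangle$ contains every $h_j$, hence contains $\textrm{in}_\succ(I)$. The reverse containment is trivial because $G\subseteq I$. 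This already gives the equality demanded in Definition~\ref{def: GB}.

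The only subtlety is the edge case $I = \{0\}$, where $0$ has no initial term. Here I take $G = \emptyset$ (or $\{0\}$ with the convention $\textrm{in}_\succ(0)=0$), and both sides of the defining equality are the zero ideal. The argument never uses properties of $\succ$ beyond the existence of a largest term, which is guaranteed because $\succ$ is a total order on the finitely many monomials of any given polynomial.

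I do not expect a real obstacle, since the Hilbert Basis Theorem carries all the weight. The step needing the most care is the extraction of a finite generating set from $S$. A reader might instead want to invoke Dickson's lemma, namely that every monomial ideal is generated by finitely many of the monomials in any generating set. That route is cleaner but is not stated earlier in the paper, so I would use the "finitely many $f_{j\ell}$ suffice" argument above, which only needs the Hilbert Basis Theorem. If desired, one may additionally remark, though it is not part of the statement, that such a $G$ generates $I$ by the division algorithm.
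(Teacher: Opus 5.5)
The paper gives no proof of this lemma; it states it and defers to \cite{cox}. Your argument is correct. Applying the Hilbert Basis Theorem to $\textrm{in}_\succ(I)$, then collecting the finitely many $f_{j\ell}\in I$ whose initial terms are used to write the $h_j$, yields exactly the equality required in Definition~\ref{def: GB}. Your handling of $I=\{0\}$ is also fine.

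Your route is, however, the reverse of the standard one in \cite{cox}. There, Dickson's lemma (a monomial ideal is generated by finitely many monomials drawn from any monomial generating set) is proved first, by a direct combinatorial induction. It immediately gives $g_1,\ldots,g_s\in I$ with $\textrm{in}_\succ(I)=\langle \textrm{in}_\succ(g_1),\ldots,\textrm{in}_\succ(g_s)\rangle$. The Hilbert Basis Theorem is then \emph{deduced} from this via the division algorithm. Your proof is therefore non-circular only if the Hilbert Basis Theorem comes from an independent source, such as the classical argument that $R$ Noetherian implies $R[x]$ Noetherian. That is legitimate here, since the paper states the theorem as a black box.

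What your route buys is brevity. It also makes visible that existence needs nothing about $\succ$ beyond the existence of initial terms.

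What the Dickson route buys is the following:
\begin{itemize}
\item self-containment;
\item monomial generators chosen directly from $\{\textrm{in}_\succ(f): f\in I\}$, with no coefficient bookkeeping;
\item the fact that a term order in the paper's sense is a well-order, which is exactly what the division algorithm needs to turn your closing remark (that $G$ generates $I$) into a proof.
\end{itemize}
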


Aside, from existence, there are algorithms for computing Gröbner bases.  
The classical algorithm is known as Buchberger's Algorithm. 
We skip the details of these algorithms here, but a very accessible and detailed discussion of these methods is given in \cite{cox}. 
% The same reference also provides a completely thorough and rigorous development of the algebra and geometry we discussed above. 
We note, however, that there is easy-to-use software for computing Gröbner bases for yourself. 
One language that can be used online is called \texttt{Macaulay2} \cite{M2}  (\url{https://www.unimelb-macaulay2.cloud.edu.au/#home}). 
The following code snippet is an example of computing a Gröbner basis for the ideal $I_\varphi$ for the graphical model in Example~\ref{ex: binomial graph ideal}. 

\begin{programcode}{Computing a Gröbner Basis for $I_\varphi$ in Macaulay2}
\begin{verbatim}
    S = QQ[t, p0, p1, p2, MonomialOrder => Lex]; 
    I = ideal(p0 - (1 - t)^2, p1 - t*(1 - t), p2 - t^2);
    gens gb I
\end{verbatim}
\end{programcode}

\begin{example}
    \label{ex: binomial M2}
The output of the code above is the Gröbner basis for the ideal $I_\varphi$ in Example~\ref{ex: binomial graph ideal} with respect to the lexicographic term order on $\mathbb Q[\theta, p_0,p_1,p_2]$. Since all polynomials generating the ideal $I_\varphi$ have only rational coefficients, this Gröbner basis is also the Gröbner basis for the same ideal in $\mathbb R[\theta,p_0,p_1,p_2]$.   It looks like this
\[
p_1^2 + 4p_1p_2 + 4p_2^2 - 4p_2, \quad p_0 + p_1 + p_2 - 1, \quad 2\theta - p_1 - 2p_2.
\]
Note that exactly one polynomial in this Gröbner basis for $I_\varphi \subseteq \mathbb R[\theta, p_0, p_1, p_2]$ uses the variable $\theta$.  
If we simply forget this polynomial we can generate an ideal in the polynomial ring $\mathbb R[p_0, p_1, p_2]$: 
\[
I_1 = \langle p_1^2 + 4p_1p_2 + 4p_2^2 - 4p_2, p_0 + p_1 + p_2 - 1\rangle \subseteq \mathbb R[p_0, p_1, p_2]. 
\]
Now let
\begin{equation}
    \begin{split}
        h_1 &=p_1^2 + 4p_1p_2 + 4p_2^2 - 4p_2, \\
        h_2 &= p_0 + p_1 + p_2 - 1
    \end{split}
\end{equation}
denote our generators for the ideal $I_1$, and
\begin{equation}
    \begin{split}
        g_1 &=p_1^2 -4p_0p_2, \\
        g_2 &= p_0 + p_1 + p_2 - 1
    \end{split}
\end{equation}
denote the generators for the vanishing ideal of the model $I(\mathcal M^\ast)$ from Example~\ref{ex: binomial model revisited}.
We see that
\[
g_1 = h_1 - 4p_2h_2, \qquad h_2 = g_2 \qquad h_1 = g_1 + 4p_2g_2. 
\]
The first two equations show that $I_1 \subseteq \langle h_1, h_2\rangle =I(\mathcal M^\ast)$, while the last two equations show that $I(\mathcal M^\ast)\subseteq \langle g_1, g_2\rangle = I_1$.
Hence, $I_1 = I(\mathcal M^\ast)$, and we have recovered the vanishing ideal of the model by simply forgetting the polynomials in our Gröbner basis for $I_\varphi$ that use the variables we do not want. 
\end{example}

This technique generalizes. 

\begin{definition}
    \label{def: elimination ideal}
    Let $I$ be an ideal in the polynomial ring $k[z_1,\ldots, z_N]$. 
    For $\ell\in[N]$, the \emph{$\ell$-th elimination ideal} of $I$ is the ideal 
    \[
    I_\ell = I\cap k[z_{\ell +1}, \ldots, z_N].
    \]
\end{definition}
Since an ideal $I\subseteq k[z_1,\ldots, z_N]$ is a set of polynomials, this definition is simply saying that the $\ell$-th elimination ideal is the subset of polynomials in $I$ using only the variables $z_{\ell + 1}, \ldots, z_N$.  
It is an exercise to show that $I_\ell$ is indeed an ideal in the polynomial ring $k[z_{\ell +1}, \ldots, z_N]$.
However, it is not clear that a basis for the ideal $I_\ell$ can be obtained from an arbitrary basis for $I$ by simply forgetting the polynomials using the variables $z_1,\ldots, z_\ell$ (as we did above).  
The magic of Gröbner bases is that they allow us to compute a basis for the ideal $I_\ell$ in precisely this naive way.  

\begin{theorem}[The Elimination Theorem]
    \label{thm: elimination}
    Let $I$ be an ideal in the polynomial ring $k[z_1,\ldots, z_N]$, and let $G$ be a Gröbner basis for $I$ with respect to the lexicographic term order where $z_1\succ \cdots \succ z_N$. 
    Then, for $\ell\in[N]$, the set 
    \[
    G_\ell = G\cap k[z_{\ell + 1}, \ldots, z_N]
    \]
    is a Gröbner basis for the $\ell$-th elimination ideal $I_\ell$.
\end{theorem}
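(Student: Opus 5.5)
We need to prove the Elimination Theorem: with $G$ a Gröbner basis of $I$ for lex order $z_1\succ\cdots\succ z_N$, the set $G_\ell=G\cap k[z_{\ell+1},\ldots,z_N]$ is a Gröbner basis of $I_\ell$. By Definition~\ref{def: GB}, applied inside the ring $k[z_{\ell+1},\ldots,z_N]$ with the restricted lex order (still a term order), it suffices to check two things. First, $G_\ell\subseteq I_\ell$. Second, for every $f\in I_\ell$, the initial term $\textrm{in}_\succ(f)$ lies in the monomial ideal $\langle \textrm{in}_\succ(g): g\in G_\ell\rangle$ of $k[z_{\ell+1},\ldots,z_N]$. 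Then the ideal of all initial terms of $I_\ell$ equals the one generated by initial terms of $G_\ell$.

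The first point is immediate: $G_\ell\subseteq G\subseteq I$, and its elements involve only $z_{\ell+1},\ldots,z_N$.

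For the second, take $f\in I_\ell$. Since $f\in I$ and $G$ is a Gröbner basis of $I$, we have $\textrm{in}_\succ(f)\in\langle \textrm{in}_\succ(g_1),\ldots,\textrm{in}_\succ(g_s)\rangle$. I will use the standard fact that a monomial lies in a monomial ideal iff it is divisible by one of the monomial generators. This fact is proved by expanding $\sum h_i\,\textrm{in}(g_i)$ and comparing terms. So some $g\in G$ has $\textrm{in}_\succ(g)$ dividing $\textrm{in}_\succ(f)$. Since $f$ involves only $z_{\ell+1},\ldots,z_N$, so does $\textrm{in}_\succ(f)$, and therefore so does its divisor $\textrm{in}_\succ(g)$.

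The key step, which is where the lexicographic order is essential, is to show that $g$ itself lies in $k[z_{\ell+1},\ldots,z_N]$. Under lex order, any monomial containing a positive power of some $z_1,\ldots,z_\ell$ is strictly larger than every monomial in $z_{\ell+1},\ldots,z_N$ alone. For such a monomial $z^b$ and such a $z^a$, the leftmost nonzero entry of $b-a$ occurs at an index $\le \ell$ and is positive. Hence if any term of $g$ involved $z_1,\ldots,z_\ell$, that term would exceed $\textrm{in}_\succ(g)$, contradicting maximality of the initial term. Thus $g\in G_\ell$, and $\textrm{in}_\succ(f)$ lies in the ideal generated by $\{\textrm{in}_\succ(g):g\in G_\ell\}$ in the smaller ring.

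The main obstacle, though a mild one, is justifying the divisibility characterization of membership in a monomial ideal. This is also what transfers the membership from the big ring to the small ring: divisibility by a generator does not depend on the ambient ring. I would state and briefly prove that lemma first, citing \cite{cox}, and then run the argument above.
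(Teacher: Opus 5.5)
Your proof is correct and complete. It is the standard argument from Cox--Little--O'Shea \cite{cox}: divisibility of $\textrm{in}_\succ(f)$ by some $\textrm{in}_\succ(g)$ with $g \in G$, followed by the lex-order observation that a polynomial whose initial term avoids $z_1,\ldots,z_\ell$ avoids those variables entirely. The paper states this theorem without proof and defers to \cite{cox}, so there is no separate argument in the paper to compare against.
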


When the ideal in Theorem~\ref{thm: elimination} is the ideal $I_\varphi$ for some rational map $\varphi$ as in~\eqref{eqn: elimination map}, the elimination ideal that forgets the parameters $\theta_1,\ldots, \theta_d$ and the extra variable $y$ is exactly the vanishing ideal of our statistical model. 

\begin{theorem}[Rational Implicitization]
    \label{thm: rational implicitization}
    Let $k$ be an infinite field, let $\varphi: k^d \setminus W \to k^N$ be a rational map, and consider the ideal $I^\varphi$ living in the polynomial ring $k[\theta_1,\ldots, \theta_d, y, z_1,\ldots, z_N]$ equipped with the lexicographic term order $\succ$ with $\theta_1 \succ \cdots \succ\theta_d \succ y \succ z_1 \succ\cdots\succ z_N$. 
    Then $V((I_\varphi)_{d + 1})= \overline{\varphi(k^d\setminus W)}$, and $I(\varphi(k^d\setminus W))$ has basis $G_{d+1}$, where $G$ is a Gröbner basis for $I_\varphi$ with respect to $\succ$. 
\end{theorem}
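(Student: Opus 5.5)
The plan is to prove the ideal identity $(I_\varphi)_{d+1} = I(\varphi(k^d\setminus W))$ directly. The basis statement then follows from the Elimination Theorem (Theorem~\ref{thm: elimination}), and the variety statement follows from Lemma~\ref{zariski closure}. Write $R = k[\theta_1,\ldots,\theta_d,y,z_1,\ldots,z_N]$, $J = I_\varphi$ and $g = g_1\cdots g_N$. Since $\theta_1,\ldots,\theta_d,y$ are the first $d+1$ variables in the lexicographic order, Theorem~\ref{thm: elimination} shows that $G_{d+1} = G\cap k[z_1,\ldots,z_N]$ is a Gröbner basis, and in particular a basis, of $J_{d+1} = J\cap k[z_1,\ldots,z_N]$. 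Once the ideal identity is established, Lemma~\ref{zariski closure} gives
\[
V(J_{d+1}) = V(I(\varphi(k^d\setminus W))) = \overline{\varphi(k^d\setminus W)}.
\]

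\textbf{The inclusion $J_{d+1}\subseteq I(\varphi(k^d\setminus W))$.} This direction is a direct evaluation. Take $h\in J_{d+1}$ and write
\[
h = \sum_i a_i\,(g_i z_i - f_i) + b\,(g y - 1)
\]
with $a_i, b\in R$. For $\theta\notin W$, substitute $z_i = f_i(\theta)/g_i(\theta)$ and $y = 1/g(\theta)$. Every generator then vanishes. Since $h$ does not involve $\theta$ or $y$, this shows $h(\varphi(\theta)) = 0$.

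\textbf{The inclusion $I(\varphi(k^d\setminus W))\subseteq J_{d+1}$.} This is the main step. Take $h\in k[z]$ vanishing on $\varphi(k^d\setminus W)$. Consider the $k$-algebra map
\[
\Phi: R\to k(\theta), \qquad \theta\mapsto\theta,\quad y\mapsto 1/g,\quad z_i\mapsto f_i/g_i.
\]
First I show $\ker\Phi = J$. One inclusion is clear. For the other, work modulo $J$: replace each $z_i$ by $f_i y\prod_{j\neq i} g_j$, which is legitimate because $g_i z_i\equiv f_i$ and $g y\equiv 1$ give $z_i\equiv z_i g y\equiv f_i y\prod_{j\ne i}g_j$. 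Any $F\in R$ is then congruent to some $P(\theta,y)\in k[\theta,y]$. If $\Phi(F)=0$, then $P(\theta,1/g)=0$ in $k(\theta)$. Multiplying by a power of $g$ and clearing denominators shows that $P$ is a multiple of $gy-1$. The cleanest way to see this is to divide $P$ by $gy-1$, which is monic in $y$ up to the unit $g$ in $k(\theta)$; alternatively, use that $k[\theta,y]/(gy-1)\cong k[\theta][1/g]$ is a subring of $k(\theta)$. Hence $F\in J$.

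It then remains to show $\Phi(h) = h(f_1/g_1,\ldots,f_N/g_N) = 0$ in $k(\theta)$. This rational function has the form $q/g^M$, and its numerator $q$ vanishes on all of $k^d\setminus W$. Then $qg$ vanishes on all of $k^d$. Because $k$ is infinite, a polynomial vanishing on all of $k^d$ is zero, by the standard induction on $d$. Since $g\ne 0$, we get $q=0$. This is exactly where the infinite-field hypothesis enters.

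The main obstacle I expect is the kernel computation $\ker\Phi = J$, namely handling the interaction of the $y$-variable with the denominators. I would resolve it by the localization identification $R/J\cong k[\theta][1/g]$ sketched above: construct maps in both directions and check that they are mutually inverse on generators. A minor point is that the Gröbner basis property needs to be used only to conclude that $G_{d+1}$ generates $J_{d+1}$, and that follows immediately from Theorem~\ref{thm: elimination}.
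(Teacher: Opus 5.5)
Your proof is correct, and it establishes the ideal equality $I(\varphi(k^d\setminus W)) = (I_\varphi)_{d+1}$ that the ``basis'' half of the statement requires. The paper gives no proof of this theorem. It defers to Cox--Little--O'Shea, and the discussion before the statement only sketches the geometric picture: $V(I_\varphi)$ is the graph $\{(\theta,1/g(\theta),\varphi(\theta)) : \theta\notin W\}$, its projection to the $z$-coordinates is $\varphi(k^d\setminus W)$, and eliminating $\theta$ and $y$ computes the closure of that projection.

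Made rigorous, that route rests on the Closure Theorem. The Closure Theorem needs an algebraically closed field, so for a general infinite $k$ one must extend scalars and then descend. As stated, it also identifies varieties rather than ideals. Over a non-closed field such as $\mathbb R$, the equality $V((I_\varphi)_{d+1}) = \overline{\varphi(k^d\setminus W)}$ does not by itself give the ideal equality. For example, the prime $\langle z_1^2+z_2^2\rangle\subset\mathbb R[z_1,z_2]$ is strictly smaller than the vanishing ideal of its real zero set.

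Your route is purely algebraic. You identify $I_\varphi$ with the kernel of the substitution map $R\to k(\theta)$, i.e.\ $R/I_\varphi\cong k[\theta][1/g]$. This makes the elimination ideal the kernel of $k[z]\to k(\theta)$, $z_i\mapsto f_i/g_i$. The infinite-field hypothesis is then used exactly once, to turn ``vanishes on the image'' into ``is zero in $k(\theta)$''. This gives both claims at once, uniformly over every infinite field, with no Nullstellensatz or Closure Theorem, and shows in passing that the elimination ideal is prime. What the geometric route offers in return, over an algebraically closed field, is that the image contains a nonempty Zariski-open subset of its closure.

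Two small points. First, the ``divide $P$ by $gy-1$'' variant, done in $k(\theta)[y]$, leaves a quotient with coefficients in $k(\theta)$. To pull it back into $k[\theta,y]$ you need Gauss's lemma, or more simply the observation that $g^DP\equiv g^DP(\theta,1/g)=0$ modulo $gy-1$ (with $D=\deg_y P$) while $g$ is a unit modulo $gy-1$. Your localization alternative is fine as stated. Second, the degenerate case where some $g_i$ is the zero polynomial, so that $\Phi$ is undefined, is trivial: then $-1=gy-1\in I_\varphi$ and $\varphi(k^d\setminus W)=\emptyset$.
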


This result means that we can use computational algebra software, such as Macaulay2 \cite{M2}, to explicitly compute a basis of model-defining polynomial constraints for a given parametric graphical model. 

\begin{example}
    \label{ex: UG implicitization}
    Let $\mathcal G$ be the undirected graph from Example~\ref{ex: UG path example}.  
    We can use Macaulay2 and Theorem~\ref{thm: rational implicitization} to verify the claim in~\eqref{eqn: UG vanishing ideal} that the vanishing ideal $I(\mathcal M_{\textrm{Gauss}}^\ast(\mathcal G))$ has basis
    \[
    \sigma_{12}\sigma_{23}- \sigma_{22}\sigma_{13}. 
    \]
    To do this, recall the parameterization $\varphi_\mathcal G$ of the undirected Gaussian graphical model $\mathcal M_{\textrm{Gauss}}(\mathcal G)$ given in~\eqref{eqn: UG cov param}. 
    Note that the ideal $I_{\varphi_\mathcal G}$ lives in the polynomial ring $\mathbb R[k_{11}, k_{22}, k_{33}, k_{12}, k_{23},y, \sigma_{11}, \sigma_{22},\sigma_{33}, \sigma_{12},\sigma_{13}, \sigma_{23}]$, where the $k_{ij}$ denote the concentration parameters and $\sigma_{ij}$ are the covariance parameters.  
    $\mathcal M^\ast_{\textrm{Gauss}}(\mathcal G)$ lives in the space of $3\times 3$ covariance matrices $\Sigma = (\sigma_{ij})_{i,j=1}^3$, so we want to eliminate the $k_{ij}$. 

    Following, the story outlined above, we start by defining the ideal 
    \[
    I_{\varphi_\mathcal G} = \langle \{|K|\sigma_{ij} - (-1)^{i+j}|K_{[3]\setminus\{j\},[3]\setminus \{i\}}| : 1 \leq i \leq j\leq 3\}\cup \{y|K| - 1\} \rangle.
    \]
    We can then use Macaulay2 to compute a Gröbner basis for this ideal with respect to the lexicographic term order. 
    \begin{verbatim}
    
S = QQ[k11, k22, k33, k12, k23, y, s11, s22, s33, s12, s13, s23, 
       MonomialOrder=> Lex];

K = matrix{{k11, k12, 0},
           {k12, k22, k23},
           {0,k23, k33}};
           
g = det(K);
f11 = det(submatrix(K, {1,2},{1,2}));
f22 = det(submatrix(K, {0,2},{0,2}));
f33 = det(submatrix(K, {0,1},{0,1}));
f12 = det(submatrix(K, {1,2},{0,2}));
f13 = det(submatrix(K, {1,2},{0,1}));
f23 = det(submatrix(K, {0,2},{0,1}));

I = ideal(g*s11 - f11, g*s22 - f22, g*s33 - f33, 
          g*s12 + f12, g*s13 - f13, g*s23 + f23, 
          g*y - 1);

gens gb I
    \end{verbatim}

The result is a Gröbner basis for the ideal $I_{\varphi_\mathcal G}$ with respect to the lexicographic term order.  It contains sixteen polynomials:
\[
\begin{array}{cc}
\mathit{s22}\,\mathit{s13}-\mathit{s12}\,\mathit{s23}&
\mathit{k23}\,\mathit{s33}\,\mathit{s12}-\mathit{k23}\,\mathit{s13}\,\mathit{s23}+\mathit{s13}\\
y-\mathit{s11}\,\mathit{s22}\,\mathit{s33}+\mathit{s11}\,\mathit{s23}^{2}+\mathit{s33}\,\mathit{s12}^{2}-\mathit{s12}\,\mathit{s13}\,\mathit{s23}&\mathit{k23}\,\mathit{s22}\,\mathit{s33}-\mathit{k23}\,\mathit{s23}^{2}+\mathit{s23}\\
\mathit{k12}\,\mathit{s11}\,\mathit{s23}-\mathit{k12}\,\mathit{s12}\,\mathit{s13}+\mathit{s13}&\mathit{k12}\,\mathit{s11}\,\mathit{s22}-\mathit{k12}\,\mathit{s12}^{2}+\mathit{s12}\\
\mathit{k33}\,\mathit{s23}+\mathit{k23}\,\mathit{s22}&\mathit{k33}\,\mathit{s13}+\mathit{k23}\,\mathit{s12}\\
\mathit{k33}\,\mathit{s33}+\mathit{k23}\,\mathit{s23}-1&\mathit{k22}\,\mathit{s23}+\mathit{k12}\,\mathit{s13}+\mathit{k23}\,\mathit{s33}\\
\mathit{k22}\,\mathit{s13}-\mathit{k12}\,\mathit{k23}\,\mathit{s11}\,\mathit{s33}+\mathit{k12}\,\mathit{k23}\,\mathit{s13}^{2}&\mathit{k22}\,\mathit{s12}+\mathit{k12}\,\mathit{s11}+\mathit{k23}\,\mathit{s13}\\
\mathit{k22}\,\mathit{s22}+\mathit{k12}\,\mathit{s12}+\mathit{k23}\,\mathit{s23}-1&\mathit{k11}\,\mathit{s13}+\mathit{k12}\,\mathit{s23}\\
\mathit{k11}\,\mathit{s12}+\mathit{k12}\,\mathit{s22}&\mathit{k11}\,\mathit{s11}+\mathit{k12}\,\mathit{s12}-1
\end{array}
\]

We see that exactly one polynomial uses only the variables $\sigma_{ij}$ (which are denoted \texttt{sij} in the above list). 
By Theorem~\ref{thm: rational implicitization}, this polynomial is the sole generator for the vanishing ideal of the model $\mathcal M_{\textrm{Gauss}}(\mathcal G)$.  
Indeed, this is the first polynomial in the above list
$
\sigma_{22}\sigma_{13} - \sigma_{12}\sigma_{23},
$
so we have that 
\[
I(\mathcal M_{\textrm{Gauss}}^\ast(\mathcal G)) = \langle \sigma_{22}\sigma_{13} - \sigma_{12}\sigma_{23}\rangle,
\]
which verifies the claim in~\eqref{eqn: UG vanishing ideal}.

\end{example}

Example~\ref{ex: UG implicitization} is particularly nice, since the basis we recovered with our computational methods has a natural statistical interpretation via~\eqref{eqn: Gauss CI}; namely it is encoding the global Markov property for the graphical model.  
Sometimes, we will get lucky and the polynomials produced by these computational methods will be easy to interpret. 
They may even reveal a general result for the constraints defining each model in the family.  
However, this luck is contingent on the term order being well-chosen. 
Indeed, changing the term order changes the Gröbner basis, and one basis may be more statistically interpretable than another. 

A second caveat is that computational methods become infeasible as the number of parameters grows. 
So we use these methods to get a sense of the constraints defining low-dimensional models in the family, and then prove results based on these observations. 

\begin{remark}[On the study of toric models]
    \label{rem: toric}
    One family of varieties where proving a specific form of a Gröbner basis is more achievable are (affine) \emph{toric varieties}, which are models parameterized by rational functions $\varphi$ in which each rational function is a \emph{monomial}; i.e.,  $\frac{f_i(\theta)}{g_j(\theta)} = \theta_1^{a_{1,1}}\cdots \theta_d^{a_{d,1}}$ for all $i = 1,\ldots, N$. 
    For these models, the vanishing ideal is always generated by \emph{binomials} (i.e., polynomials of the form $z^a - z^b$), and these binomials are related in the combinatorial structure of an associated \emph{polytope}, which is the convex hull of the exponent vectors $(a_{1,i},\ldots, a_{d,i})$ for $i \in[N]$. 
    
    This can solve both problems we just described. 
    The binomials often have a nice interpretation due to the combinatorics of the polytope, and this combinatorial structure makes it easier to prove a general form for a basis of the vanishing ideal. 
    % This combinatorial encoding can make it easier to compute a general form for the basis of the vanishing ideal, giving us useful equations for tasks such as model selection and model distinguishability. 
    However, many toric models are not obviously toric from their standard parameterizations, which is one reason there are many papers in the field of \emph{algebraic statistics} that describe when  certain statistical models have toric vanishing ideals \cite{coons23, DG, DS, geiger, gorgen22, maraj26, misra21, misra22, nicklasson}. 
\end{remark}

In some cases, there is an easier way to extract polynomials in the vanishing ideal for \emph{any} model in the family, and these polynomials are often directly statistically interpretable. 
This method is a different version of algebraic implicitization, which we discuss in the next section.

\subsection{Generically birational implicitization: An algebraic Markov property}
\label{subsubsec: candidate polynomials}

The computational methods described in the previous section give us a useful tool for finding a set of polynomial constraints that define a \emph{single} dense statistical model.  
However, if one intends to use Gröbner bases to compute these constraints for an arbitrary model $\mathcal{M}_{f, \Theta}(\mathcal G)$ in some graphical model family $\mathcal F_{\mathbb G, \Psi}$, then we would have to go beyond Macaulay2 and prove that a certain set of polynomials is indeed a Gröbner basis for the vanishing ideal of each model in the family. 
Accomplishing this task can be more or less difficult, depending on the family $\mathcal F_{\mathbb G, \Psi}$.

However, for many parametric families of graphical models there is an alternative (algebraic) approach to finding an implicit representation of the models. 
This technique amounts to finding an algebraic analogue of a Markov property. 

Our next techique is easiest to understand if we start with a dense graphical model family $\mathcal F_{\mathbb G, \Psi}$ that satisfies global rational (parameter) identifiability (see Definition~\ref{def: global identifiability}).
A parametric graphical model with rational realization $\mathcal M_{f,\Theta}^\ast(\mathcal G)\subseteq k^N$ satisfies global rational identifiability if for all $(z_1,\ldots, z_N)\in \mathcal M_{f, \Theta}^\ast(\mathcal G)$ and $i\in[d]$
\[
\theta_i = \frac{f_i(z_1,\ldots, z_N)}{g_i(z_1,\ldots, z_N)} 
\]
for some polynomials $f_i,g_i\in k[z_1,\ldots, z_N]$. 
In particular, if the parameter space $\Theta$ for $\mathcal M_{f, \Theta}^\ast(\mathcal G)$ satisfies some polynomial equation
\begin{equation}
\label{eqn: parameter constraint}
h(\theta_1,\ldots, \theta_d) = 0
\end{equation}
then substituting $\frac{f_i(z_1,\ldots, z_N)}{g_i(z_1,\ldots, z_N)}$ for every $\theta_i$ in $h(\theta_1,\ldots, \theta_d)$ and multiplying by a sufficiently large power $N$ of $g_1(z_1,\ldots, z_N)\cdots g_d(z_1,\ldots, z_N)$ produces a polynomial in $k[z_1,\ldots, z_N]$ that must belong to the vanishing ideal $I(\mathcal M_{f, \Theta}^\ast(\mathcal G))$: 
\begin{equation}
\label{eqn: clearing denominators}
g = (g_1(z_1,\ldots, z_N)\cdots g_d(z_1,\ldots, z_N))^Nh\left(\frac{f_i(z_1,\ldots, z_N)}{g_i(z_1,\ldots, z_N)}\right) \in I(\mathcal M_{f, \Theta}^\ast(\mathcal G)).
\end{equation}
The polynomial $g$ is simply a translation of the constraint $h(\theta_1,\ldots, \theta_d) = 0$ on our parameter space $\Theta\subseteq k^d$ to a constraint on the model.  
As the following example shows, polynomials obtained in this way amount to an algebraic generalization of the Markov properties for CI graphical models discussed in Sections~\ref{subsubsec: UG models} and~\ref{subsubsec: DAG models}. 

\begin{example}
    \label{ex: markov polynomials}
    Let $\mathcal G\in \mathbb{DAG}$, and consider the linear Gaussian DAG model $\mathcal M_{\textrm{Gauss}}(\mathcal G)$ from Section~\ref{subsubsec: parametric DAGs}.  For this model, the only polynomial equations satisfied by the parameters $(\lambda, \omega)$ are 
\[
\lambda_{ij} = 0 \qquad \textrm{ for $i\to j \notin E$.}
\]
Since these models satisfy global rational identifiability via the rational functions in~\eqref{eqn: Gaussian DAG identification}, we can make the substitutions described above to recover 
\[
\begin{split}
0 &= \lambda_{ij}\\
0 &=\frac{|\Sigma_{\{i\}\cup \pa(j),\{j\}\cup \pa(j)}|}{|\Sigma_{\pa(j), \pa(j)}|} \\
0 &= |\Sigma_{\pa(j), \pa(j)}|\cdot\frac{|\Sigma_{\{i\}\cup \pa(j),\{j\}\cup \pa(j)}|}{|\Sigma_{\pa(j), \pa(j)}|} \\
0 &= |\Sigma_{\{i\}\cup \pa(j),\{j\}\cup \pa(j)}| 
\end{split}
\]
Since $\lambda_{ij} = 0$ and $\lambda_{ij} = \frac{|\Sigma_{\{i\}\cup \pa(j),\{j\}\cup \pa(j)}|}{|\Sigma_{\pa(j), \pa(j)}|}$ hold for every parameter choice $(\lambda, \omega) \in \Theta$ and the corresponding $\Sigma = \varphi(\lambda, \omega)\in \mathcal M_{\textrm{Gauss}}^\ast(\mathcal G)$, it follows that the final (polynomial) equation must be true for every $\Sigma \in \mathcal M_{\textrm{Gauss}}^\ast(\mathcal G)$.  In other words, 
\[
|\Sigma_{\{i\}\cup \pa(j),\{j\}\cup \pa(j)}| \in I(\mathcal M_{\textrm{Gauss}}^\ast(\mathcal G)).
\]
By~\eqref{eqn: Gauss CI}, we know that $|\Sigma_{\{i\}\cup \pa(j),\{j\}\cup \pa(j)}| = 0$ if and only if $X_i \independent X_j \mid X_{\pa(j)}$ whenever $X\sim \mathcal N(0,\Sigma)$.  
Moreover, since all distributions in this model are Gaussian, it can be shown that satisfying these CI relations for all $j\in[m]$ and $i\in \nd(j)\setminus\pa(j)$ is equivalent to satisfying the local Markov property for $\mathcal G$. 
In particular, the polynomials recovered via the method described above are (algebraically) recovering the local Markov property for DAGs in the Gaussian setting. 
\end{example}

\begin{example}
    \label{ex: birationality for UGs}
    It is a nice exercise to follow the same procedure as in Example~\ref{ex: markov polynomials} to identify a family of polynomials belonging to the vanishing ideal of an \emph{undirected} Gaussian graphical model $\mathcal M_{\textrm{Gauss}}(\mathcal G)$.  
    In this case, we obtain a family of polynomials belonging to the vanishing ideal $I(\mathcal M^\ast_{\textrm{Gauss}}(\mathcal G))$ for an undirected graph $\mathcal G \in \mathbb{UG}$.  What Markov property do these polynomials recover?
\end{example}

In Examples~\ref{ex: markov polynomials} and~\ref{ex: birationality for UGs}, we saw that the only constraints on the parameter space were constraints that translate into a Markov property for the given graph.  
However, we have also seen examples of parametric graphical model families defined on parameter spaces that satisfy additional polynomial constraints. 
One example are the colored Gaussian DAG models from Section~\ref{subsubsec: colored DAGs}.

\begin{example}
    \label{ex: colored DAGs}
    Consider the following pair of colored DAGs on $3$ nodes:
    \begin{center}
    \begin{tikzpicture}[thick, scale=0.6]
    \node[circle, fill=red!50, draw, inner sep=1pt, minimum width=1pt] (1u) at (0,0)  {$1$};
    \node[circle, fill=green!50, draw, inner sep=1pt, minimum width=1pt] (2u) at (2,0.24) {$2$};
    \node[circle, fill=orange!50, draw, inner sep=1pt, minimum width=1pt] (3u) at (4,-0.2) {$3$};
    
    \draw[->, blue!50, very thick] (1u) -- (2u);
    \draw[->, blue!50, very thick] (2u) -- (3u);

    \node at (-2,0) {$(\mathcal G,c) =$};

    \node[circle, fill=red!50, draw, inner sep=1pt, minimum width=1pt] (1) at (0 + 10,0)  {$1$};
    \node[circle, fill=green!50, draw, inner sep=1pt, minimum width=1pt] (2) at (2 + 10,-0.15) {$2$};
    \node[circle, fill=orange!50, draw, inner sep=1pt, minimum width=1pt] (3) at (4 + 10,0.1) {$3$};
    
    \draw[<-, blue!50, very thick] (1) -- (2);
    \draw[<-, blue!50, very thick] (2) -- (3);

    \node at (-2 + 10,0) {$(\mathcal H,c') =$};
    
    \end{tikzpicture}
    \end{center}
    According to the definition of a colored Gaussian DAG model given in Section~\ref{subsubsec: colored DAGs}, the parameter space of the model $\mathcal M_{\textrm{Gauss}}(\mathcal G, c)$ satisfies the polynomial equation 
    \[
    \lambda_{12} - \lambda_{23} = 0, 
    \]
    while the parameter space of the model $\mathcal M_{\textrm{Gauss}}(\mathcal H, c')$ satisfies a different polynomial equation
    \[
    \lambda_{32} - \lambda_{21} = 0.
    \]
    We saw in Section~\ref{subsubsec: GMP linear Gaussian DAGs} that linear Gaussian DAG models are globally rationally identifiable. 
    In particular, given $\Sigma \in \mathcal M_{\textrm{Gauss}}^\ast(\mathcal G, c)$, the parameter values $(\lambda, \omega)$ satisfying $\varphi_\mathcal G(\lambda, \omega) = \Sigma$ are recovered via the rational functions given in~\eqref{eqn: Gaussian DAG identification}.
    Substituting these formulas into our polynomial equations in the parameters, we obtain the, respective, rational functions
    \begin{equation*}
    \begin{split}
    \frac{|\Sigma_{\pa(2),\{2\}\cup \pa(2)\setminus\{1\}}|}{|\Sigma_{\pa(2),\pa(2)}|} - \frac{|\Sigma_{\pa(3),\{3\}\cup \pa(3)\setminus\{2\}}|}{|\Sigma_{\pa(3),\pa(3)}|} &= \frac{\sigma_{12}}{\sigma_{11}} - \frac{\sigma_{23}}{\sigma_{22}}, \textrm{ and}\\
    \frac{|\Sigma_{\pa(2),\{2\}\cup \pa(2)\setminus\{3\}}|}{|\Sigma_{\pa(2),\pa(2)}|} - \frac{|\Sigma_{\pa(1),\{1\}\cup \pa(1)\setminus\{2\}}|}{|\Sigma_{\pa(1),\pa(1)}|} &= \frac{\sigma_{23}}{\sigma_{33}} - \frac{\sigma_{12}}{\sigma_{22}}.
    \end{split}
    \end{equation*}
    We then produce polynomials by multiplying these rational functions by the product of the denominators of their individual terms.  
    It is not hard to see that the resulting polynomials must belong to the, respective, vanishing ideals since they are derived by simply phrasing the defining equations of the parameter space in terms of the functions of $\sigma_{ij}$ that recover those parameter values. 
    In particular, we have
    \begin{equation*}
        \begin{split}
            &\sigma_{12}\sigma_{22} - \sigma_{11}\sigma_{23}\in I(\mathcal M_{\textrm{Gauss}}^\ast(\mathcal G, c)), \textrm{ and}\\
            &\sigma_{22}\sigma_{23} - \sigma_{12}\sigma_{33}\in I(\mathcal M_{\textrm{Gauss}}^\ast(\mathcal H, c')).
        \end{split}
    \end{equation*}
    It is a healthy exercise to write code in Macaulay2 to verify that each of these polynomials does not belong to the vanishing ideal of the other model.
\end{example}

This technique for recovering polynomials in the vanishing ideal $I(\mathcal M^\ast_{f, \Theta}(\mathcal G))$ amounts to recovering a Markov property of sorts for the model. 
For instance, it is not hard to see that the small example for colored Gaussian DAGs models generalizes. 

\begin{trailer}{A Markov property for colored Gaussian DAG models}
If $(\mathcal G, c)$ is a colored DAG then the following polynomials belong to the vanishing ideal $I(\mathcal M_{\textrm{Gauss}}^\ast(\mathcal G, c))$:
\begin{enumerate}
    \item $
    |\Sigma_{\{i\}\cup\pa(j),\{j\}\cup \pa(j)}|
    $
    whenever $i\to j \notin E$,
    
    \item $
    |\Sigma_{\pa(j),\{j\}\cup \pa(j)\setminus\{i\}}||\Sigma_{\pa(\ell),\pa(\ell)}| - |\Sigma_{\pa(\ell),\{\ell\}\cup \pa(\ell)\setminus\{k\}}||\Sigma_{\pa(j),\pa(j)}|
    $
    whenever the edges $i\to j, k\to \ell$ have the same color, and 
    
    \item $
    |\Sigma_{\{i\}\cup\pa(i),\{i\}\cup\pa(i)}||\Sigma_{\pa(j),\pa(j)}| - |\Sigma_{\{j\}\cup\pa(j),\{j\}\cup\pa(j)}||\Sigma_{\pa(i),\pa(i)}|
    $
    whenever the vertices $i,j\in[m]$ have the same color.
    
\end{enumerate}
\end{trailer}
It is a nice exercise to prove this Markov property and give a statistical interpretation of each polynomial in terms of conditional covariances of $X\sim \mathcal N(0,\Sigma)$. 
Since the Markov property for colored Gaussian DAG models is a generalization of Example~\ref{ex: markov polynomials}, these polynomials can be viewed as an \emph{algebraic analogue} of the Markov properties for CI graphical models. 

The method is not limited to Gaussian models, but applies more generally.  
So we summarize the strategy here. 
We then give some conditions under which the strategy recovers an algebraic Markov property (e.g. polynomial constraints defining the model) in Theorem~\ref{thm: birational MP}.

\begin{trailer}{Generically birational implicitization}
Let $\mathcal M$ be a statistical model with parameter space $\Theta\subseteq k^d$ and rational realization $\mathcal M^\ast = \varphi(\Theta)\subseteq k^N$. 
Let $\psi: Y \to \Theta$ be a rational map  where $\psi(z) = (f_1(z)/g_1(z),\ldots, f_d(z)/g_d(z))$ for $z\in Y$ and $Y$ is a dense subset of $\mathcal M^\ast$. 
\begin{enumerate}
    \item[] For a polynomial $h(\theta_1, \ldots, \theta_d)$ evaluating to $0$ on all $\theta \in \Theta$: 
    \begin{enumerate}
        \item[(1)] Construct the rational function $r_h(z) = h(f_1(z)/g_1(z),\ldots, f_d(z)/g_d(z))$.
        \item[(2)] Find a positive integer $N$ such that $(g_1(z)\cdots g_d(z))^Nr(z)$ is a polynomial in $z_1\ldots, z_N$.
        \item[(3)] Return the polynomial $p_h(z) = (g_1(z)\cdots g_d(z))^Nr(z)$.
    \end{enumerate}
\end{enumerate}
    
\end{trailer}

As we saw in our examples above, the right choice of rational map $\psi$ will return polynomials $p_h$ in the vanishing ideal of the model $\mathcal M$, and this set of polynomials amounts to an algebraic Markov property for the parametric graphical model.  
This will work if, for \emph{almost all} distributions in the model, the rational map $\psi$ is recovering the parameters defining the distribution.  
One condition under which there is a good candidate for a $\psi$ with this property is known as \emph{generic identifiability}.

\begin{definition}[Generic identifiability]
    \label{def: generic identifiability}
    Let $\mathcal M$ be a statistical model with parameter space $\Theta \subseteq k^d$ and rational realization $\mathcal M^\ast = \varphi(\Theta)\subset k^N$. 
    We say that $\mathcal M$ satisfies \emph{generic identifiability} if there exists a rational map $\psi$ and a variety $U\subset k^d$ such that $\psi (\varphi(\theta)) = \theta$ for all $\theta \in \Theta\setminus U$ and $\Theta \cap U$ has strictly lower dimension than $\Theta$. 
\end{definition}

If $\mathcal M$ is generically identifiable and the smallest affine variety containing $\varphi(\Theta\cap U)$ intersects $\mathcal M^\ast$ in a strictly lower-dimensional subset of $\mathcal M^\ast$, then $\psi$ acts as an inverse on $\mathcal M^\ast$ minus the measure zero set $\mathcal M^\ast \cap \overline{\varphi(\Theta\cap U)}$.  
Since the smallest affine variety containing $\mathcal M^\ast\setminus \overline{\varphi(\Theta\cap U)}$ will add back in this missing piece, then any polynomial constraints arising from $\psi$ will naturally belong to the vanishing ideal $I(\mathcal M^\ast)$.  
In other words, the polynomials $p_h$ obtained from generically birational implicitization will be valid constraints on the model, yielding an `algebraic' Markov property for $\mathcal M$. 

\begin{theorem}[generically birational implicitization]
    \label{thm: birational MP}
    Let $\mathcal M$ be a generically identifiable statistical model with rational realization $\mathcal M^\ast = \varphi(\Theta)$.  
    If $\overline{\varphi(\Theta\cap U))}\cap \mathcal M^\ast$ has strictly lower dimension than $\mathcal M^\ast$, then all polynomials $p_h$ returned by  generically birational implicitization belong to the vanishing ideal $I(\mathcal M^\ast)$. 
\end{theorem}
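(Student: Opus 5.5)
The plan is to show that each $p_h$ vanishes on a dense subset of $\mathcal M^\ast$, and then pass to the Zariski closure. Write $Z = \overline{\varphi(\Theta\cap U)}$ and set $Y' = \mathcal M^\ast\setminus Z$.

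We first remove the points where the denominators are undefined. Let $D = V(g_1\cdots g_d)\subseteq k^N$ be the common zero set of the denominators of $\psi$. We will check that $D\cap \mathcal M^\ast$ is also lower dimensional. On $Y'$, every point has the form $z=\varphi(\theta)$ with $\theta\in\Theta\setminus U$. For such $\theta$, $\psi(\varphi(\theta))=\theta$ is defined, so $\psi$ is defined at $z$ and no $g_i(z)$ vanishes. Therefore $\mathcal M^\ast\subseteq Y'\cup (Z\cap \mathcal M^\ast)$ and $Y'\cap D=\emptyset$.

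Next we show that $p_h$ vanishes on $Y'$. Take $z=\varphi(\theta)\in Y'$ with $\theta\in\Theta\setminus U$. Then $\psi(z)=\theta$, so
\[
r_h(z)=h(\psi(z))=h(\theta)=0 .
\]
Because the $g_i(z)$ are all nonzero, $p_h(z)=(g_1(z)\cdots g_d(z))^N r_h(z)=0$. This also holds when the exponent $N$ is chosen only to clear denominators, since a polynomial identity of rational functions that holds away from $D$ gives the same value at $z$.

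Finally we pass to the closure; this is the main obstacle. The tempting but incorrect route is this: $\mathcal M^\ast = Y'\cup(Z\cap\mathcal M^\ast)$ with the second piece lower dimensional, so $Y'$ should be Zariski dense in $\mathcal M^\ast$, giving $\overline{Y'}\supseteq \mathcal M^\ast$. Then, since $p_h$ vanishes on $Y'$, it vanishes on $V(I(Y'))=\overline{Y'}$ by Lemma~\ref{zariski closure}, hence on $\mathcal M^\ast$, so $p_h\in I(\mathcal M^\ast)$. The weak step is the density claim. A lower-dimensional piece cannot fill out $\mathcal M^\ast$, but this needs irreducibility: $\overline{\mathcal M^\ast}$ must be irreducible, or else every component of $\overline{\mathcal M^\ast}$ must meet $Y'$ in full dimension. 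I would obtain irreducibility from $\mathcal M^\ast$ being the image of the (irreducible, when $\Theta$ has nonempty interior or is Zariski dense in an irreducible set) parameter space under the rational map $\varphi$. The closure of such an image is irreducible because $I(\varphi(\Theta))$ is the preimage of a prime ideal, namely the kernel of the pullback into the function field. Given irreducibility, we argue $\overline{\mathcal M^\ast}=\overline{Y'}\cup (Z\cap\overline{\mathcal M^\ast})$. Since $Z\cap\overline{\mathcal M^\ast}$ is a proper subvariety by the dimension hypothesis, irreducibility forces $\overline{Y'}=\overline{\mathcal M^\ast}$. This completes the argument.
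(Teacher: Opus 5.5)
Your proposal is correct and follows the same route as the paper's sketch. On $\mathcal M^\ast\setminus\overline{\varphi(\Theta\cap U)}$ the map $\psi$ inverts $\varphi$, so every $p_h$ vanishes there, and taking Zariski closures recovers all of $\mathcal M^\ast$.

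What you add is the irreducibility that the paper's ``the closure adds back the missing piece'' step uses silently. It is genuinely needed: a reducible $\Theta$ with an isolated point lying in $U$ can violate the conclusion. In the paper's standing dense setting it holds automatically, because then $\overline{\mathcal M^\ast}=\overline{\varphi(k^d\setminus W)}$ is the closure of the image of the irreducible set $k^d\setminus W$.
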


\begin{remark}
The method discussed here is a version of ``implicitization up to saturation" in the language of algebraic geometry. 
For a statistical model fulfilling the assumptions of Theorem~\ref{thm: birational MP} with rational realization $\mathcal M^\ast$, we have computed a basis $\{h_1,\ldots, h_s\}$ for an ideal $I\subseteq k[z_1,\ldots, z_N]$ such that there is a multiplicatively closed set of polynomials $S\subseteq k[z_1,\ldots, z_N]$ satisfying
\[
I(\mathcal M^\ast) = I : S,
\]
where 
\[
I: S = \{f\in k[z_1,\ldots, z_N] : gf\in I \textrm{ for some } g\in S\}
\]
is called the \emph{saturation} of $I$ by $S$. 
The set $S$ is produced by taking products of the denominators $g_i(z)$ in our rational functions identifying the parameters $\theta_i = f_i(z)/g_i(z)$. 
The ideal $I:S$ is undoing (as much as possible) the multiplication by elements of $S$ that we did in~\eqref{eqn: clearing denominators} to produce the polynomials $h_1,\ldots, h_s$. %, to extract the vanishing ideal from the ideal $I = \langle h_1,\ldots, h_s\rangle$. 
Geometrically, $V(I:S)$ is all solutions to the system of equations $h_1(z) = 0,\ldots, h_s(z) = 0$ that are \emph{not} solutions to $g_1(z)\cdots g_d(z)$. 
The solutions we are throwing away are exactly those that make the product of denominators $g_1(z)\cdots g_d(z)$ evaluate to $0$. 
These solutions cannot correspond to distributions in our model $\mathcal M$.  
Hence, $\{h_1,\ldots, h_s\}$ is a sufficient set of constraints to define our model when we toss out a `non-probabilistic' subset of $k^N$. 

This algebraic statement is proven in \cite[Lemma 10]{boege}. 
The method can also be extended to models where we need to understand the constraints on $\mathcal M^\ast$ coming from polynomial \emph{inequalities} defining $\Theta$, which replace~\eqref{eqn: parameter constraint} with $h(\theta_1,\ldots, \theta_d)> 0$.  
This extension is described in rigorous detail in \cite{boegeSolus}, but it works just as above.  
In our examples, the only polynomial inequalities defining our parameter space are $\omega_i >0$, which translate to model-defining constraints $|\Sigma_{[i],[i]}|>0$.  
We did not consider these here, since they are equivalent to asserting that the covariance matrix is positive definite (a triviality). 
\end{remark}

Unlike the Gröbner basis we could compute for an individual model in Section~\ref{subsubsec: gröbner bases}, the polynomials recovered via Theorem~\ref{thm: birational MP} have a closed-form expression for \emph{any} model $\mathcal M_{f, \Theta}(\mathcal G)$ in our parametric graphical model family $\mathcal F_{\mathbb G, \Psi}$.   
As we will see in the coming sections, the polynomials constituting the \emph{algebraic Markov property} obtained via Theorem~\ref{thm: birational MP} are useful in solving model distinguishability problems, just as Markov properties were useful for the same problem in the case of CI DAG models.

\subsection{An algebraic strategy for proving structural identifiability}
\label{subsubsec: algebraic strategy}

In this section, we describe how to use the properties of ideals and varieties described in Section~\ref{subsec: polynomial constraints}, and the technique of generically birationally implicitization in Theorem~\ref{thm: birational MP}, to prove model distinguishability results in the graphical models program. 
In this section, we assume $\mathcal F_{\mathbb G, \Psi}$ is a dense family of parametric graphical models (see Definition~\ref{def: dense family}). 
Our goal is to outline an algebraic strategy for proving that $\mathcal M_{f, \Theta}(\mathcal G) \neq  \mathcal M_{f, \Theta}(\mathcal H)$ whenever $\mathcal G \neq \mathcal H$. 
This gives a general technique for proving structural identifiability of the family $\mathcal F_{\mathbb G, \Psi}$ (see Definition~\ref{def: structural identifiability}). 
Consequently, the technique will settle the model distinguishability question in the graphical models program for $\mathcal F_{\mathbb G, \Psi}$, while also expanding the very short list of structurally identifiable graphical models families given in Section~\ref{subsubsec: structural identifiability}. 

The strategy is rooted in the following observation.

\begin{proposition}
    \label{prop: algebraic model equvialence}
    Let $\mathcal M_{f, \Theta}(\mathcal G), \mathcal M_{f, \Theta}(\mathcal H)$ be two models in the parametric graphical model family $\mathcal F_{\mathbb G, \Psi}$. 
    \begin{enumerate}
        \item If $\mathcal M_{f, \Theta}^\ast(\mathcal G)$ and $\mathcal M_{f, \Theta}^\ast(\mathcal H)$ have different dimensions, then they are not model equivalent. 
        \item If the models have the same dimension then they are model equivalent if and only if $I(\mathcal M^\ast_{f, \Theta}(\mathcal G)) = I(\mathcal M^\ast_{f, \Theta}(\mathcal H))$
    \end{enumerate}
\end{proposition}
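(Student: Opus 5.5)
The plan is to work entirely with the rational realizations. By Definition~\ref{def: rational realization}, each point of $\mathcal M^\ast_{f,\Theta}(\mathcal G)\subseteq k^N$ specifies a unique distribution, and every distribution in the model arises this way. So I would first record that $\mathcal M_{f,\Theta}(\mathcal G)=\mathcal M_{f,\Theta}(\mathcal H)$ holds exactly when $\mathcal M^\ast_{f,\Theta}(\mathcal G)=\mathcal M^\ast_{f,\Theta}(\mathcal H)$ as subsets of $k^N$. Both realizations are taken in the same coordinates $z_1,\ldots,z_N$ because the family $\mathcal F_{\mathbb G,\Psi}$ fixes a common reparameterization, for example covariance entries or probability coordinates. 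After this reduction, both items become statements about two subsets of $k^N$.

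For item (1), I would argue by contrapositive. If the two realizations are equal as sets, then they have the same dimension, since dimension is an intrinsic invariant of the set; here it is the dimension of its Zariski closure, which for a dense model equals that of $\overline{\varphi(k^d\setminus W)}$ by Definition~\ref{def: dense family}. Hence realizations of different dimensions cannot be equal, and by the reduction above the models are not model equivalent.

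For item (2), the forward direction is immediate: equal sets have the same vanishing ideal by Definition~\ref{def: vanishing ideal}. For the converse, Lemma~\ref{zariski closure} shows that $I(\mathcal M^\ast_{f,\Theta}(\mathcal G))=I(\mathcal M^\ast_{f,\Theta}(\mathcal H))$ gives equal Zariski closures, $\overline{\mathcal M^\ast_{f,\Theta}(\mathcal G)}=\overline{\mathcal M^\ast_{f,\Theta}(\mathcal H)}=:V$. Because the family is dense, each realization has the same dimension as $V$. The remaining step is to pass from equality of closures back to equality of the models themselves.

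I expect this last step to be the main obstacle, because a vanishing ideal records only the equations and not the inequalities that cut a semialgebraic set out of its closure. My first approach would use the point of Remark~\ref{rem: vanishing ideals for distinguishing models}. Within the family, every realization is carved out of its closure by the same universal inequalities: positive definiteness of $\Sigma$, or positivity of the probabilities $p_x$. Provided the family imposes no model-specific inequalities, each realization equals $V$ intersected with this common semialgebraic region, up to a lower-dimensional exceptional subset. Hence the two realizations agree off a set of strictly lower dimension. In the sense intended by Definition~\ref{def: structural identifiability}, almost all distributions then lie in both models, which is the notion of model equivalence the strategy needs. I would state this hypothesis explicitly and note that, when model-specific inequalities are present, the extension of generically birational implicitization to inequality constraints in \cite{boegeSolus} is what is needed to close the gap.
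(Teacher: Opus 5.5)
Your item (1) and the forward direction of item (2) are fine. Item (1) is exactly the paper's one-line justification: a realization of larger dimension contains points, hence distributions, outside the smaller one. One small addition to your opening reduction: the implication $\mathcal M(\mathcal G)=\mathcal M(\mathcal H)\Rightarrow\mathcal M^\ast(\mathcal G)=\mathcal M^\ast(\mathcal H)$ needs the coordinates $z$ to be a function of the distribution. This holds for covariance or probability coordinates, but it is not literally what Definition~\ref{def: rational realization} says.

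The genuine gap is the converse of (2), and your last paragraph does not prove it. You conclude only that the two realizations agree off a lower-dimensional set, and then reinterpret ``model equivalent'' as ``almost all distributions lie in both models.'' Model equivalence is exact equality $\mathcal M=\mathcal M'$ (Definition~\ref{def: model equivalence}); the ``almost all'' of Definition~\ref{def: structural identifiability} is a different notion. Moreover, your key assertion is not derived from anything. The claim that each realization equals $V$ intersected with a common region, up to a lower-dimensional set, is just a restatement of ``no model-specific inequalities.'' It can also fail on a full-dimensional set. In a family of linear Gaussian DAG models with latent nodes (realized by the observed covariance matrix), consider the one-factor model $h\to 1,2,3$ with $h$ latent, and the complete DAG on $\{1,2,3\}$. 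Both are $6$-dimensional with vanishing ideal $\langle 0\rangle$. Yet the one-factor model satisfies $\sigma_{12}\sigma_{13}\sigma_{23}\ge 0$, so it misses an open subset of $\mathrm{PD}_3$. The converse therefore cannot be proved in the stated generality. The paper gives no argument for it beyond the caveat in Remark~\ref{rem: vanishing ideals for distinguishing models}.

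The repair is to make the hypothesis precise and keep the conclusion exact. Assume $\mathcal M^\ast_{f,\Theta}(\mathcal G)=\overline{\mathcal M^\ast_{f,\Theta}(\mathcal G)}\cap R$ for a set $R$ common to the whole family, e.g.\ $R=\mathrm{PD}_m$. Equal ideals give equal closures $V$ by Lemma~\ref{zariski closure}. Hence $\mathcal M^\ast_{f,\Theta}(\mathcal G)=V\cap R=\mathcal M^\ast_{f,\Theta}(\mathcal H)$ exactly, with no dimension hypothesis and no exceptional set.

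For colored Gaussian DAG models, in particular BPEC models, this hypothesis actually holds. The identifying formulas \eqref{eqn: Gaussian DAG identification} have principal minors as denominators, which never vanish on $\mathrm{PD}_m$; also $\omega_i>0$ is automatic there. So the realization is precisely the set of positive definite $\Sigma$ satisfying the local Markov minors and the color equations with denominators cleared. These polynomials vanish on the model, so they lie in $I(\mathcal M^\ast)$, which gives $\mathcal M^\ast\subseteq\overline{\mathcal M^\ast}\cap\mathrm{PD}_m\subseteq\mathcal M^\ast$.

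Finally, Strategy~1 only uses the contrapositive of the forward direction (different ideals $\Rightarrow$ different models), which you have already proved. There was no need to weaken model equivalence to serve the strategy.
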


Proposition~\ref{prop: algebraic model equvialence}~(1) is true since the model with the larger dimension necessarily contains distributions not belonging to the lower-dimensional model. 
In many instances, it is not too difficult to compute the dimension of a parametric graphical model. 
This means the main tool for model distinguishability is Proposition~\ref{prop: algebraic model equvialence}~(2), 
which reduces the model distinguishability task to identifying a polynomial $g\in I(\mathcal M^\ast_{f, \Theta}(\mathcal G))$ such that $g\notin I(\mathcal M^\ast_{f, \Theta}(\mathcal H))$ for any two graphs $\mathcal G, \mathcal H \in \mathbb G$. 
One strategy, based on Proposition~\ref{prop: algebraic model equvialence}, is the following: 

\begin{trailer}{Strategy 1: Algebraically proving structural identifiability}
Let $\mathcal F_{\mathbb G, \Psi}$ be a dense family of parametric graphical models. 
    \begin{enumerate}
        \item[0.] Determine the dimension of $\mathcal M^\ast_{f, \Theta}(\mathcal G)$ for all $\mathcal G \in \mathbb G$. 
        \item[1.] Find a polynomial $g\in I(\mathcal M^\ast_{f, \Theta}(\mathcal G))$ for every $\mathcal G\in \mathbb G$ that could distinguish the model in the sense that $g$ might not belong to $I(\mathcal M^\ast_{f, \Theta}(\mathcal H))$ for $\mathcal H\neq \mathcal G$. 
        \item[2.] Use the polynomials from the previous step to argue that a variable $z_i$ is used by some polynomial in every basis for $I(\mathcal M^\ast_{f, \Theta}(\mathcal G))$ but no polynomial in any basis for $I(\mathcal M^\ast_{f, \Theta}(\mathcal H))$ uses $z_i$ when $\mathcal H \neq \mathcal G$. 
    \end{enumerate}
\end{trailer}

The logic of the approach outlined here is that the two ideals 
\[
I(\mathcal M^\ast_{f, \Theta}(\mathcal G)), I(\mathcal M^\ast_{f, \Theta}(\mathcal H))\subseteq k[z_1,\ldots, z_N]
\]
cannot be equal if every basis of one ideal requires a variable $z_i$ that does not appear in any basis for the other ideal.  
This makes sense, since if the two ideals were equal any basis for one would also be a basis for the other. 

In order to prove something about the polynomials belonging to the bases of these ideals, we need to know some polynomials that belong to the ideals. 
So, step~(1) is finding something for us to work with in step~(2). 
Finding these polynomials amounts to developing techniques for extracting polynomials from a vanishing ideal.

The development of such techniques is an active area of research in the fields of computational algebra and applied algebra, which we can generally refer to as \emph{the polynomial constraint extraction program}. 

\begin{trailer}{The polynomial constraint extraction program}
    Let $I\subseteq k[z_1,\ldots, z_N]$ be an ideal. Develop techniques for identifying families $F =\{f_1,\ldots, f_s\}$ of polynomials belonging to $I$. This includes identifying $F \subseteq I$ such that 
    \begin{enumerate}
        \item $I = \langle F\rangle$ \hfill (Theorem~\ref{thm: rational implicitization})
        \item $I = \langle F\rangle : S$ \hfill (Theorem~\ref{thm: birational MP})
        \item $F$ is a vector space basis for the polynomials in $I$ of a fixed degree. 
        \item $F\subseteq I$, but $F\not\subseteq J$ for another ideal $J$ of interest. 
    \end{enumerate}
\end{trailer}

In these notes, we discussed techniques for (1) and (2), which we called rational implicitization and generically birational implicitization, respectively. 
In general, many recent research activities in computational algebra, applied algebra, and algebraic statistics can be viewed as solving various instances of the polynomial constraint extraction program, including techniques for (3) and (4) not discussed here (e.g., \cite{cummings, GKSolus}).

%---Algebraic Implicitization in Action
\subsection{New structural identifiability results via algebraic implicitization}
\label{sec: example}
We end our discussions by showing how the polynomial constraint extraction program ties into the graphical models program.  
We will use the technique of generically birational implicitization (Theorem~\ref{thm: birational MP}) on a family of globally rationally identifiable graphical models $\mathcal F_{\mathbb G, \Psi}$ in order to apply Strategy~1 and solve the model distinguishability problem for $\mathcal F_{\mathbb G, \Psi}$. 

To highlight the value of the algebraic perspective, we do this for a family $\mathcal F_{\mathbb G, \Psi}$ for which there is no known probabilistic solution to the model distinguishability problem.  
Our example is the family of BPEC DAG models $\mathcal F_{\mathbb{BPEC}}$ defined in Definition~\ref{def: BPEC DAG}. 
The following theorem first appeared in \cite{boege}.

\begin{theorem}
    \label{thm: BPEC structural identifiability}
    The parametric graphical model family $\mathcal F_{\mathbb{BPEC}}$ is structurally identifiable. 
\end{theorem}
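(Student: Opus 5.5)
Following Strategy~1, I would show that for distinct BPEC DAGs $(\mathcal G,c)\neq(\mathcal H,c')$ the models $\mathcal M_{\textrm{Gauss}}(\mathcal G,c)$ and $\mathcal M_{\textrm{Gauss}}(\mathcal H,c')$ differ. It suffices to show that the vanishing ideals of their rational realizations differ whenever the dimensions agree. Step~0 is easy. The family is dense, since $\Theta$ in~\eqref{eqn: BPEC params} is an open subset of a linear space. It is globally rationally identifiable by~\eqref{eqn: Gaussian DAG identification}, so $\dim \mathcal M^\ast_{\textrm{Gauss}}(\mathcal G,c)$ equals the number of edge colors plus $m$. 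If the dimensions differ, Proposition~\ref{prop: algebraic model equvialence}~(1) finishes the argument. So assume the dimensions are equal.

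For Step~1, I would use generically birational implicitization (Theorem~\ref{thm: birational MP}) and the Markov property for colored Gaussian DAG models stated above. The ideal $I(\mathcal M^\ast_{\textrm{Gauss}}(\mathcal G,c))$ contains:
\begin{enumerate}
\item[(a)] the minors $|\Sigma_{\{i\}\cup\pa(j),\{j\}\cup\pa(j)}|$ for each non-edge $i\to j$;
\item[(b)] for each pair of equally colored edges $i\to j$ and $k\to j$ (by the BPEC condition they share the head $j$), the polynomial
\[
|\Sigma_{\pa(j),\{j\}\cup\pa(j)\setminus\{i\}}|-|\Sigma_{\pa(j),\{j\}\cup\pa(j)\setminus\{k\}}|,
\]
after cancelling the common factor $|\Sigma_{\pa(j),\pa(j)}|$. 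This expresses the equality of regression coefficients $\lambda_{ij}=\lambda_{kj}$.
\end{enumerate}
The BPEC conditions are what make (b) useful. Every edge lies in a color class of size at least two, so every edge $i\to j$ produces at least one type-(b) polynomial. That polynomial involves the entries $\sigma_{i\ell}$ and $\sigma_{k\ell}$ linearly, through the cofactor expansion of the regression of $j$ on $\pa(j)$. Vertex colors are all distinct, so no type-(c) relations appear, and the nontrivial generators are local to each family $\{j\}\cup\pa(j)$.

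For Step~2, I would induct along a topological order, peeling off sinks. Take a vertex $j$ that is a sink in $\mathcal G$. If $\mathcal M(\mathcal G,c)=\mathcal M(\mathcal H,c')$, the two ideals coincide, so both contain the type-(b) polynomials of $\mathcal G$ at $j$. I would then argue that these polynomials cannot lie in $I(\mathcal M^\ast_{\textrm{Gauss}}(\mathcal H,c'))$ unless $j$ is also a sink of $\mathcal H$ with $\pa_{\mathcal H}(j)=\pa_{\mathcal G}(j)$ and the same coloring on its incoming edges.

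The concrete tool is to evaluate at generic points of $\mathcal M^\ast_{\textrm{Gauss}}(\mathcal H,c')$ via the trek rule (Proposition~\ref{prop: trek rule}). A type-(b) polynomial says that the coefficients of $i$ and $k$ in the population regression of $X_j$ on $X_{\pa_{\mathcal G}(j)}$ are equal. If $j$ is not a sink of $\mathcal H$, or has different parents there, one can choose $\mathcal H$-parameters that make these two regression coefficients unequal. For instance, when $j\to i$ in $\mathcal H$, the coefficient picks up variance-dependent terms $\omega_j\lambda_{ji}/\omega_i$-type expressions. Because vertex variances are free in BPEC models, this breaks the equality. The polynomial is then not in the ideal of $\mathcal H$, and the models differ.

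Once $j$ and its colored parent set are shown to match, I would marginalize out $j$. Marginalizing a sink gives the BPEC model on the induced colored subgraph, and the ideal restricts accordingly. Induction on $m$ then gives $(\mathcal G,c)=(\mathcal H,c')$. Proposition~\ref{prop: algebraic model equvialence}~(2) converts the resulting ideal inequality into generic non-membership, which is structural identifiability.

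The main obstacle is this Step~2 non-membership argument. One must show that a type-(b) polynomial of $\mathcal G$ fails on a generic point of every other BPEC model $\mathcal H$, uniformly over all ways $\mathcal H$ can differ near $j$: reversed edges, extra parents, or recolored edges. I would first handle the reversal case by an explicit trek-rule computation on the family of $j$. I would then reduce the general case to it by choosing the edge of $\mathcal H$ incident to $j$ that is first in a topological order of $\mathcal H$, so that only a few treks contribute to the leading terms.
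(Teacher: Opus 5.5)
There is a genuine gap in Step~2. You claim that the type-(b) polynomials of $\mathcal G$ at a sink $j$ cannot lie in $I(\mathcal M^\ast_{\textrm{Gauss}}(\mathcal H,c'))$ unless $j$ is a sink of $\mathcal H$ with the same colored parent set. That claim is false.

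Suppose $j$ is a sink of $\mathcal H$ and $\pa_{\mathcal H}(j)$ is obtained from $\pa_{\mathcal G}(j)$ by deleting whole $c$-color classes, with $c'$ agreeing with $c$ on the surviving edges. Then the local Markov property of $\mathcal H$ makes the regression coefficients of $X_j$ on $X_{\pa_{\mathcal G}(j)}$ equal to $0$ on the deleted parents, and constant on the surviving classes. So every type-(b) polynomial of $\mathcal G$ at $j$ vanishes on $\mathcal M^\ast_{\textrm{Gauss}}(\mathcal H,c')$. The same happens if $c'$ is coarser than $c$ at $j$.

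Equal dimension does not rule this out. Take $\mathcal G$ with $i\to j\leftarrow k$ red, $p\to j\leftarrow q$ blue and $r$ isolated. Take $\mathcal H$ with $i\to j\leftarrow k$ red and $p\to r\leftarrow q$ blue. Both are BPEC of dimension $m+2$ and $j$ is a sink of both, yet all of $\mathcal G$'s type-(b) polynomials at $j$ lie in the ideal of $\mathcal H$. The models differ only through polynomials such as $\sigma_{pj}$, which lie in the ideal of $\mathcal H$ but not of $\mathcal G$. So a one-sided test with $\mathcal G$'s polynomials at $j$ cannot pin down $\pa_{\mathcal H}(j)$.

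The missing ingredient is the paper's first step. By Lemma~\ref{lem: BPEC faithful}, every BPEC model contains distributions faithful to its DAG, so equal models force Markov equivalent DAGs (Corollary~\ref{cor: VP}). After this reduction, at a sink $j$ of $\mathcal G$ you have $\nbd_{\mathcal H}(j)=\pa_{\mathcal G}(j)$. The only discrepancies left are then:
\begin{itemize}
\item reversed edges at $j$;
\item if $j$ is a sink of both graphs, a different coloring, which you detect by using the type-(b) polynomials of both graphs (under the equality assumption both ideals contain both sets).
\end{itemize}

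Even after the reduction, the reversal case, which you rightly call the main obstacle, is only sketched. The paper argues in the opposite direction. Take $j\to\ell$ in $\mathcal H$ and its BPEC partner edge $i\to\ell$. Then $h_{i\ell,j\ell}\in I(\mathcal M^\ast_{\textrm{Gauss}}(\mathcal H,c'))$ has the form $\sigma_{jj}\,|\Sigma_{\pa(\ell)\setminus\{j\},\{\ell\}\cup\pa(\ell)\setminus\{i,j\}}|+F$, with $F$ free of $\sigma_{jj}$. The coefficient of $\sigma_{jj}$ encodes a CI statement that faithfulness excludes from the ideal. Hence every generating set of $I(\mathcal M^\ast_{\textrm{Gauss}}(\mathcal H,c'))$ involves $\sigma_{jj}$. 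In contrast, $I(\mathcal M^\ast_{\textrm{Gauss}}(\mathcal G,c))$ has a generating set avoiding $\sigma_{jj}$, because $j$ is a sink of $\mathcal G$ with its own free variance. No generic-point evaluation is needed.

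Your direction can also be completed, but again only after the Markov-equivalence reduction. Then $X_j$ is independent of the remaining variables given $X_{\nbd(j)}$ on $\mathcal M_{\textrm{Gauss}}(\mathcal H,c')$ as well. Let $\ell$ be a child of $j$ in $\mathcal H$, and let $b\to j$ be an edge of $\mathcal G$ colored like $\ell\to j$. The corresponding type-(b) polynomial vanishes on $\mathcal M^\ast_{\textrm{Gauss}}(\mathcal H,c')$ if and only if $K_{j\ell}=K_{jb}$ there, where $K=\Sigma^{-1}=(1-\Lambda)\Omega^{-1}(1-\Lambda)^T$. The coefficient of $\omega_\ell^{-1}$ in $K_{j\ell}-K_{jb}$ is $-\lambda_{j\ell}(1+\lambda_{b\ell})$. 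This is not identically zero, since vertex variances are free in BPEC models, so the polynomial does not vanish.

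With these two repairs, your sink-peeling induction and the marginalization of a common sink go through.
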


\paragraph{\emph{Step~(0) of Strategy~1:}}
Step~(0) is to deduce the dimension of $\mathcal M^\ast_{\textrm{Gauss}}(\mathcal G, c)$ for an arbitrary BPEC DAG $\mathcal G$. 
Note that the parameter space $\Theta$ for the model (see~\eqref{eqn: BPEC params}) is clearly $(c_V + c_E)$-dimensional, where $c_V$ and $c_E$ are, respectively, the number of colors used by $c$ to color the vertices and edges of $\mathcal G$.  
Moreover, the transformation $\varphi_\mathcal G(\lambda, \omega) = \Sigma$ is a diffeomorphism with a diffeomorphic inverse, from which it follows that $\mathcal M^\ast_{\textrm{Gauss}}(\mathcal G, c) = \varphi_\mathcal G(\Theta)$ is also $(c_V + c_E)$-dimensional.  
This can be seen via the differentiability of the Cholesky decomposition of a positive definite matrix.  
A detailed proof is in \cite[Theorem 7]{boege}. 

\begin{lemma}
    \label{lem: BPEC dimension}
    Let $(\mathcal G,c)$ be a BPEC DAG that is colored using $c_V$ colors for the vertices and $c_E$ colors for the edges. Then $\mathcal M^\ast_{\textrm{Gauss}}(\mathcal G, c)$ has dimension $c_V + c_E$. 
\end{lemma}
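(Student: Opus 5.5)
The plan is to show that the restricted parameterization $\varphi_\mathcal G|_\Theta$ is a smooth embedding of the $(c_V+c_E)$-dimensional parameter space. Its image then has the same dimension as $\Theta$.

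First, I will identify $\Theta$ from~\eqref{eqn: BPEC params} with an open subset of $\mathbb R^{c_E}\times\mathbb R^{c_V}_{>0}$. For a BPEC DAG every vertex has its own color, so $c_V=m$. The edge constraints $\lambda_{ij}=\lambda_{k\ell}$ for same-colored edges cut $\mathbb R^E$ down to a linear subspace. Choosing one coordinate per edge color gives a linear isomorphism $L:\mathbb R^{c_E}\to\{\lambda\in\mathbb R^E : \lambda \text{ constant on color classes}\}$. Hence $\Theta\cong \mathbb R^{c_E}\times\mathbb R^m_{>0}$ is an open subset of a $(c_E+c_V)$-dimensional real vector space, i.e.\ a smooth manifold of that dimension.

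Second, I will show that $\varphi_\mathcal G$ restricted to $\Theta$ is injective and has a smooth inverse on its image. Write $\Sigma=(1-\Lambda)^{-T}\Omega(1-\Lambda)^{-1}$ as in~\eqref{eqn: covariance matrix}. After permuting the coordinates by a topological order of $\mathcal G$, the matrix $1-\Lambda$ is unit triangular. This expression is therefore exactly the (modified) $LDL^T$-Cholesky decomposition of the positive definite matrix $\Sigma^{-1}=(1-\Lambda)\Omega^{-1}(1-\Lambda)^T$. That decomposition is unique, so $(\Lambda,\Omega)$ is determined by $\Sigma$. Concretely, the rational formulas~\eqref{eqn: Gaussian DAG identification} recover $\omega_i$ and $\lambda_{ki}$ from $\Sigma$. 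Their denominators $|\Sigma_{\pa(i),\pa(i)}|$ are principal minors of a positive definite matrix and so never vanish. Hence $\psi$ from~\eqref{eqn: Gaussian DAG identification} is a smooth (indeed rational) map on the open cone $\textrm{PD}_m$ with $\psi\circ\varphi_\mathcal G=\mathrm{id}$ on all of $\mathbb R^E\times\mathbb R^m_{>0}$, and in particular on $\Theta$. The map $\varphi_\mathcal G$ itself is polynomial in $(\lambda,\omega)$ by the trek rule (Proposition~\ref{prop: trek rule}).

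Third, I will conclude. The map $\varphi_\mathcal G\circ (L\times\mathrm{id})$ is a smooth injective map from an open set of $\mathbb R^{c_E+c_V}$ with a smooth left inverse $(L^{-1}\circ\pi)\times\mathrm{id}\circ\psi$, where $\pi$ is the coordinate projection onto one representative per color. The chain rule applied to $\psi\circ\varphi=\mathrm{id}$ shows the differential of $\varphi_\mathcal G\circ(L\times\mathrm{id})$ is injective everywhere. The map is also a homeomorphism onto its image, since the left inverse is continuous. So it is an embedding and $\mathcal M^\ast_{\textrm{Gauss}}(\mathcal G,c)$ is a $(c_V+c_E)$-dimensional submanifold of $\textrm{Sym}(\mathbb R^m)$. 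For the algebraic notion of dimension (that of its Zariski closure), note that the image of an irreducible set under a generically injective polynomial map with rational inverse has Zariski closure of the same dimension. Equivalently, the Jacobian rank computed above equals the transcendence degree of the coordinate functions.

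The main obstacle is the second step: confirming that the Cholesky-type identification~\eqref{eqn: Gaussian DAG identification} is valid and smooth on all of $\Theta$, not merely generically. This needs positive definiteness of $\Sigma$ to rule out vanishing minors. It also needs care to reconcile the smooth-manifold dimension with the algebraic dimension used in Definition~\ref{def: dense family}.
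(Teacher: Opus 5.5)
Your proposal is correct and follows the paper's route: $\Theta\cong\mathbb R^{c_E}\times\mathbb R^{m}_{>0}$ (with $c_V=m$) is $(c_V+c_E)$-dimensional, and $\varphi_\mathcal G$ has a smooth Cholesky-type left inverse on $\textrm{PD}_m$, namely the rational formulas~\eqref{eqn: Gaussian DAG identification}, so the image has the same dimension. Your write-up spells out the embedding and Zariski-closure details that the paper defers to \cite[Theorem 7]{boege}; the only slip is notational, since the left inverse should read $(\pi\times\mathrm{id})\circ\psi$, because $\pi$ already lands in $\mathbb R^{c_E}$ and agrees with $L^{-1}$ on the color-constant subspace.
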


By Proposition~\ref{prop: algebraic model equvialence}~(1), we know that two models of different dimension cannot be model equivalent. 
Hence, Lemma~\ref{lem: BPEC dimension} implies that we only need to show that $\mathcal M_{\textrm{Gauss}}(\mathcal G, c) \neq \mathcal M_{\textrm{Gauss}}(\mathcal H, c')$ when $(\mathcal G, c)$ and $(\mathcal H, c')$ use the same number of colors. 

To prove two models of the same dimension are non-equal, it helps to know that each model contains distributions that exactly satisfy the dependence structure encoded by the graph. 
In other words, we will use the fact that every BPEC DAG model contains faithful distributions (see Definition~\ref{def: faithful}). 

\begin{lemma}
    \label{lem: BPEC faithful}
    Every $\mathcal M_{\textrm{Gauss}}(\mathcal G, c)\in \mathcal F_{\mathbb{BPEC}}$ contains distributions that are faithful to $\mathcal G$.
\end{lemma}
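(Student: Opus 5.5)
Faithfulness fails only when the covariance matrix satisfies a conditional independence relation that is not a d-separation in $\mathcal G$. The plan is to show that each such failure forces the parameters onto a proper algebraic subset of the parameter space $\Theta$ in~\eqref{eqn: BPEC params}. Since $\Theta$ is a nonempty open subset of $\mathbb R^{c_E}\times\mathbb R^{c_V}_{>0}$, it cannot be covered by finitely many proper subvarieties, so a faithful parameter point exists.

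Parameterize $\Theta$ by one free variable per edge color and one per vertex (all vertex colors are distinct). Let $\Sigma=\varphi_\mathcal G(\lambda,\omega)$ be the matrix given by the trek rule (Proposition~\ref{prop: trek rule}). By~\eqref{eqn: Gauss CI}, the relation $X_A\independent X_B\mid X_C$ holds for $\mathcal N(0,\Sigma)$ if and only if every minor $|\Sigma_{\{a\}\cup C,\{b\}\cup C}|$ with $a\in A$, $b\in B$ vanishes. It therefore suffices to handle pairwise statements $a,b,C$ with $a\not\perp_\mathcal G b\mid C$. For each such triple we must show that the polynomial $p_{a,b,C}(\lambda,\omega)=|\Sigma_{\{a\}\cup C,\{b\}\cup C}|$ is not identically zero on the colored parameter space. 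There are finitely many triples, so the union of the zero sets $V(p_{a,b,C})\cap\Theta$ is a finite union of proper algebraic subsets, and any point outside it yields a faithful distribution.

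The key step is nonvanishing under the color constraints. For uncolored Gaussian DAGs this is classical: one exhibits a parameter point, or a nonzero monomial via the trek separation / Gessel--Viennot expansion of the minor, at which a d-connecting path makes the minor nonzero. In the colored setting, equalities $\lambda_{ij}=\lambda_{kj}$ could in principle cancel terms. I would instead choose a specialization that respects the BPEC coloring. Fix a d-connecting path $\pi$ from $a$ to $b$ given $C$, chosen minimal, together with directed paths from its colliders into $C$. Set to zero every edge color that does not appear on this subgraph. Because same-colored edges in a BPEC DAG share a head, zeroing a color only deletes edges into a single node. Moreover, any surviving edge that shares a color with a used edge enters the same node as that used edge.

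The main obstacle is exactly these forced extra edges: each kept edge drags along its whole color class, which may reopen paths or create cancellations. To control this, I would work over the remaining free variables and argue via a leading-term argument. Order the color variables and show that the minor has, as a polynomial, a monomial coming from the chosen path system whose exponent pattern cannot be reproduced by any other trek system. The reason is that the BPEC condition means each color variable records edges entering one fixed node, so the multiset of heads of the edges used is determined by the monomial. If this fails, a fallback is to perturb generically: take all vertex variances equal and the color values as small distinct transcendentals, then use the $\lambda\to 0$ expansion, whose lowest-order term is the single minimal path system. Once $p_{a,b,C}\not\equiv 0$ on the colored space is established, the density argument above completes the proof.
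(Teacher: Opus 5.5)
Your outer framework is sound. Since $\mathcal M_{\textrm{Gauss}}(\mathcal G,c)\subseteq \mathcal M(\mathcal G)$, it suffices to show that for each of the finitely many triples with $a\not\perp_{\mathcal G} b\mid C$ the polynomial $|\Sigma_{\{a\}\cup C,\{b\}\cup C}|$ is not identically zero on the colored parameter space, and then pick a point off a finite union of proper hypersurfaces. But that nonvanishing is the whole content of the lemma: the notes state it without proof and stress that it fails for general colored DAGs. Your argument for it has a genuine gap.

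The leading-term claim rests only on ``the monomial determines the multiset of heads.'' That does not determine the trek or path system. A color variable records the head of an edge but not its tail, and in a BPEC DAG every color class has at least two tails. So systems that differ by swapping same-colored tails have identical monomials, and they can enter the determinant with opposite signs. A minimal instance: the BPEC DAG with $s_1,s_2\to v_1$ in color $q_1$ and $s_1,s_2\to v_2$ in color $q_2$ has $\sigma_{s_iv_k}=\omega_{s_i}\lambda_{q_k}$. Hence $|\Sigma_{\{s_1,s_2\},\{v_1,v_2\}}|=\omega_{s_1}\omega_{s_2}(\lambda_{q_1}\lambda_{q_2}-\lambda_{q_2}\lambda_{q_1})\equiv 0$. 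Without colors, the two vertex-disjoint systems $\{s_1\to v_1,s_2\to v_2\}$ and $\{s_2\to v_1,s_1\to v_2\}$ would make this minor generically nonzero. This is not a CI minor, but it is exactly the cancellation mechanism you must exclude for the minors $|\Sigma_{\{a\}\cup C,\{b\}\cup C}|$, and nothing in your sketch excludes it.

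The fallback fails for the same reason. Under $\lambda\mapsto\epsilon\lambda$, the lowest-order coefficient is a signed sum over all systems of minimal total length, and tail swaps preserve length; the two systems above both have length two and cancel. Taking ``distinct transcendental'' values only restates that the polynomial should be nonzero. Equalizing all $\omega_v$ even discards the independent variance variables that BPEC gives you.

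What is missing is an actual combinatorial argument that uses the BPEC structure. One clean reformulation goes as follows.
\begin{itemize}
\item Because all vertex colors are distinct, Cauchy--Binet gives $|\Sigma_{A,B}|=\sum_{S}(\prod_{s\in S}\omega_s)\,|B_{S,A}|\,|B_{S,B}|$ with $B=(I-\Lambda)^{-1}$. The squarefree $\omega$-monomials are pairwise distinct, so you need one $S$ for which both $|B_{S,\{a\}\cup C}|$ and $|B_{S,\{b\}\cup C}|$ are nonzero on the colored space.
\item Every color variable lives in a single column of $I-\Lambda$, namely the column of its head. Jacobi's complementary-minor identity plus multilinearity in columns then shows that $|B_{S,A}|\not\equiv 0$ iff you can choose, for each column, either the unit vector or the indicator vector of one color class's tail set so that the choices are linearly independent.
\end{itemize}
Your zeroing of unused colors does help here. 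Off-path vertices become sources, which can be removed by setting $\omega_u\to 0$, or by conditioning if they lie in $C$. What remains are the same-colored edges between vertices of the chosen path system. Showing that these never force all such choices to be dependent is precisely the step your proposal leaves undone.
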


\begin{remark}[Faithless graphical models]
    \label{rem: faithlessness}
    Lemma~\ref{lem: BPEC faithful} fails to hold for general colored DAG models. Examples of colored DAG models $\mathcal M_{\textrm{Gauss}}(\mathcal G, c)$ that do not contain distributions faithful to $\mathcal G$ appear in \cite[Example~4]{boege}. 
    It would be interesting to characterize the colored DAGs $(\mathcal G, c)$ whose models do not contain distributions faithful to $\mathcal G$. 
\end{remark}

Lemma~\ref{lem: BPEC faithful} implies that if two BPEC models are model equivalent then their DAGs are Markov equivalent; i.e.~if $\mathcal M_{\textrm{Gauss}}(\mathcal G, c) = \mathcal M_{\textrm{Gauss}}(\mathcal H, c')$ then $\mathcal G$ and $\mathcal H$ are Markov equivalent.  
By Corollary~\ref{cor: VP}, this implies that $\mathcal G$ and $\mathcal H$ must have the same skeleton and v-structures. 
In particular, we have reduced the problem of structural identifiability for $\mathcal F_{\mathbb{BPEC}}$ to showing that $\mathcal M_{\textrm{Gauss}}(\mathcal G, c) \neq \mathcal M_{\textrm{Gauss}}(\mathcal H, c')$ whenever $\mathcal G$ and $\mathcal H$ have the same skeleton and v-structures. 

\paragraph{\emph{Step~(1) of Strategy~1:}}
Step~(1) then asks for polynomials that could potentially help us distinguish the vanishing ideals of two models. 
Since BPEC models are colored DAG models, we can use the polynomials in our Markov property for colored DAGs (in Section~\ref{subsubsec: candidate polynomials}), which we obtained via our solution to the implicitization up to saturation problem in the polynomial constraint extraction program. 
Specifically, we will use the polynomials from item~(2) in our Markov property for colored DAGs:
\[
g_{ij,k\ell} = |\Sigma_{\pa(j),\{j\}\cup \pa(j)\setminus\{i\}}||\Sigma_{\pa(\ell),\pa(\ell)}| - |\Sigma_{\pa(\ell),\{\ell\}\cup \pa(\ell)\setminus\{k\}}||\Sigma_{\pa(j),\pa(j)}|
\]
for $i\to j$ and $k\to \ell$ the same color in $(\mathcal G, c)$. 

\paragraph{\emph{Step~(2) of Strategy~1:}}
Now that we have our candidate polynomials $g_{ij,k\ell}\in I(\mathcal M^\ast_{\textrm{Gauss}}(\mathcal G,c))$, we can execute step~(2) of Strategy~1. 
That is, we would like to use $g_{ij,k\ell}$ to show that \emph{every} basis for $I(\mathcal M^\ast_{\textrm{Gauss}}(\mathcal G,c))$ uses a certain variable $\sigma_{st}$, but \emph{no} basis for $I(\mathcal M^\ast_{\textrm{Gauss}}(\mathcal H,c'))$ contains polynomials using $\sigma_{st}$. 

To show this, we first make some simplifying assumptions, noting that \cite{boege} explains how the remaining cases follow from the one we consider here. 
In the following, suppose that $(\mathcal G, c),(\mathcal H, c')$ are a pair of BPEC DAGs for which
\begin{enumerate}
    \item $\mathcal G$ and $\mathcal H$ are Markov equivalent with the same set of vertices $[m]$,
    \item $\mathcal G$ and $\mathcal H$ are connected, 
    \item there exists $j\in [m]$ such that $j$ is a \emph{sink} in $\mathcal G$ (i.e., $j$ has no children), but $j$ is not a sink in $\mathcal H$. 
\end{enumerate}
Item~(3) implies that there exist $\ell\in[m]$ such that $\ell\to j$ is an edge in $\mathcal G$, but the edge is reversed in $\mathcal H$ (i.e. $\ell \leftarrow j$ is an edge in $\mathcal H$, but not $\ell\to j$). 
By the definition of a BPEC DAG (Definition~\ref{def: BPEC DAG}) there exists a second edge $\ell\leftarrow i$ in $\mathcal H$ that is the same color as $\ell\leftarrow j$.  
Hence, we consider our polynomial $g_{i\ell, j\ell}$ for this pair of edges
\begin{equation}
\begin{split}
g_{i\ell,j\ell} &= |\Sigma_{\pa(\ell),\{\ell\}\cup \pa(\ell)\setminus\{i\}}||\Sigma_{\pa(\ell),\pa(\ell)}| - |\Sigma_{\pa(\ell),\{\ell\}\cup \pa(\ell)\setminus\{j\}}||\Sigma_{\pa(\ell),\pa(\ell)}|,\\
&= |\Sigma_{\pa(\ell),\pa(\ell)}|\left(|\Sigma_{\pa(\ell),\{\ell\}\cup \pa(\ell)\setminus\{i\}}| - |\Sigma_{\pa(\ell),\{\ell\}\cup \pa(\ell)\setminus\{j\}}|\right).
\end{split}
\end{equation}
We know that $g_{i\ell,j\ell}\in I(\mathcal M_{\textrm{Gauss}}^\ast(\mathcal H, c'))$. 
We also know that the principle minor $|\Sigma_{\pa(\ell),\pa(\ell)}|\neq 0$ for all $\Sigma \in \mathcal M^\ast_{\textrm{Gauss}}(\mathcal H, c')$, since all matrices in this set are positive definite. 
This implies that 
\[
h_{i\ell, j\ell} = |\Sigma_{\pa(\ell),\{\ell\}\cup \pa(\ell)\setminus\{i\}}| - |\Sigma_{\pa(\ell),\{\ell\}\cup \pa(\ell)\setminus\{j\}}| \in I(\mathcal M_{\textrm{Gauss}}^\ast(\mathcal H, c')).
\]
Notice that the variable $\sigma_{jj}$ appears in the minor $|\Sigma_{\pa(\ell),\{\ell\}\cup \pa(\ell)\setminus\{i\}}|$, but not in $|\Sigma_{\pa(\ell),\{\ell\}\cup \pa(\ell)\setminus\{j\}}|$. 
Hence, taking the appropriate Laplace expansion to compute the minor $|\Sigma_{\pa(\ell),\{\ell\}\cup \pa(\ell)\setminus\{i\}}|$, we see that we can re-write $h_{i\ell, j\ell}$ as
\[
h_{i\ell, j\ell} = \sigma_{jj}|\Sigma_{\pa(\ell)\setminus\{j\}, \{\ell\}\cup\pa(\ell)\setminus\{i,j\}}| + F,
\]
where $F$ is a polynomial in the ring $\mathbb R[\sigma_{ij} : 1\leq i < j \leq m]$ (i.e., it does not use $\sigma_{jj}$).

Suppose now that there is a basis $\{g_1,\ldots, g_s\}$ for the ideal $I(\mathcal M_{\textrm{Gauss}}^\ast(\mathcal H, c'))\subseteq \mathbb R[\sigma_{ij} : 1 \leq i \leq j\leq m]$ such that $\{g_1,\ldots, g_s\}\subseteq \mathbb R[\sigma_{ij} : 1 \leq i < j\leq m]$.  
It follows that there exist polynomials $h_1,\ldots, h_s, h_1',\ldots, h_s'\in \mathbb R[\sigma_{ij} : 1 \leq i \leq j\leq m]$ such that 
\[
h_{i\ell, j\ell} = \sigma_{jj} \sum_{k=1}^sh_kf_k + \sum_{k=1}^sh_k'f_k. 
\]
Comparing the two expressions for $h_{i\ell, j\ell}$, it follows that $|\Sigma_{\pa(\ell)\setminus\{j\}, \{\ell\}\cup\pa(\ell)\setminus\{i,j\}}| = \sum_{k=1}^sh_kf_k$.  
Since $f_1,\ldots, f_s$ is a basis for $I(\mathcal M^\ast_{\textrm{Gauss}}(\mathcal H, c'))$, this means that
\[
|\Sigma_{\pa(\ell)\setminus\{j\}, \{\ell\}\cup\pa(\ell)\setminus\{i,j\}}|  \in I(\mathcal M^\ast_{\textrm{Gauss}}(\mathcal H, c')).
\]
Statistically, this means that $X_\ell \independent X_i \mid X_{\pa(\ell)\setminus\{i,j\}}$ for all distributions $\mathcal N(0,\Sigma) \in \mathcal M_{\textrm{Gauss}}(\mathcal H, c')$. 
However, by Lemma~\ref{lem: BPEC faithful}, the model $\mathcal M_{\textrm{Gauss}}(\mathcal H, c')$ contains distributions that are faithful to $\mathcal H$. 
Since there is an edge $i\to \ell$ in $\mathcal H$, the existence of these faithful distribution implies that the CI relation $X_\ell \independent X_i \mid X_{\pa(\ell)\setminus\{i,j\}}$ cannot hold for all distributions in $\mathcal M_{\textrm{Gauss}}(\mathcal H, c')$.
Since we have arrived at this contradictory conclusion, we conclude that every basis of the ideal $I(\mathcal M^\ast_{\textrm{Gauss}}(\mathcal H, c'))$ must contain some polynomial using the variable $\sigma_{jj}$. 

On the other hand, it is a relatively straightforward exercise in the algebraic theory we have outlined in these notes to show that, since $j$ is a sink in $\mathcal G$, then no basis of the ideal $I(\mathcal M^\ast_{\textrm{Gauss}}(\mathcal G, c))$ contains a polynomial using the variable $\sigma_{jj}$. 
Hence, it follows that $I(\mathcal M^\ast_{\textrm{Gauss}}(\mathcal H, c'))\neq I(\mathcal M^\ast_{\textrm{Gauss}}(\mathcal G, c))$.
By Proposition~\ref{prop: algebraic model equvialence}, it follows that $\mathcal M_{\textrm{Gauss}}(\mathcal H, c')\neq \mathcal M_{\textrm{Gauss}}(\mathcal G, c)$, which proves Theorem~\ref{thm: BPEC structural identifiability}.

\begin{remark}
\label{rem: edge colorings}
    Presently, the only known proof of Theorem~\ref{thm: BPEC structural identifiability} uses the algebraic implicitization techniques outlined here. 
    In \cite{boege}, the authors applied the same techniques to prove that the family of colored Gaussian DAG models for colored DAGs in which all edges are the same color and all vertices are colored differently is also structurally identifiable. 
    Hence, algebraic implicitization techniques allow us to add these two families to the short list of structurally identifiable DAG model families listed in Section~\ref{subsubsec: structural identifiability}.  
    Note that three of the four families on this (expanded) list are families of colored Gaussian DAG models, since linear Gaussian DAG models with equal error variances are colored Gaussian DAG models for which the DAG is colored so that all vertices have the same color and all edges are colored differently. 
\end{remark}

\begin{remark}
\label{rem: vertex colorings}
    Another example that uses algebraic implicitization techniques to solve the model distinguishability question appeared recently in \cite{WD23}.  
    Here, the authors classified model equivalence classes for colored DAG models where the all edges in the DAG are colored differently, but no restrictions are placed on the vertex colors. 
    Although not stated directly, their method uses the concept of implicitization up to saturation.   
\end{remark}

Based on Remarks~\ref{rem: edge colorings} and~\ref{rem: vertex colorings}, it would be interesting to know if algebraic implicitization techniques offer a solution to the model distinguishability problem for the class of all colored Gaussian DAG models $\mathcal F_{\textrm{c-}\mathbb{DAG}}^{\textrm{Gauss}}$.

\begin{problem}
    \label{prod: colored characterization}
    Solve the model distinguishability problem for the family of colored Gaussian DAG models $\mathcal F_{\textrm{c-}\mathbb{DAG}}^{\textrm{Gauss}}$. 
    That is, characterize when two colored DAGs $(\mathcal G, c)$ and $(\mathcal H, c')$ satisfy $\mathcal M_{\textrm{Gauss}}(\mathcal G, c) = \mathcal M_{\textrm{Gauss}}(\mathcal H, c)$.
\end{problem}

The same problem could also be posed for other parametric families of graphical models, including mixed graph families.  
For instance, the technique of generically birational implicitization also applies to parametric families of directed ancestral graphical models (see the appendix of \cite{boege}), as well as the relatively new family of graphical Lyapunov models \cite{dlyapunov}. 
    
As a final note, we remark that algebraic implicitization techniques have also made recent appearances in the study of other families of graphical models \cite{homoloidal, faithlessness, dlyapunov}, including hypothesis tests for model selection \cite{barnhill, strieder}.

\end{document}